\theoremstyle{plain}
\newtheorem{thm}{Theorem}[section]
\newtheorem{lem}[thm]{Lemma}
\newtheorem{conj}[thm]{Conjecture}
\theoremstyle{definition}
\newtheorem{defn}[thm]{Definition}
\newtheorem{ques}[thm]{Question}
\theoremstyle{remark}
\newtheorem{rem}[thm]{Remark}
\newtheorem{expl}[thm]{Example}
\numberwithin{equation}{section}
\newcommand{\sL}{\mathcal{L}}
\newcommand{\sM}{\mathcal{M}}
\newcommand{\sN}{\mathcal{N}}
\newcommand{\sS}{\mathcal{S}}
\newcommand{\divi}{\textup{div}}
\newcommand{\BSO}{\textup{BSO}}
\newcommand{\BSTOP}{\textup{BSTOP}}
\newcommand{\LL}{\mathbb{L}}
\newcommand{\NN}{\mathbb{N}}
\newcommand{\RR}{\mathbb{R}}
\newcommand{\QQ}{\mathbb{Q}}
\renewcommand{\SS}{\mathbb{S}}
\newcommand{\ZZ}{\mathbb{Z}}
\newcommand{\id}{\textup{id}}
\newcommand{\im}{\textup{im}}
\newcommand{\Wh}{\textup{Wh}}
\newcommand{\ra}{\rightarrow}
\newcommand{\xra}{\xrightarrow}
\newcommand{\co}{\colon\!}
\newcommand{\OO}{\textup{O}}
\newcommand{\G}{\textup{G}}
\newcommand{\TOP}{\textup{TOP}}
\renewcommand{\int}{\textup{int}}
\newcommand{\pr}{\textup{pr}}
\newcommand{\colim}{\textup{colim}}
\newcommand{\asmb}{\textup{asmb}}
\newcommand{\BO}{\textup{BO}}
\newcommand\sAut{\mathrm{sAut}}
\newcommand\Diff{\mathrm{Diff}}
\newcommand\wt{\widetilde}
\newcommand\C{\mathbb{C}}
\newcommand\R{\mathbb{R}}
\newcommand\Q{\mathbb{Q}}
\newcommand\Z{\mathbb{Z}}
\newcommand{\an}[1]{\langle{#1}\rangle}
\newcommand\hAut{\mathrm{hAut}}
\newcommand\Lone{\mathbb{L}\an{1}}
\newcommand\coker{\mathrm{coker}}
\newcommand\del{\partial}
\title[]{On the cardinality of the manifold set}
\author{Diarmuid Crowley and Tibor Macko}
\date{\today}
\subjclass[2010]{Primary: 57R65, 57R67}
\keywords{manifold set, structure set, rigidity, surgery, divisibility}
\address{School of Mathematics \& Statistics,
The University of Melbourne,
Parkville, VIC, 3010,
Australia}
\email{dcrowley@unimelb.edu.au}
\address{
Mathematical Institute, Slovak Academy of Sciences, \v Stef\'anikova 49, Bratislava, SK-81473, Slovakia}
\email{macko@mat.savba.sk}
\thanks{The second author was supported by the project ``Topology of high-dimensional manifolds'' under the scheme ``Returns'' of the ``Ministry of education, science, research and sport of the Slovak republic'', by the grant VEGA 1/0101/17 and by the Slovak Research and Development Agency under the contract No. APVV-16-0053.}
\begin{document}

\maketitle

\begin{abstract}
We study the cardinality of the set of homeomorphism classes of manifolds homotopy equivalent to a given manifold $M$ and compare it to the cardinality of the structure set of $M$.
\end{abstract}

\section{Introduction} \label{sec:intro}
Let $M$ be a closed connected topological $n$-manifold.
We define its {\em manifold set} $\sM(M)$,
to be the set of homeomorphism classes of manifolds homotopy equivalent to $M$:
$$ \sM(M) := \{ N \, | \, N \simeq M \}/{\cong}$$
Here $\simeq$ denotes a homotopy equivalence and $\cong$ denotes a homeomorphism.
We also define the {\em simple manifold set} $\sM^s(M) \subset \sM(M)$ where
we require that $N$ is simple homotopy equivalent to $M$ and
the {\em $s/h$-manifold set}, $\sM^{s/h}(M)$, which is the quotient
of $\sM^s(M)$ where we replace homeomorphism by the relation of $h$-cobordism.
{\em Henceforth, for simplicity, we assume that $M$ is orientable.}

We have the inclusion $\sM^s(M) \hookrightarrow \sM(M)$ and the surjection $\sM^s(M) \to \sM^{s/h}(M)$,
both of which are bijections if the Whitehead group $\Wh(\pi_1(M))$ vanishes and $n \geq 5$.
In Section~\ref{sec:surgery-preliminaries} we give more details on various versions of the manifold set and relations between them.

The computation of the simple manifold set typically proceeds via surgery theory
where one first computes the {\em simple structure set} of $M$,
$$ \sS^{s}(M) := \{ f \colon N \to M \}/{\sim_{s}},$$
which is the set of simple homotopy equivalences from a closed topological $n$-manifold $N$ to $M$,
up to $s$-cobordism in the source (see Section \ref{sec:surgery-preliminaries} for more details).
The simple structure set of $M$ with $\pi = \pi_{1} (M)$ and $n \geq 5$ lies in the surgery exact sequence
\begin{equation} \label{ses:intro}
\cdots \xra{~~~} L^{s}_{n+1}(\ZZ\pi) \xra{~\omega~} \sS^{s}(M) \xra{~\eta~} \sN(M) \xra{~\theta~} L^{s}_n(\ZZ\pi),
\end{equation}
which we recall in Section~\ref{sec:surgery-preliminaries} and which is an exact sequence of abelian groups and homomorphisms.
%
%
%
%
Thanks to the $s$-cobordism theorem the simple structure set maps onto the simple manifold set by the forgetful map
$$ \sS^s(M) \to  \sM^s(M), \quad
[f \colon N \to M] \mapsto [N],$$
which descends to define a bijection
$$ \sS^{s}(M)/\sAut(M) \xra{~\equiv~} \sM^s(M),$$
where $\sAut(M)$, the group of homotopy classes of simple homotopy automorphisms of
$M$, acts on the structure set via post-composition,
$$ \sS^{s}(M) \times \sAut(M) \to \sS^{s}(M), \quad
([f \colon N \to M], [g]) \mapsto [g \circ f \colon N \to M].$$
There is also a similar $h$-decorated surgery exact sequence where
the structure set $\sS^h(M)$ consists of $h$-cobordism classes of homotopy equivalences
$f \colon N \to M$; again see Section \ref{sec:surgery-preliminaries} for details.
When $\Wh (\pi) = 0$ we simply write $\sS(M)$ in place of $\sS^s(M) = \sS^h(M)$.

There are well-developed tools for studying $\sS^{s}(M)$ which have lead to
the computation of $\sS^s(M)$ in many examples.
In this paper we consider the passage from $\sS^{s}(M)$ to $\sM^{s/h}(M)$.
This can be a difficult problem to solve since the group $\sAut(M)$ and
its action on $\sS^{s}(M)$ are in general complicated and the same holds for
the map $\sM^s(M) \to \sM^{s/h}(M)$.
To keep the discussion tractable we
shall focus on the relative cardinalities of $\sS^{s}(M)$ and $\sM^{s/h}(M)$.

For a set $S$, we let $|S|$ denote the cardinality of $S$ and pose the following question:
%

\begin{ques}
	When does $|\sS^{s}(M)| = \infty$ entail
	that $|\sM^{s/h}(M)| = \infty$?
\end{ques}

One way to show that $\sS^s(M)$ and $\sM^{s/h}(M)$ are both
infinite is to find an $h$-cobordism invariant of manifolds, $\gamma(M)$ say, for which
there are infinitely many simple structures $[f_i \colon N_i \to M]$ where
$\gamma(N_i) \neq \gamma(N_j)$ for $i \neq j$.
Important examples of such invariants $\gamma$
are the $\rho$-invariant and its generalisations.
For example, we have
the following mild strengthening of a very general theorem of Chang and Weinberger.
(For a discussion of other relevant work in the literature see Section \ref{ss:further}.)

\begin{thm}[C.f.~{\cite[Theorem 1]{Chang&Weinberger(2003)}}]\label{thm:CW}
Let $M$ have dimension $n = 4k{-}1 \geq 7$ and suppose $\pi = \pi_1(M)$ contains torsion.
Then for every $[f] = [f \colon N \to M] \in \sS^s(M)$, the image of
the orbit $L^s_{4k}(\ZZ\pi)[f] \subset \sS^s(M)$ in $\sM^{s/h}(M)$ is infinite.
\end{thm}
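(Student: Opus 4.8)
The plan is to use the $\rho$-invariant of Cheeger--Gromov (equivalently, the $L^2$-$\rho$-invariant, or for manifolds with torsion in $\pi_1$ the Atiyah--Patodi--Singer $\rho$-invariant associated to representations of $\pi$) as the $h$-cobordism invariant $\gamma$. The key structural input is that for $M$ of dimension $4k-1$ this invariant defines a function $\rho \colon \sS^s(M) \to \RR^{r}$ (or into some suitable receptacle group) which, after passing to a subgroup of finite index in $\Wh$-type data, is a homomorphism on the $L$-group part: more precisely, the composite $L^s_{4k}(\ZZ\pi) \xra{~\omega~} \sS^s(M) \xra{~\rho~} \RR^r$ factors through a homomorphism, because the $\rho$-invariant of the result of acting by $x \in L^s_{4k}(\ZZ\pi)$ on $[f]$ differs from $\rho([f])$ by a term depending only on $x$ (this is the classical behaviour of $\rho$ under the Wall realization action, going back to Petrie, Atiyah--Patodi--Singer, and Chang--Weinberger). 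So the first step is to recall this additivity and to record that the induced homomorphism $L^s_{4k}(\ZZ\pi) \to \RR^r$ has infinite image whenever $\pi$ contains torsion --- this is exactly the content of Chang--Weinberger's computation: torsion in $\pi$ forces infinitely many linearly independent values of the higher $\rho$-invariants on $L^s_{4k}(\ZZ\pi)$.

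Given that, the second step is the descent argument: I need to pass from ``$\rho$ takes infinitely many values on the orbit $L^s_{4k}(\ZZ\pi)[f] \subset \sS^s(M)$'' to ``the image of that orbit in $\sM^{s/h}(M)$ is infinite''. For this, recall that $\sM^{s/h}(M)$ is the quotient of $\sS^s(M)$ first by the action of $\sAut(M)$ and then by the $h$-cobordism relation in the source; but the Cheeger--Gromov $\rho$-invariant is precisely an invariant of the manifold $N$ (indeed of its $h$-cobordism class, since $\rho$ is an oriented $h$-cobordism invariant: it is a spectral/$L^2$ quantity unchanged under the bordisms produced by $h$-cobordisms over $M$), so $\rho$ descends to a well-defined function $\bar\rho \colon \sM^{s/h}(M) \to \RR^r$. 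Therefore if the orbit $L^s_{4k}(\ZZ\pi)[f]$ had finite image in $\sM^{s/h}(M)$, then $\rho$ would take only finitely many values on that orbit, contradicting Step~1. The only subtlety is that $\rho(N)$ a priori depends on $N$ together with the reference map to $B\pi$ (needed to pull back the flat bundles/representations); one handles this by noting that all $N_i$ in the orbit come with a preferred homotopy equivalence to $M$ hence a preferred map to $B\pi$, and an oriented $h$-cobordism over $M$ respects this data, so $\bar\rho$ is genuinely well-defined on $\sM^{s/h}(M)$; alternatively one invokes the $L^2$-$\rho$-invariant, which depends only on the homeomorphism (indeed $h$-cobordism) type of $N$ with no auxiliary choices at all, and whose variation along the $L$-group orbit is still infinite by Chang--Weinberger.

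The main obstacle --- and the place where the ``mild strengthening'' over the original Chang--Weinberger statement lives --- is uniformity in the basepoint $[f]$: one must know that the conclusion holds for \emph{every} $[f] \in \sS^s(M)$, not merely for the base point $[\id_M]$. This is handled by the additivity of Step~1: the function $x \mapsto \rho(\omega(x) \cdot [f])$ on $L^s_{4k}(\ZZ\pi)$ equals $x \mapsto \Phi(x) + \rho([f])$ for a fixed homomorphism $\Phi$ independent of $[f]$ (the shift $\rho([f])$ is a constant for the given orbit). Since $\Phi$ has infinite image, so does $x \mapsto \Phi(x) + \rho([f])$, and hence so does the orbit's image under $\bar\rho$ in $\sM^{s/h}(M)$, regardless of which $[f]$ we started with. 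I would also briefly check the dimension hypothesis $n = 4k-1 \geq 7$ is used exactly where needed: $n \geq 5$ for the surgery exact sequence \eqref{ses:intro} and the $s$-cobordism theorem, and $n \equiv 3 \pmod 4$ (so $n+1 \equiv 0 \pmod 4$) so that $L^s_{n+1}(\ZZ\pi) = L^s_{4k}(\ZZ\pi)$ is the surgery obstruction group carrying the multisignature/$\rho$ homomorphism with infinite image.
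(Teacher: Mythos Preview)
Your proposal is correct and follows the same approach the paper takes: the paper does not give a standalone proof but simply remarks that Chang--Weinberger's invariant (the $L^2$-$\rho$-invariant) is an $h$-cobordism invariant, so their Theorem~1 for $\sM^s(M)$ immediately upgrades to the statement for $\sM^{s/h}(M)$. Your sketch fleshes out exactly this argument---additivity of $\rho$ under Wall realisation, infinite image when $\pi$ has torsion, and descent to $\sM^{s/h}(M)$ via $h$-cobordism invariance.

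One small correction: you locate the ``mild strengthening'' in the uniformity over all basepoints $[f]$, but in fact that uniformity is already implicit in Chang--Weinberger (precisely because of the additivity you describe). The paper's remark makes clear that the actual strengthening is the passage from $\sM^s(M)$ to the coarser quotient $\sM^{s/h}(M)$, which is where $h$-cobordism invariance of $\rho$ is needed. Your argument covers this correctly; only the attribution of novelty is slightly off.
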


\begin{rem}
Theorem 1 of \cite{Chang&Weinberger(2003)}
is stated using $\sM^s(M)$ but the invariant
which they use
is an $h$-cobordism invariant and so Theorem \ref{thm:CW} follows immediately.
\end{rem}

For the manifolds appearing in Theorem \ref{thm:CW}
the action of $L^s_{n+1}(\Z\pi)$ on $\sS^s(M)$ has infinite orbits
which remain infinite when mapped to $\sM^{s/h}(M)$.
In contrast, our first two main results
concern the case when
the image of $\eta \colon \sS^{h}(M) \to \sN(M)$ is infinite.
In many situations we show that $|\eta(\sS^{h}(M))| = \infty$ implies
that $|\sM^{s/h}(M)| = \infty$.
The invariants $\gamma(N)$ we use are elementary and
are derived from the Hirzebruch $\sL$-classes of $N$ as follows.

Recall that the Hirzebruch $\sL$-polynomial $\sL_k$,
is a degree $k$ rational polynomial in the Pontryagin classes
and let $c_k$ be the least common multiple of the absolute values of the denominators of the coefficients in $\sL_k$.
For smooth manifolds $M_\alpha$ the $\sL$-class $\sL_k (M_\alpha)$ belongs to the lattice
$(1/c_k) \cdot FH^{4k} (M_\alpha;\ZZ) \subset H^{4k} (M_\alpha;\QQ)$,
where $FH^{i} (X;\ZZ) := H^{i} (X;\ZZ)/\textup{tors}$
denotes the free part of the cohomology of a space $X$ and
by a {\em lattice} we simply mean a finitely generated free abelian group.
For topological manifolds there are rational Pontryagin classes
$p_{k} (M) \in  H^{4k} (M;\QQ)$ and we have a rational equivalence of classifying spaces
$\BSO \ra \BSTOP$, see \cite[Annex 3, section 10]{Kirby-Siebenmann(1977)}, \cite{Novikov(1966)}.
For each $i > 0$ fix the smallest positive integer $t_{i} > 0$ such that for any $M$
\[p_i (M) \in (1/t_i) \cdot FH^{4i} (M;\ZZ)\]
and define $t_k$ to be the least common multiple of the $t_i$ with $i < k$.
We define $r_k :=  c_k \cdot t_k$ so that
%
\[\sL_k (M) \in (1/r_k) \cdot FH^{4k} (M;\ZZ) \subset H^{4k} (M;\QQ)\]
%
and for $f \co N \ra M$ representing an element in $\sS (M)$ we define
\[
\divi_{k} (f) := \divi_{k} (\sL_{k} (N)) \in \NN,
\]
where $\divi_{k}(\sL_{k}(N))$ is the smallest natural number $d$
such that $\sL_{k}(N) = d x$ for some class $x \in (1/r_k) \cdot FH^{4k} (M;\ZZ)$;
see Definition~\ref{defn:divisibility-in-lattice}.
In Section~\ref{sec:1-ctd-case} we verify that the sending a manifold structure $[f \colon N \to M]$ to $\divi_k(f)$ induces well-defined
functions $\divi_k \colon \sS(M) \to \NN$
and $\divi_k \colon \sM(M) \to \NN$.

\begin{thm} \label{thmA}
If $\pi_1(M) = \{e\}$ and $n \geq 5$ then
$\sM(M) = \sM^{s/h}(M)$ is infinite if and only if $\sS (M)$
is infinite. In such a case for some $0 < 4k < n$ the set $\divi_{k} (\sM (M))$ is infinite.
\end{thm}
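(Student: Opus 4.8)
The plan is to prove the single implication: \emph{if $\sS(M)$ is infinite then $\divi_k(\sM(M))$ is infinite for some $0<4k<n$.} Everything else in the statement is formal. Since $\Wh(\pi_1 M)=0$ and $n\ge 5$ all decorations agree, $\sM(M)=\sM^{s/h}(M)=\sS(M)/\sAut(M)$, and $\sS(M)\twoheadrightarrow\sM(M)$ is onto; so $\sM(M)$ infinite trivially forces $\sS(M)$ infinite. Moreover, by the construction in Section~\ref{sec:1-ctd-case} each $\divi_k$ descends along $\sS(M)\twoheadrightarrow\sM(M)$, whence $\divi_k(\sS(M))=\divi_k(\sM(M))$; so the displayed implication yields both the remaining direction of the equivalence and the final sentence of the theorem.

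First I would locate the source of infinitude inside the surgery sequence~\eqref{ses:intro}. The group $\sN(M)=[M,\G/\TOP]$ and the groups $L_*(\ZZ)$ are finitely generated, so it suffices to show that $\ker(\eta)=\im\bigl(\omega\colon L_{n+1}(\ZZ)\to\sS(M)\bigr)$ is finite; then $\sS(M)$ infinite forces $\im(\eta)=\ker\bigl(\theta\colon\sN(M)\to L_n(\ZZ)\bigr)$ to be an infinite, hence positive-rank, subgroup of $\sN(M)$. If $n\not\equiv 3\pmod 4$ the group $L_{n+1}(\ZZ)$ is already finite. If $n\equiv 3\pmod 4$ then $n\ge 7$, $n+1=4m\ge 8$, and $L_{4m}(\ZZ)\cong\ZZ$; here $\omega=0$, because by exactness $\ker(\omega)$ is the image of the boundary map $\sN(M\times I,\partial)\to L_{4m}(\ZZ)$, and forming the interior connected sum of $\id_{M\times I}$ with a closed-manifold degree-one normal map $\widehat{E}\to S^{4m}$ whose surgery obstruction generates $L_{4m}(\ZZ)\cong\pi_{4m}(\G/\TOP)$ produces a normal map of $M\times I$ rel boundary whose obstruction is, by additivity, that generator.

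Next I would rationalise. Since $\G/\TOP$ is rationally a product $\prod_{j\ge 1}K(\QQ,4j)$, we have $\sN(M)\otimes\QQ\cong\bigoplus_{0<4j\le n}H^{4j}(M;\QQ)$, and $\theta\otimes\QQ$ is either $0$ (when $n\not\equiv 0\pmod 4$) or a single linear functional whose component on the top summand $H^n(M;\QQ)$ is nonzero (the surgery obstruction of a normal map supported on the top cell of $M$ being its index). Hence $V:=\im(\eta)\otimes\QQ=\ker(\theta\otimes\QQ)$ is a nonzero subspace, and for any $k$ with $0<4k<n$ and $H^{4k}(M;\QQ)\ne 0$ —such $k$ exists, for otherwise $\sN(M)\otimes\QQ$ would be $0$ or $H^n(M;\QQ)$ and $\im\eta$ would be finite— the projection of $V$ onto $\bigoplus_{i\le k}H^{4i}(M;\QQ)$ is onto, since the only relation cutting out $V$ involves the degree-$n$ summand. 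By Sullivan's $\sL$-class formula, $\Xi_k(f):=(f^{-1})^*\sL_k(N)-\sL_k(M)$ depends only on and is additive in $\eta(f)$, and the induced homomorphism $\sN(M)\to\bigoplus_{0<4j\le n}H^{4j}(M;\QQ)$ becomes, rationally, an isomorphism that is unipotent for the degree filtration (its degree-$4k$ component picks out the normal-invariant component in degree $4k$ with coefficient $\sL_0(M)=1$). With the surjectivity just noted, $\im(\Xi_k)$ then spans $H^{4k}(M;\QQ)$; that is, $\im(\Xi_k)$ is a sublattice of finite index, say $d$, in $(1/r_k)FH^{4k}(M;\ZZ)=:\Lambda$.

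To conclude, write $\sL_k(M)=\ell\cdot u$ with $u\in\Lambda$ indivisible and $\ell=\divi_k(\sL_k(M))$ (if $\sL_k(M)=0$, take any indivisible $u$ and $\ell=0$). As $d\Lambda\subseteq\im(\Xi_k)$, for each $m\in\ZZ$ there is $[f_m\colon N_m\to M]\in\sS(M)$ with $(f_m^{-1})^*\sL_k(N_m)=\sL_k(M)+m\,d\,u=(\ell+md)u$, so $\divi_k(f_m)=|\ell+md|$; these values are infinite, giving $\divi_k(\sM(M))=\divi_k(\sS(M))$ infinite with $0<4k<n$. The step I expect to be the main obstacle is precisely this passage from ``$\im(\eta)$ infinite'' to ``$\divi_k$ unbounded for a single $k$'': an infinite subgroup of the lattice $\sN(M)$ can fail to have unbounded divisibility in every degree (as do the primitive vectors $(1,m)$ in $\ZZ^2$), and what saves the argument is that $\ker(\theta)$ has corank at most one in $\sN(M)$, with the defect confined to the top degree $n$ — which is exactly where the surgery-theoretic inputs, the vanishing of $\omega$ and the top-cell computation of $\theta\otimes\QQ$, enter.
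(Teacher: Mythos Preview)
Your proof is correct and follows the same overall strategy as the paper: identify $\sS(M)_\QQ$ with $\bigoplus_{0<4i<n}H^{4i}(M;\QQ)$ via the $\sL$-map, deduce that $\sL_k(\sS(M))$ (your $\im(\Xi_k)$) is a full sublattice of $(1/r_k)FH^{4k}(M;\ZZ)$ for each $k$ with $0<4k<n$, and then show the affine sublattice $\sL_k(M)+\sL_k(\sS(M))$ has infinitely many divisibilities.

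Two presentational differences are worth noting. First, the paper simply cites Ranicki for the fact that the simply-connected surgery sequence is short exact, whereas you argue it directly (case split on $n\bmod 4$, plumbing to kill $\omega$); your argument is self-contained but the paper's citation is cleaner, and your unipotence discussion is a detour---working directly in the $\sL$-identification, $\theta\otimes\QQ$ is literally projection to $H_0$, so no change-of-basis argument is needed. Second, for the final step the paper proves a general lemma (Lemma~\ref{lem:divi-vs-affine-lattice}): any full affine sublattice $l_0+L_0\subset L$ meets infinitely many divisibility classes, shown by finding, for each prime $p$ coprime to $[L:L_0]$, an element of $L_0$ congruent to $-l_0$ mod $p$. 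Your argument is instead to move along the single line $\ZZ\cdot u$ through $\sL_k(M)$, producing divisibilities $|\ell+md|$. Your version is more elementary and gives an explicit infinite family; the paper's lemma is more portable (and is reused in Section~\ref{sec:non-1-ctd-case}).
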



Since the normal invariant map $\eta$ is injective when $M$ is simply-connected,
Theorem \ref{thmA} implies the following statement:
$$ \text{\em If $\pi_1(M) = \{e\}$, $n \geq 5$ and
$|\eta(\sS(M))| = \infty$ then $|\sM^{s/h}(M)| = \infty$.}$$
Our next main result shows that the obvious modification of this statement continues to hold in many situations when
$\pi_1(M) \neq \{e\}$. The proof proceeds by extending the arguments using the divisibility of the Hirzebruch $\sL$-class
from the proof of Theorem \ref{thmA}.

\begin{thm} \label{thmB}
Suppose $n \geq 5$ and $|\eta(\sS^{h}(M))| = \infty$.
Then $|\sM^{s/h}(M)| = \infty$ if any of the following conditions hold:
\begin{enumerate}
\item $M$ is homotopy equivalent to a manifold $M'$ whose stable tangent bundle is trivial;
\item For some $0 < 4k <n$ there exist non-zero sublattices
\[
L \subset L' \subset (1/r_{k}) \cdot FH^{4k} (M; \ZZ)
\]
such that $L \subset \im(\eta)/\textup{tors}$, $L \subset L'$ is of finite index, and $\sL_{k} (M) \in L'$;
\item
The group $\pi = \pi_1(M)$ satisfies $n$-dimensional Poincar\'{e} duality
and for the classifying map $c \co M \ra B\pi$ of the universal cover of $M$ the induced map satisfies
$c_{\ast}([M]) \neq 0 \in H_n(B\pi; \Q)$ and
\[ c_{\ast} \co \bigoplus_{0 < 4k <n} H_{n-4k} (M;\ZZ) \ra \bigoplus_{0 < 4k <n} H_{n-4k} (B\pi;\ZZ) \]
has $|ker (c_{\ast})| = \infty$.
\end{enumerate}
\end{thm}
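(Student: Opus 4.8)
The plan is to treat the three cases by a common mechanism: produce infinitely many structures $[f_i \colon N_i \to M] \in \sS^h(M)$ whose images under $\eta$ are distinct, and then detect the $N_i$ inside $\sM^{s/h}(M)$ by showing that some $\divi_k(f_i)$ takes infinitely many values, exactly as in the proof of Theorem \ref{thmA}. The key point throughout is the formula, recalled in Section \ref{sec:1-ctd-case}, expressing the effect of a normal invariant $x \in \sN(M)$ on the Hirzebruch class: if $[f\colon N\to M]$ has normal invariant $\eta(f)$, then $f_*\sL_k(N) - \sL_k(M)$ is, up to the standard rational-characteristic-class bookkeeping, the image of $\eta(f)$ under an assembly-type map landing in $(1/r_k)\cdot FH^{4k}(M;\QQ)$; the divisibility $\divi_k(f)$ is then governed by this correction term. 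So the whole argument reduces to arranging that as $[f_i]$ ranges over an infinite family with $\eta(f_i)$ distinct, the corresponding correction terms have unbounded divisibility in the lattice $(1/r_k)\cdot FH^{4k}(M;\ZZ)$, and then invoking that $\divi_k$ descends to $\sM(M)$, hence to $\sM^{s/h}(M)$.

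For case (1), since $\sM^{s/h}(M)$ depends only on the homotopy type of $M$ (indeed on its simple homotopy type), I may replace $M$ by $M'$ with trivial stable tangent bundle. Then the Pontryagin classes of $M'$ vanish, so $\sL_k(M') = 0$ for $k>0$, and for any structure $f\colon N\to M'$ the class $f_*\sL_k(N)$ equals precisely the correction term coming from $\eta(f)$. Because $\eta$ has infinite image and the $\sL$-class correction is, after clearing denominators, a rational-linear injection on a finite-index subgroup modulo torsion (the $\sL$-genus detects normal invariants rationally away from the torsion that $\eta$ kills), at least one $H^{4k}$-component of the correction must be unbounded in divisibility along the family; this gives $|\divi_k(\sM^{s/h}(M'))| = \infty$, hence $|\sM^{s/h}(M)| = \infty$. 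Case (2) is the abstract engine behind case (1): the hypothesis directly postulates a finite-index sublattice $L \subset \im(\eta)/\textup{tors}$ inside $(1/r_k)\cdot FH^{4k}(M;\ZZ)$ with $\sL_k(M) \in L'$, so we may choose structures realising elements of $L$ with unbounded divisibility; since $\sL_k(M)\in L'$ and $[L':L]<\infty$, the divisibility of $\sL_k(N_i) = \sL_k(M) + (\text{correction in }L)$ is still unbounded, and $\divi_k$ separates infinitely many homeomorphism classes.

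Case (3) is the one I expect to be the main obstacle, because there the ambient lattice $FH^{4k}(M;\ZZ)$ need not contain a large piece of $\im(\eta)$ — the image of $\eta$ could be concentrated in the kernel of $c_*$ on homology, i.e. invisible to $B\pi$. The idea is to dualise: Poincaré duality $H^{4k}(M;\ZZ)\cong H_{n-4k}(M;\ZZ)$ converts the $\sL$-class into a homology class, cap it with $[M]$, and then the hypothesis $c_*[M]\neq 0 \in H_n(B\pi;\QQ)$ together with $|\ker(c_*)| = \infty$ on $\bigoplus H_{n-4k}(M;\ZZ)$ is designed to guarantee that structures with normal invariant supported on $\ker(c_*)$ have $\sL$-homology classes whose Poincaré duals in $FH^{4k}(M;\ZZ)$ have unbounded divisibility. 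Concretely I would: (i) use the factorisation of the assembly map through $B\pi$ to see that elements of $\im(\eta)$ lying over $\ker(c_*)$ are unobstructed in $L^h_n(\ZZ\pi)$ — i.e. $\theta$ vanishes on them, so they lift to $\sS^h(M)$ — this is where $|\eta(\sS^h(M))|=\infty$ must be fed in carefully, possibly after intersecting with the rational kernel; (ii) on such structures compute $f_*\sL_k(N)$ via the $L$-class correction and translate to homology; (iii) show via the nonvanishing of $c_*[M]$ that the resulting family of Poincaré-dual classes in $\bigoplus(1/r_k)FH^{4k}(M;\ZZ)$ cannot have bounded divisibility in every degree simultaneously, for otherwise the classes would lie in a fixed lattice, contradicting infinitude of $\ker(c_*)\cap\im(\eta)$. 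The delicate accounting is in step (i): ensuring the infinite subset of $\im(\eta)$ we use actually consists of normal invariants of genuine structures, which forces a diagram chase with the surgery sequence \eqref{ses:intro} and the assembly map $\sN(M)\to H_*(M;\LL)\to H_*(B\pi;\LL)$, rather than any single clean vanishing statement.
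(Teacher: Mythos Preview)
Your treatment of cases (1) and (2) is essentially the paper's: in (1) the vanishing of $\sL_k(M')$ makes the correction term itself a lattice element whose divisibility is intrinsic to $N$, and in (2) the hypotheses are precisely rigged so that $\sL_k(M)+L$ is a full affine sublattice of $L'$, whence Lemma~\ref{lem:divi-vs-affine-lattice} applies (divisibility in $L'$ bounding divisibility in the ambient $(1/r_k)\cdot FH^{4k}(M;\ZZ)$ from below). One small correction to step~(i) of case (3): the factorisation $\theta = \asmb_\pi \circ c_*$ already gives $\ker(c_*)\subset\ker(\theta)=\im(\eta)$ outright, so there is no ``delicate accounting'' needed to lift these normal invariants to structures, and the main hypothesis $|\eta(\sS^h(M))|=\infty$ plays no further role once $|\ker(c_*)|=\infty$ is assumed.

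Case (3), however, has a genuine gap in step~(iii). You assert that the divisibilities of $(f^{-1})^*\sL_k(N)=\sL_k(M)+(\text{correction})$ in $(1/r_k)\cdot FH^{4k}(M;\ZZ)$ must be unbounded, arguing that otherwise ``the classes would lie in a fixed lattice, contradicting infinitude of $\ker(c_*)$''. But the classes \emph{always} lie in the fixed lattice $(1/r_k)\cdot FH^{4k}(M;\ZZ)$, and an infinite affine subset of a lattice can have uniformly bounded divisibility when it is not full: for instance $\{(n,1):n\in\ZZ\}\subset\ZZ^2$ consists entirely of primitive elements. Here the Poincar\'e dual of $\ker(c_*)$ is typically \emph{not} full in $FH^{4k}(M;\ZZ)$, and nothing forces $\sL_k(M)$ to lie in any sublattice in which it would be, so Lemma~\ref{lem:divi-vs-affine-lattice} does not apply and the argument breaks.

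The paper's remedy is to change the target lattice. The hypotheses that $\pi$ is an $n$-dimensional Poincar\'e duality group and $c_*[M]\neq 0\in H_n(B\pi;\QQ)$ are used to set up the commutative square
\[
\xymatrix{
 \bigoplus FH^{4k}(M;\ZZ) \ar[d]_{-\,\cap\,[M]} &
 \bigoplus FH^{4k}(B\pi;\ZZ) \ar[l]_{c^*} \ar[d]^{-\,\cap\,\deg(c)\cdot[B\pi]} \\
 \bigoplus FH_{n-4k}(M;\ZZ) \ar[r]_{c_*} &
 \bigoplus FH_{n-4k}(B\pi;\ZZ)
}
\]
with both verticals injective with finite cokernel; chasing it shows that $\ker(c_*)$ maps onto a \emph{full} sublattice of the free quotient $F\coker(c^*)$. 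One then replaces $\divi_k$ by the divisibility of the projection $\mathrm{proj}\,\sL_k(N)$ in $(1/r_k)\cdot F\coker(c_N^*)$. This is still an invariant of $N$ alone, because $\im(c^*)\subset H^*(M)$ is preserved by every homotopy self-equivalence (the classifying map is determined up to automorphisms of $\pi$, which do not change the image subgroup), so $(f^{-1})^*$ descends to an isomorphism of cokernels. In this quotient lattice the affine set $\mathrm{proj}(\sL_k(M))+\mathrm{proj}(\text{PD}(\ker c_*))$ \emph{is} full, and now Lemma~\ref{lem:divi-vs-affine-lattice} gives infinitely many divisibilities. Your sketch never locates this quotient, and without it step~(iii) does not go through.
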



Given Theorems \ref{thm:CW}, \ref{thmA} and \ref{thmB}
one might wonder whether $\sM^s(M)$ is infinite whenever $\sS^s(M)$ is infinite.
Our third main result, Theorem \ref{thmC} below,
shows this is not the case.

We define manifolds $M_{r,g}$ as follows.  Let $T^r = S^1 \times \dots \times S^1$
be the $r$ torus and let $K \subset T^r$ be the standard $2$-skeleton,
which is the union of all the co-ordinate $2$-torii.
For $k \geq 1$, the complex $K$ embeds into Euclidean space $\R^{4k+3}$.
We let $W_K$ be a regular neighbourhood of such an embedding and
define
$$M_{r, 0} := \del W_K$$
to be the boundary of $W_K$.
For any non-negative integer $g$ we then define
$$M_{r, g} := M_0 \sharp_g (S^{2k+1} \times S^{2k+1}) $$
to be the connected sum of $M_{r, 0}$ and $g$ copies of $S^{2k+1} \times S^{2k+1}$.

To state our results for the manifolds $M_{r, g}$ we use
the following variant of the manifold set
where an identification $\pi_1(M) = \pi$ is fixed.
Let $\sAut_\pi(M) \subset \sAut(M)$ be the subgroup whose base-point preserving representatives
induce the identity on $\pi_1(M)$.  We define
the {\em simple $\pi_1$-polarised manifold set} of $M$ by setting
$$\sM^s_\pi(M) := \sS^s(M)/\sAut_\pi(M)$$
and note there is a natural surjective forgetful map $\sM^s_\pi(M) \to \sM^s(M)$.
Since the group $\pi_{1} (M_{r,g}) \cong \Z^r$ has trivial Whitehead group
\cite{Bass-Heller-Swan(1964)},
we set $\sM_\pi(M_{r, g}) := \sM^s_\pi(M_{r, g})$.

\begin{thm} \label{thmC}
For all $r \geq 3$ and $g \geq r{+}3$,
we have $|\sS(M_{r, g})| = \infty$ but
$|\sM_{\pi}(M_{r, g})| = 1$.
%
\end{thm}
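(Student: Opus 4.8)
The plan is to establish the two halves of the statement separately: first that $|\sS(M_{r,g})| = \infty$, and second that the $\pi_1$-polarised manifold set is a single point. For the first half, I would compute the normal invariants $\sN(M_{r,g})$ and the surgery obstruction map $\theta$. Since $\pi_1 = \Z^r$ and $M_{r,g}$ is odd-dimensional of dimension $4k+3$, the relevant $L$-groups are $L^s_{4k+3}(\Z[\Z^r])$ and $L^s_{4k+4}(\Z[\Z^r])$, which by the Bass--Heller--Swan / Shaneson splitting decompose into sums of $L$-groups of $\Z$. The key point is that $\sN(M_{r,g}) \cong [M_{r,g}, G/\TOP]$ contains a large free part coming from $H^{4k}(M_{r,g};\Z) \otimes \Z_{(2)}$-type summands (detected rationally by $H^{4k}(M_{r,g};\Q)$, whose dimension grows with $r$ via the torus cohomology and with $g$), and by the splitting formulas the surgery obstruction map $\theta$ is not injective on this free part once $r$ and $g$ are large enough. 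In fact one expects the image $\eta(\sS(M_{r,g}))$ to be infinite, detected by the $\divi_k$ invariants; the role of $g \geq r+3$ is to guarantee enough middle-dimensional homology so that $\theta$ kills an infinite-rank subgroup (the point being that the surgery obstructions only see $c_*([M])$ and low-degree homology of $B\Z^r$).

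For the second half, $|\sM_\pi(M_{r,g})| = 1$, I would show that the polarised self-homotopy-equivalence group $\sAut_\pi(M_{r,g})$ acts transitively on $\sS^s(M_{r,g})$. The strategy is to realise structures by self-equivalences: given $[f\colon N \to M_{r,g}] \in \sS^s(M_{r,g})$, I want to produce a polarised homotopy self-equivalence $h$ of $M_{r,g}$ such that $h \circ f$ is homotopic (hence $s$-cobordant, since $\Wh(\Z^r) = 0$) to the identity — equivalently, $[f]$ already lies in the orbit of the base point under $\sAut_\pi$. The manifolds $M_{r,g}$ are engineered to have very large groups of homotopy self-equivalences: since $M_{r,0} = \partial W_K$ is the boundary of a regular neighbourhood of the $2$-skeleton of $T^r$ in $\R^{4k+3}$, it has a handle structure with handles in a restricted range of dimensions, and after forming the connected sum with $g$ copies of $S^{2k+1}\times S^{2k+1}$ one obtains a "highly connected modulo $B\pi$" situation where the structure set in the relevant degrees is built from middle-dimensional data on which $\sAut_\pi$ acts by a large group of automorphisms (roughly, a congruence-type subgroup of automorphisms of the middle-dimensional intersection form together with $\pi_1$-module automorphisms). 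Concretely, I would filter $\sS^s(M_{r,g})$ by the normal invariant and the $L$-theory action, and check orbit-wise: (a) the $L_{4k+4}(\Z[\Z^r])$-action's orbits are absorbed by self-equivalences supported on the $S^{2k+1}\times S^{2k+1}$ summands (a Wall-type realisation argument — self-equivalences of $\sharp_g(S^{2k+1}\times S^{2k+1})$ realise the full symplectic/automorphism action on the $L$-group), and (b) each normal-invariant class in $\eta(\sS^s)$ differs from another by such a self-equivalence.

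The main obstacle I anticipate is step (b): showing that the forgetful map $\sS^s(M_{r,g}) \to \sS^s(M_{r,g})/\sAut_\pi(M_{r,g})$ collapses everything, which requires understanding $\sAut_\pi(M_{r,g})$ well enough to see that its action hits all normal invariants in the image of $\eta$, not just the $L$-theory orbits. This is where the specific geometry of $M_{r,g}$ — being the boundary of a regular neighbourhood of $K \subset \R^{4k+3}$ — must be exploited: one needs that automorphisms of the relevant (co)homology that fix $\pi_1$-data are all realised by actual homotopy self-equivalences, which should follow from obstruction theory using the low skeletal dimension of $K$ (so that the Postnikov/obstruction groups that could obstruct realisation vanish in the relevant range) combined with the connected-sum summands providing room to realise the middle-dimensional automorphisms. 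A secondary technical point is bookkeeping with the $h$ versus $s$ decorations and the $\pi_1$-polarisation: since $\Wh(\Z^r) = 0$ these coincide, so I can freely use the $s$-cobordism theorem, but I must be careful that the realised self-equivalences are \emph{simple} and induce the identity on $\pi_1$, which is automatic for equivalences built from handle moves and the standard self-equivalences of $S^{2k+1}\times S^{2k+1}$.
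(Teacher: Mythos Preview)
Your proposal contains a fundamental misidentification of the mechanism, stemming from a dimension error. The manifold $M_{r,g}$ is $(4k{+}2)$-dimensional, not $(4k{+}3)$-dimensional: $W_K \subset \R^{4k+3}$ is a compact $(4k{+}3)$-manifold and $M_{r,0} = \partial W_K$. Consequently the relevant $L$-groups are $L_{4k+2}(\Z[\Z^r])$ and $L_{4k+3}(\Z[\Z^r])$, and more importantly the image $\eta(\sS(M_{r,g}))$ is \emph{finite}: the computation gives $\ker(\theta_{4k+2}) \cong H_{4k}(M_{r,g};\Z/2) \cong (\Z/2)^s$ with $s = \binom{r}{2}$. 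Your plan to show $|\sS(M_{r,g})| = \infty$ by producing an infinite image under $\eta$ (detected by $\divi_k$) therefore cannot work, and indeed this is exactly the point of the example: these manifolds illustrate that $|\sS| = \infty$ need not force $|\sM| = \infty$, precisely because the infiniteness is \emph{not} visible in $\eta$. The infiniteness of $\sS(M_{r,g})$ comes instead from $\im(\omega)$: one computes $\coker(\theta_{4k+3}) \cong H_3(T^r;\Z) \cong \Z^u$ with $u = \binom{r}{3}$, which is nonzero exactly when $r \geq 3$. This explains the hypothesis $r \geq 3$, which your proposal does not account for.

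You also misattribute the role of the hypothesis $g \geq r{+}3$. It has nothing to do with the surgery obstruction map or with making $\ker(\theta)$ large; it enters only in showing that the $L_{4k+3}(\Z[\Z^r])$-orbits are trivial in $\sM_\pi$. The argument there is: Wall realisation produces $\rho M_{r,g}$ which is \emph{stably} diffeomorphic to $M_{r,g}$, and one then invokes a cancellation theorem (Crowley--Sixt) to upgrade stable diffeomorphism to diffeomorphism, for which the $g \geq r{+}3$ bound is the needed stable range. Your proposed mechanism for step~(a), realising the $L$-group action by self-equivalences of the $S^{2k+1}\times S^{2k+1}$ summands via some symplectic action, is not what is used and would require justification. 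For step~(b), since $\im(\eta) \cong (\Z/2)^s$ is small, it is handled directly by pinch maps along embedded $4k$-spheres (composing with $\eta^2_{4k} \colon S^{4k+2} \to S^{4k}$), not by the obstruction-theory argument you sketch.
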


\subsection{The smooth manifold set} \label{ss:smooth}
Of course the manifold set may also be defined in the smooth category.
For a smooth manifold $M_\alpha$ we define its {\em smooth manifold set}
to be the set of diffeomorphism classes of smooth manifolds homotopy equivalent
to $M_\alpha$:
$$ \sM_{\Diff}(M_\alpha) := \{ N_\beta \, | \, N_\beta \simeq M_\alpha \}/{\cong}$$
Here $\cong$ denotes diffeomorphism.
The variations of the smooth manifold set
$\sM^s_{\Diff}(M_\alpha)$, $\sM^{s/h}_{\Diff}(M_\alpha)$
and $\sM^s_{\Diff, \pi}$ are defined analogously to the topological variations.
The simple smooth manifold set is frequently computed as the quotient of
the smooth structure set
$$ \sS^{s}_{\Diff}(M_{\alpha})/\sAut(M_{\alpha}) \xra{~\equiv~} \sM^s_{\Diff}(M_{\alpha}),$$
where $\sS^s_{\Diff}(M_\alpha)$ is the set of
simple homotopy equivalences $f \colon N_\beta \to M_\alpha$ up to
smooth $s$-cobordism.
For example, if $n \geq 5$ and $\Sigma_\alpha \simeq S^n$ is a homotopy $n$-sphere,
then
$$ \sM_{\Diff}(\Sigma_\alpha) = \Theta_n/\{\pm 1\},$$
where $\Theta_n$ is the Kervaire-Milnor group of oriented homotopy $n$-spheres
\cite{Kervaire-Milnor(1963)} and the group $\Z/2 = \{ \pm 1\}$ acts on $\Theta_n$ by reversing orientation.

There is a forgetful map $\sM_{\Diff}(M_\alpha) \to \sM(M)$,
where $M$ is the topological manifold underlying $M_\alpha$
and we also write $\divi_k \colon \sS_{\Diff}(M_\alpha) \to \NN$ for the composition
of the forgetful map and
$\divi_k \colon \sS(M) \to \NN$.
The following result is the smooth analogue of Theorem \ref{thmA}.

\begin{thm} \label{thmD}
If $\pi_1(M_\alpha) = \{e\}$
and $n \geq 5$ then $\sM_{\Diff}(M_\alpha)$ is infinite if and only if $\sS_{\Diff}(M_\alpha)$
is infinite. In such a case for some $0 < 4k < n$ the set $\divi_{k} (\sS_{\Diff}(M_\alpha))$ is infinite.
\end{thm}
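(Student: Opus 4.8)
The plan is to reduce Theorem \ref{thmD} to Theorem \ref{thmA} by comparing the smooth and topological structure sets. Recall that for a simply-connected closed manifold $M_\alpha$ of dimension $n \geq 5$ there is a forgetful map $F \co \sS_{\Diff}(M_\alpha) \to \sS(M)$ fitting into the smoothing-theory comparison: the (simply-connected) surgery exact sequences in the two categories are compatible, and the homotopy-topological/smooth normal invariant sets differ only by the fibre of $\TOP/\OO$, whose homotopy groups are the finite groups $\Theta_n$. Concretely, the kernel of the natural map $[M, \G/\OO] \to [M, \G/\TOP]$ (equivalently the set of smooth structures on a fixed topological manifold) is finite, since $\TOP/\OO$ has finite homotopy groups. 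Thus $F$ has finite fibres, and consequently $\sS_{\Diff}(M_\alpha)$ is infinite if and only if its image in $\sS(M)$ — hence $\sS(M)$ itself — is infinite.

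With that in hand, the two implications of the theorem go as follows. If $\sM_{\Diff}(M_\alpha)$ is infinite, then since there is a forgetful map $\sM_{\Diff}(M_\alpha) \to \sM(M)$ with finite fibres (again because a fixed topological manifold carries only finitely many smooth structures up to diffeomorphism, using that $\Theta_n$ is finite together with the finiteness of the relevant isotopy obstruction groups), the set $\sM(M)$ is infinite; by Theorem \ref{thmA}, $\sS(M)$ is infinite, and by the previous paragraph so is $\sS_{\Diff}(M_\alpha)$. Conversely, suppose $\sS_{\Diff}(M_\alpha)$ is infinite. By the previous paragraph $\sS(M)$ is infinite, so Theorem \ref{thmA} gives some $0 < 4k < n$ with $\divi_k(\sS(M))$ infinite. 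The point now is that the rational Pontryagin classes, and hence the $\sL$-classes, are preserved by the forgetful map: if $N_\beta$ is a smooth manifold with underlying topological manifold $N$, then $p_k(N_\beta) = p_k(N)$ in $H^{4k}(N;\QQ)$ under the canonical identification, so $\divi_k(f) = \divi_k(F(f))$ for $f \in \sS_{\Diff}(M_\alpha)$. Therefore $\divi_k \co \sS_{\Diff}(M_\alpha) \to \NN$ takes infinitely many values (its image surjects onto $\divi_k(\sS(M))$ whenever $F$ is surjective; if $F$ is not surjective one instead argues that $\divi_k$ is still unbounded on $\sS_{\Diff}(M_\alpha)$ because the finite fibres of $F$ cannot collapse an infinite image of $\divi_k$ — more precisely $\divi_k(\sS_{\Diff}(M_\alpha)) \supseteq \divi_k(\im F)$ and $\im F$ has finite complement-fibres so is ``cofinal'' enough for the divisibility function to remain unbounded). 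Since $\divi_k$ on $\sS_{\Diff}(M_\alpha)$ descends to $\sM_{\Diff}(M_\alpha)$ (as it does in the topological case, being a diffeomorphism invariant), $\divi_k(\sM_{\Diff}(M_\alpha))$ is infinite, whence $\sM_{\Diff}(M_\alpha)$ is infinite. This also records the final sentence of the theorem, that $\divi_k(\sS_{\Diff}(M_\alpha))$ is infinite for this value of $k$.

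The main obstacle I expect is the bookkeeping around surjectivity and finiteness of the forgetful maps $\sS_{\Diff}(M_\alpha) \to \sS(M)$ and $\sM_{\Diff}(M_\alpha) \to \sM(M)$: one must be careful that ``finitely many smooth structures on a fixed $N$'' is genuinely what is needed, rather than finitely many up to diffeomorphism, and that the action of $\sAut$ interacts correctly when passing to the manifold set. The cleanest route is probably to avoid surjectivity questions altogether by arguing directly at the level of the divisibility invariant: since $\divi_k \co \sS(M) \to \NN$ is unbounded and $\divi_k$ is constant on the finite fibres of $F$, any infinite subset of $\sS(M)$ on which $\divi_k$ is injective — which exists by Theorem \ref{thmA} — need not lie in $\im F$, so one should instead invoke the infinitude of $\sS_{\Diff}(M_\alpha)$ hypothesised in the converse direction and push that forward. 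In fact the simplest correct statement is: $\divi_k \co \sM_{\Diff}(M_\alpha) \to \NN$ has image containing $\divi_k(\sM(M'))$ for the underlying topological manifold $M'$ of $M_\alpha$, and this is infinite by Theorem \ref{thmA} precisely when $\sS_{\Diff}(M_\alpha)$ is infinite, by the finite-fibre comparison. I will spell out this last equivalence carefully, as it is the technical heart of the argument; everything else is formal.
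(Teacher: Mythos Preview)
Your proposal correctly identifies the overall architecture --- compare smooth and topological via the forgetful map $F_\sS$, use finiteness of $\pi_*(\TOP/\OO)$ for finite fibres, and transfer the divisibility argument from Theorem~\ref{thmA} --- but there is a genuine gap that you yourself flag and never close.

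Finite fibres of $F_\sS \colon \sS_{\Diff}(M_\alpha) \to \sS(M)$ give only the implication ``$\sS_{\Diff}(M_\alpha)$ infinite $\Rightarrow$ image infinite $\Rightarrow$ $\sS(M)$ infinite''. They say nothing about how large $\im(F_\sS)$ is inside $\sS(M)$. So from ``$\divi_k(\sS(M))$ is infinite'' you cannot conclude ``$\divi_k(\im F_\sS)$ is infinite'': the image could, a priori, sit inside a set on which $\divi_k$ takes finitely many values. Your attempted patches (``finite complement-fibres'', ``cofinal enough'', ``image containing $\divi_k(\sM(M'))$'') do not work --- the last one would require every topological manifold homotopy equivalent to $M$ to be smoothable, which is false. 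Note also that the easy direction $\sM_{\Diff}(M_\alpha)$ infinite $\Rightarrow$ $\sS_{\Diff}(M_\alpha)$ infinite is immediate from the surjection $\sS_{\Diff}(M_\alpha) \twoheadrightarrow \sM_{\Diff}(M_\alpha)$; your detour through $\sM(M)$ and $\sS(M)$ reintroduces the same unproven step.

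The paper supplies exactly the missing ingredient: a theorem of Weinberger \cite{Weinberger(1990)} asserting that $F_\sS(\sS_{\Diff}(M_\alpha))$ contains a subgroup of finite index in $\sS(M)$. With this, $\sL_k \circ \eta \circ F_\sS(\sS_{\Diff}(M_\alpha))$ is again a full sublattice of $(1/r_k)\cdot FH^{4k}(M;\ZZ)$, so the affine sublattice $\sL_k(M) + \sL_k \circ \eta \circ F_\sS(\sS_{\Diff}(M_\alpha))$ is full, and Lemma~\ref{lem:divi-vs-affine-lattice} gives $|\divi_k(\sS_{\Diff}(M_\alpha))| = \infty$. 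An alternative route, which you could have taken without citing Weinberger, is to observe that $\G/\OO \to \G/\TOP$ is a rational equivalence (since $\pi_*(\TOP/\OO)$ is finite), so $[M,\G/\OO] \to [M,\G/\TOP]$ has finite-index image, and then trace through the surgery exact sequences to get finite-index image of $F_\sS$ directly.
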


Theorems \ref{thmA} and \ref{thmD} show for simply-connected
smooth $n$-manifolds with $n \geq 5$ that
$\sM_{\Diff} (M_\alpha)$ is infinite if and only if $\sS(M)$ is infinite.
We believe this holds more generally.

\begin{conj} \label{conjA}
Let $M_\alpha$ be a smooth $n$-manifold with $n \geq 5$.
Then $|\sM_{\Diff}(M_\alpha)| = \infty$ if and only if $|\sM(M)| = \infty$.
\end{conj}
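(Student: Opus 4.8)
We sketch a strategy; the implication $|\sM_{\Diff}(M_\alpha)|=\infty\Rightarrow|\sM(M)|=\infty$ is unconditional, while its converse is the conjectural heart. For the forward implication, consider the forgetful map $\Phi\colon\sM_{\Diff}(M_\alpha)\to\sM(M)$, $[N_\beta]\mapsto[N]$, where $N$ is the topological manifold underlying $N_\beta$. Its fibre over $[N]$ is the set of diffeomorphism classes of smooth manifolds whose underlying topological manifold is homeomorphic to $N$. Since $n\geq 5$, concordant smoothings of $N$ are diffeomorphic — a concordance which is topologically $N\times I$ has vanishing Whitehead torsion and hence is a smooth $s$-cobordism — and the concordance classes of smoothings of $N$ form the set $[N,\TOP/\OO]$, which is finite because $\TOP/\OO$ is $2$-connected with finite homotopy groups and $N$ is a finite complex. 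Thus $\Phi$ has finite fibres, so $\im(\Phi)$, and therefore $\sM(M)$, is infinite.

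\emph{The $\sL$-class case of the converse.} Suppose $\divi_k(\sM(M))\subset\NN$ is infinite for some $k$ with $0<4k<n$; this holds automatically when $\pi_1(M)=\{e\}$ (Theorem \ref{thmA}) and under the hypotheses of Theorem \ref{thmB}. The key point is that the map on normal invariants $j\colon\sN_{\Diff}(M_\alpha)=[M,\G/\OO]\to[M,\G/\TOP]=\sN(M)$ induced by $\G/\OO\to\G/\TOP$ has finite kernel and cokernel — its homotopy fibre $\TOP/\OO$ is $2$-connected with finite homotopy groups — so $j$ is a rational isomorphism and, for each $k$, restricts to an isomorphism on the degree-$4k$ rational components through which $\divi_k$ is computed. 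Choose homotopy equivalences $f_i\colon N_i\to M$ with $\divi_k(f_i)\to\infty$, set $x_i=\eta(f_i)\in\sN(M)$ (so $\theta(x_i)=0$), and let $c$ be the exponent of $\coker(j)$, so that $c\,x_i=j(y_i)$ for some $y_i\in\sN_{\Diff}(M_\alpha)$. Since the surgery obstruction maps are group homomorphisms compatible with $j$, we get $\theta_{\Diff}(y_i)=\theta(c\,x_i)=c\,\theta(x_i)=0$, so $y_i=\eta(g_i)$ for some homotopy equivalence $g_i\colon N'_i\to M_\alpha$ from a smooth manifold $N'_i\simeq M_\alpha$. Running the degree-$4k$ divisibility computation from the proof of Theorem \ref{thmD} with $c\,x_i$ in place of $x_i$ gives $\divi_k(g_i)\to\infty$, whence $|\sM_{\Diff}(M_\alpha)|=\infty$. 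This reproves Theorem \ref{thmD} and extends it to every $M$ covered by Theorem \ref{thmB}.

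\emph{The remaining case.} There remains the case $|\sM(M)|=\infty$ with $\divi_k(\sM(M))$ finite for all $k$. When $\pi_1(M)$ contains torsion and $n\equiv 3\pmod 4$, both sides of the conjecture are in fact automatically infinite, since Theorem \ref{thm:CW} — whose proof carries over verbatim to the smooth category — applied to $[\id_M]$ and to $[\id_{M_\alpha}]$ makes $|\sM^{s/h}(M)|$ and $|\sM^{s/h}_{\Diff}(M_\alpha)|$ infinite. In general the expectation is that the infinitude of $\sM(M)$ not seen by the $\divi_k$ originates in the action of $L^s_{n+1}(\ZZ\pi)$ on $\sS^s(M)$ and is detected by a $\rho$-type invariant in the spirit of Theorem \ref{thm:CW}. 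Granting such a detection statement, the passage to the smooth category is formal, using the base point $[\id_{M_\alpha}]\in\sS^s_{\Diff}(M_\alpha)$, the compatibility of the $L^s_{n+1}(\ZZ\pi)$-actions on $\sS^s_{\Diff}(M_\alpha)$ and $\sS^s(M)$ under the forgetful map, and the fact that the relevant invariant is an $h$-cobordism invariant defined identically in the smooth category; the smooth orbit $L^s_{n+1}(\ZZ\pi)\cdot[\id_{M_\alpha}]$ then maps to an infinite subset of $\sM^{s/h}_{\Diff}(M_\alpha)$.

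\emph{The main obstacle.} The genuinely open step — the crux of the conjecture — is precisely this detection statement: when all $\divi_k$ are bounded, one must still show that $|\sM(M)|=\infty$ forces some higher $\rho$-type invariant to be unbounded on $\sM^{s/h}(M)$, and to do so inside an $L^s_{n+1}(\ZZ\pi)$-orbit that is visible in the smooth category. For torsion-free $\pi_1(M)$ — where Theorem \ref{thmC} shows that such $L^s_{n+1}(\ZZ\pi)$-orbits can collapse in $\sM^{s/h}(M)$ — this appears to require a new idea, presumably a proof that in that case a bounded family of $\sL$-class divisibilities already forces $|\sM(M)|<\infty$.
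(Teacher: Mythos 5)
The statement you were asked about is labelled as a \emph{conjecture} in the paper: the authors do not prove it, and they explicitly present Theorems \ref{thmA} and \ref{thmD} as establishing only the simply-connected case. So there is no proof in the paper to compare yours against, and your proposal --- which openly stops short of a complete argument --- is consistent with the actual status of the statement. Your unconditional pieces are sound. The forward implication via the forgetful map $\Phi\colon \sM_{\Diff}(M_\alpha)\to\sM(M)$ having finite fibres (concordance classes of smoothings forming the finite set $[N,\TOP/\OO]$, concordant smoothings being diffeomorphic in dimension $\geq 5$) is correct and is a worthwhile observation that the paper does not spell out. Your treatment of the $\sL$-class case is essentially a repackaging of the paper's own mechanism: the paper's proof of Theorem \ref{thmD} passes from $\sS(M)$ to $\sS_{\Diff}(M_\alpha)$ using Weinberger's theorem that $F_\sS(\sS_{\Diff}(M_\alpha))$ contains a finite-index subgroup of $\sS(M)$, whereas you lift through the normal invariants using that $\G/\OO\to\G/\TOP$ has finite homotopy fibre and multiply by the exponent of the cokernel; both routes reduce to the same full-affine-sublattice/divisibility argument (Lemma \ref{lem:divi-vs-affine-lattice}). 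One small caution: to conclude $\divi_k(g_i)\to\infty$ you need more than $\divi_k(f_i)\to\infty$ for an arbitrary sequence --- you need the classes $\sL_k(M)+c\cdot\sL_k(\eta(\sS(M)))$ to again form a full affine sublattice, which does hold in the situations of Theorems \ref{thmA} and \ref{thmB}(2) but should be said.

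The genuine gap is the one you name yourself: when $\pi_1(M)\neq\{e\}$ and the infinitude of $\sM(M)$ is not detected by any $\divi_k$ (nor by the $\rho$-invariant mechanism of Theorem \ref{thm:CW}), nothing in your argument, or in the paper, shows that $\sM_{\Diff}(M_\alpha)$ is infinite. Indeed the paper's own Theorem \ref{thmC} exhibits manifolds where infinite $L$-group orbits in $\sS^s(M)$ collapse to a point in $\sM_\pi(M)$, so the ``detection statement'' you would need cannot be obtained by soft compatibility arguments alone. Your claim that the proof of Theorem \ref{thm:CW} ``carries over verbatim to the smooth category'' is plausible but is asserted rather than checked. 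In short: you have correctly proved the easy direction, correctly reassembled the known cases of the hard direction, and correctly located the open problem; what you have not done --- and could not be expected to do --- is prove the conjecture.
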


We turn our attention back to the manifolds $M_{r, g}$.
Note that in the definition of $M_{r, g}$,
if we take the regular neighbourhood $W_K$ to be a smooth regular neighbourhood then
the manifolds $M_{r, g}$ acquire smooth structures,
which we denote by $M_{r, g, \alpha}$.
For the following theorem, which is a smooth refinement of Theorem \ref{thmC},
recall that $bP_{n+1} \subset \Theta_n$ denotes the subgroup of homotopy
$n$-spheres bounding parallelisable manifolds.

\begin{thm} \label{thmE}
Let $s := C^r_2$.
For all $r \geq 3$ and $g \geq r{+}3$,
we have $|\sS^s_{\Diff}(M_{r, g, \alpha})| = \infty$
but
\begin{enumerate}
\item
$|\sM^s_{\Diff, \pi}(M_{r, g, \alpha})| = 1$, if $k = 1$;
\item
$|\sM^s_{\Diff, \pi}(M_{r, g, \alpha})|
 \leq (|\Theta_{4k+2}| + r|\Theta_{4k+1}/bP_{4k+2}| + s |\Theta_{4k}|) < \infty$, if $k \geq 2$.
\end{enumerate}
\end{thm}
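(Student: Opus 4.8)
The plan is to combine the computation of the smooth structure set $\sS^s_{\Diff}(M_{r,g,\alpha})$ with the analysis of the homotopy-automorphism action already carried out in the proof of Theorem~\ref{thmC}. First I would use the surgery exact sequence in the smooth category together with the analogue of Theorem~\ref{thmC}: since $\pi_1 = \Z^r$ has trivial Whitehead group and $n = 4k+3 \geq 7$, the smooth structure set sits in an exact sequence with normal invariants $\sN_{\Diff}(M_{r,g}) = [M_{r,g}, \G/\OO]$, and it differs from the topological one by the contribution of the homotopy spheres $\Theta_n$. Concretely, there is a comparison map $\sS^s_{\Diff}(M_{r,g,\alpha}) \to \sS^s(M_{r,g})$ whose failure to be injective is controlled by the inertia group and by the image of $\Theta_n \to \sS^s_{\Diff}$; the infinitude $|\sS^s_{\Diff}(M_{r,g,\alpha})| = \infty$ then follows directly from $|\sS(M_{r,g})| = \infty$ in Theorem~\ref{thmC}. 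This is the easy half.

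For the bound on $|\sM^s_{\Diff,\pi}|$, recall $\sM^s_{\Diff,\pi}(M_{r,g,\alpha}) = \sS^s_{\Diff}(M_{r,g,\alpha})/\sAut_\pi(M_{r,g,\alpha})$. The key input is that in the topological category the $\sAut_\pi$-action is transitive, i.e.\ $|\sM_\pi(M_{r,g})| = 1$, which was established in the proof of Theorem~\ref{thmC}; moreover that argument actually produces the needed homotopy self-equivalences by geometric constructions (self-homeomorphisms built from the torus structure and from interchanging/sliding the $S^{2k+1}\times S^{2k+1}$ summands) which can be promoted to \emph{diffeomorphisms} when the $M_{r,g,\alpha}$ carry compatible smooth structures. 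So in the smooth category the $\sAut_\pi$-orbits on $\sS^s_{\Diff}$ cover the same topological structure, and two smooth structures in the same $\sAut_\pi$-orbit of the topological structure set differ by an element of $\ker\big(\sS^s_{\Diff}(M_{r,g,\alpha}) \to \sS^s(M_{r,g})\big)$. Hence $|\sM^s_{\Diff,\pi}|$ is bounded by the cardinality of this kernel (more precisely by the size of the set of smooth structures lying over the fixed point of $\sM_\pi(M_{r,g})$, modulo the residual action of the subgroup of $\sAut_\pi$ acting by diffeomorphisms).

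To identify this kernel one runs the smooth surgery sequence against the topological one: the difference is measured by $[M_{r,g}, \TOP/\OO]$, and since $\TOP/\OO$ has homotopy concentrated in degrees where $\pi_i(\TOP/\OO) \cong \Theta_i$ for $i \geq 5$ (and $\pi_3(\TOP/\OO)=0$, $\pi_i$ small for $i\le 6$), the obstruction decomposes along the cells of $M_{r,g}$. The relevant homology of $M_{r,g}$ is governed by that of the boundary of a regular neighbourhood of the $2$-skeleton $K$ of $T^r$ together with the $g$ summands $S^{2k+1}\times S^{2k+1}$; the cells in the dimensions contributing $\Theta_{4k}$, $\Theta_{4k+1}/bP_{4k+2}$ (the $bP$-quotient appearing because those classes get killed by the connected-sum / framing ambiguity) and $\Theta_{4k+2}$ occur with multiplicities $s = C^r_2$, $r$, and $1$ respectively, coming from the $C^r_2$ coordinate $2k$-dimensional pieces dual to the coordinate $2$-tori, the $r$ coordinate circles, and the top class. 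Summing these contributions gives the stated bound $|\Theta_{4k+2}| + r|\Theta_{4k+1}/bP_{4k+2}| + s|\Theta_{4k}|$, and when $k=1$ one has $\Theta_4 = \Theta_5 = \Theta_6 = 0$, so the bound is $1$. The main obstacle I expect is the bookkeeping in this last step: correctly matching the $\TOP/\OO$-obstruction classes to homology classes of $M_{r,g}$, verifying that the multiplicities are exactly $1, r, s$, and checking that the $bP$-subgroup rather than all of $\Theta_{4k+1}$ is the right quotient — this requires a careful look at how the smooth structures on the $S^{2k+1}\times S^{2k+1}$ summands and on the regular neighbourhood interact under diffeomorphisms of $M_{r,g,\alpha}$ (e.g.\ Gluck-type twists and the smoothing theory exact sequence relative to $M_{r,g}$).
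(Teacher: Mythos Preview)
Your smoothing-theory framework is reasonable and does give \emph{a} finite bound, but it does not give the bound stated in the theorem, and the discrepancy points to a genuine missing idea.  In your cell count for $[M_{r,g},\TOP/\OO]$ you list only the cells of $M_{r,0}$ in dimensions $4k$, $4k{+}1$, $4k{+}2$ (with multiplicities $s,r,1$) and forget the $2g$ cells in dimension $2k{+}1$ coming from the $S^{2k+1}\times S^{2k+1}$ summands.  For $k\ge 2$ these contribute $\pi_{2k+1}(\TOP/\OO)\cong\Theta_{2k+1}$, which is typically nonzero (e.g.\ $\Theta_7\cong\Z/28$), so your fibre bound picks up an extra factor $|\Theta_{2k+1}|^{2g}$ that is absent from the statement.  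Similarly, smoothing theory over the $(4k{+}1)$-cells gives $\Theta_{4k+1}$, not $\Theta_{4k+1}/bP_{4k+2}$; your parenthetical about ``connected-sum/framing ambiguity'' is not an argument.  Finally, the self-equivalences produced in Lemma~\ref{lem:saut} are pinch maps, not homeomorphisms (they have nontrivial normal invariant), so they cannot be ``promoted to diffeomorphisms'' as you claim; only the $L$-group part (Lemma~\ref{lem:triv_actn}) is handled by an actual diffeomorphism argument.

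The paper does not go through $\TOP/\OO$ at all.  It computes the \emph{smooth} normal invariants $[M_{r,g},\G/\OO]$ directly from the stable splitting of $M_{r,g}$ as a wedge of spheres, and compares them summand by summand with $[M_{r,g},\G/\TOP]$.  The summands in degrees $4k$ and $4k{+}2$ map, after $F_\sN$, injectively to $L^s_{4k+2}(\Z[\Z^r])$, so only their kernels $\Theta_{4k}$ and $\Theta_{4k+2}$ survive into $\eta(\sS_{\Diff})$; the degree-$(4k{+}1)$ summand is $\pi_{4k+1}(\G/\OO)\cong\Theta_{4k+1}/bP_{4k+2}$ on the nose.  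The essential new ingredient is a separate lemma, proved from Adams' results on the Hopf invariant and the $J$-homomorphism, that the composite
\[
\pi_{4k+2}(S^{2k+1}) \xra{S} \pi_{2k+1}^S \xra{\phi} \pi_{2k+1}(\G) \xra{j_*} \pi_{2k+1}(\G/\OO)
\]
is surjective.  This is exactly what is needed to realise the entire summand $\bigoplus_{2g}\pi_{2k+1}(\G/\OO)$ by pinch maps along the embedded spheres $S^{2k+1}\times\{\ast\}\subset M_{r,g,\alpha}$, hence by elements of $\sAut_\pi(M_{r,g})$.  That is how the $(2k{+}1)$-dimensional contribution disappears from the bound; without this step (or an equivalent one) you cannot reach the stated estimate.
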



\subsection{Further discussion} \label{ss:further}

In this subsection we briefly mention other work related to manifold sets and pose two questions.
This discussion is by no means an exhaustive review of the literature
relevant to manifold sets.


In \cite{Kreck&Lueck(2009)} Kreck and L\"uck studied various forms of rigidity for non-aspherical manifolds and defined a {\em Borel manifold} to be a manifold $M$ such that
for every homotopy equivalence $f \colon N \to M$ there is
a homeomorphism $h \colon N \to M$ which induces the same map on fundamental groups
as $f$.
In the notation of this paper, if $\Wh(\pi_1(M)) = 0$ then $M$ is Borel if and only if $|\sM_\pi(M)| = 1$.
In particular,
Theorem \ref{thmC} states that $M_{r, g}$ is Borel when $r \geq 3$ and $g \geq r{+}3$.
Kreck and L\"{u}ck found many examples of non-aspherical Borel manifolds and identified
a general criterion \cite[Theorem 0.21]{Kreck&Lueck(2009)} for a manifold $M$ to be Borel.
This criterion led us to think about $\im(\eta)$ in this paper.



We have already mentioned the work of Chang and Weinberger~\cite{Chang&Weinberger(2003)}. Recently, in \cite{Weinberger-Yu(2015)} Weinberger and Yu extended the results of \cite{Chang&Weinberger(2003)}. Working with
{\em oriented} manifolds $M$,
they defined the {\em reduced structure group of $M$}, $\wt \sS(M)$,
to be the quotient of $\sS(M)$ by the group generated
by elements of the form
$[f \colon N \to M] - [g \circ f \colon N \to M]$,
where $g \colon M \to M$ is an orientation preserving self homotopy
equivalence; i.e.~$\wt \sS(M)$ is the group of co-invariants of the action
of orientation preserving homotopy self homotopy equivalences on $\sS(M)$.
Defining the {\em oriented manifold set of $M$,} $\sM^+(M)$,
in the obvious way, we have the following diagram of implications
$$
|\wt \sS(M)| = \infty~~ \Longrightarrow~~
|\sM^+(M)| = \infty ~~\Longleftrightarrow~~
|\sM(M)| = \infty.
$$
Under general conditions on $\pi_1(M)$, including when $\pi_1(M)$ is a
non-trivial finite group Weinberger and Yu proved that
$\wt \sS(M)$ is infinite \cite[Theorem 3.9]{Weinberger-Yu(2015)}
if $n = 4k-1 \geq 7$.
See Example \ref{expl:Wg} for an elementary example
where $|\wt \sS(M)| < \infty$ but $|\sM(M)| = \infty$.
Even more recently Weinberger, Xie and Yu~\cite{Weinberger-Xie-Yu(2017)} studied
the group of coinvariants of a different but related action of
the group or orientation preserving self-homotopy equivalences
on $\sS(M)$ (it arises via the surgery composition formula of Ranicki \cite{Ranicki(2009)})  and prove that this group of coinvariants is infinite
if $\pi_1(M)$ contains torsion and
if $n = 4k-1 \geq 7$.


In \cite{Khan(2017)} Khan studied the manifold set in
the case $M = S^{1} \times L$,
where $L$ belongs to a certain class of lens spaces and showed
that $|\sM(S^1 \times L)| = \infty$, see \cite[Theorem 1.7]{Khan(2017)}.
Also Jahren and Kwasik studied the manifold set of $S^{1} \times \RR P^{n}$
and showed that $\sM(S^1 \times \R P^{n})$ is infinite if and only if
$n \equiv 3$~mod~$4$, \cite[Theorem 1.4]{Jahren-Kwasik(2008)}.


The action of the $L$-group on the structure set also often
has infinite orbits when $N = N_0 \sharp N_1$ is the connected sum
of manifolds, each of which has non-trivial fundamental group.
If $n \geq 3$ then $\pi = \pi_1(N)$ is the amalgamated product
of $\pi_1(N_0)$ and $\pi_1(N_1)$ and this frequently leads large UNil-terms in the
$L$-groups.
In \cite{Brookman-Davis-Khan(2007)} Brookman, Davis and Khan computed the
(oriented) manifold set of $\RR P^{n} \# \RR P^{n}$ and showed in particular that it is infinite
if $n \geq 4$ is even, with the infinite size being due to the action of UNil-terms
in the $L$-group.
An interesting feature of UNil-groups is that they are infinitely generated whenever they
are non-zero.
In contrast to this, the normal invariants are always finitely generated since they
are computed by a generalised (co)-homology theory with finitely generated coefficients
in each dimension.


In the simply-connected case, Madsen Taylor and Williams \cite{Madsen-Taylor-Williams(1980)} studied the subset of the manifold set of $M$ defined by the manifolds which are tangentially homotopy equivalent to $M$.
This tangential manifold set is finite and the authors were concerned with estimating its size.

We conclude with two open questions related to the manifold set,
the second of which was posed to us by Shmuel Weinberger.

\begin{ques}
Are there manifolds $M$ with $|\eta(\sS^s(M))| = \infty$ but $|\sM^s(M)| < \infty$?
\end{ques}

\begin{ques}
Are there $(2k{+}1)$-dimensional manifolds $M$
where the orbits of the action of $L^s_{2k+2}(\Z\pi)$ on $\sS^s(M)$ are infinite
but have finite images in $\sM^s(M)$?
\end{ques}








The rest of this paper is organised as follows. In Section~\ref{sec:surgery-preliminaries} we
review the surgery theory we need later.
Section~\ref{sec:1-ctd-case} contains the proofs of Theorem~\ref{thmA}
and~\ref{thmD},
Section~\ref{sec:non-1-ctd-case} the proof of Theorem~\ref{thmB} and Section~\ref{sec:finite_to_infinite} the proofs of Theorems~\ref{thmC} and~\ref{thmE}.

{\bf Acknowledgements:} We would like to thank Jim Davis, Wolfgang L\"{u}ck
and Shmuel Weinberger for helpful and stimulating conversations.


\section{Surgery preliminaries} \label{sec:surgery-preliminaries}
In this preliminary section we recall the surgery exact sequence and some related concepts from
surgery theory.  We shall assume that all manifolds have dimension $n \geq 5$.
The main references are \cite{Wall(1999)}, \cite{Ranicki(1992)} and \cite{Lueck(2001)}.

We already recalled the {\it simple structure set} $\sS^{s} (M)$ of a closed $n$-dimensional topological manifold $M$ in the introduction informally. To be more precise recall that its elements are represented by simple homotopy equivalences $f \co N \ra M$ and two such $f_{0} \co N_{0} \ra M$ and $f_{1} \co N_{1} \ra M$ are equivalent if there exists an $s$-cobordism $W$ between $N_{0}$ and $N_{1}$ together with a simple homotopy equivalence $F \co W \ra M \times [0,1]$ which restricts at the two ends to $f_{0}$ and $f_{1}$ respectively. Due to the $s$-cobordism theorem, see e.g.~\cite[Chapter 1]{Lueck(2001)}, the relation of $s$-cobordism can be replaced by that of homeomorphism.

There is also an $h$-decorated {\it structure set} $\sS^{h} (M)$ of $M$ whose elements are represented by homotopy equivalences $f \co N \ra M$ where $N$ is another closed $n$-dimensional topological manifold and two such $f_{0} \co N_{0} \ra M$ and $f_{1} \co N_{1} \ra M$ are equivalent if there exists an $h$-cobordism $W$ between $N_{0}$ and $N_{1}$ together with a homotopy equivalence $F \co W \ra M \times [0,1]$ which restricts at the two ends to $f_{0}$ and $f_{1}$ respectively. If $\pi_{1} (M) = \{ 1 \}$ then due to the $h$-cobordism theorem, see e.g.~\cite[Chapter 1]{Lueck(2001)}, the relation of the $h$-cobordism can be replaced by the condition that there exists a homeomorphism $h \co N_{0} \ra N_{1}$ such that $f_{1} \circ h \simeq f_{0}$. Due to the $s$-cobordism theorem this also holds whenever the Whitehead group $\Wh (\pi_{1} (M))$ vanishes, see e.g.~\cite[Chapter 2]{Lueck(2001)}.

In general there is a forgetful map $\sS^{s} (M) \ra \sS^{h} (M)$ which fits into the so-called
{\em Rothenberg sequence}, where the third term is $2$-torsion and depends on
various Whitehead groups from algebraic $K$-theory, see e.g.~\cite[Appendix C]{Ranicki(1992)}.

As we noted in the introduction, the main tool for calculating the simple structure set $\sS^{s} (M)$ of an $n$-dimensional manifold $M$ with $\pi = \pi_{1} (M)$ where $n \geq 5$ is the surgery exact sequence \eqref{ses:intro}
\[
\cdots \xra{~~~} L^{s}_{n+1}(\ZZ\pi) \xra{~\omega~} \sS^{s}(M) \xra{~\eta~} \sN(M) \xra{~\theta~}
L^{s}_n(\ZZ\pi).
\]
Here $\sN (M)$ are the {\it normal invariants} of $M$, which can be defined as a bordism set of degree one normal maps $(f,\overline{f}) \co N \ra M$ from $n$-dimensional manifolds $N$ to $M$. The symbol $\overline{f}$ denotes a stable bundle map $\nu_{M} \ra \xi$ which covers $f$ for some stable topological $\R^k$-bundle $\xi$ over $X$.  
The main point is that the normal invariants are calculable, they form a generalized (co-)homology theory, a fact which we will see later more concretely. The map $\eta$ is more or less obvious, it is obtained by pulling back the bundle data along the homotopy inverse of $f$.

The $L^{s}$-groups are the surgery obstruction groups defined either using quadratic forms over f.g. free based modules over the group ring $\ZZ\pi$ or using quadratic chain complexes of f.g.~free based modules over $\ZZ\pi$. The surgery obstruction map $\theta$ and the Wall realization map $\omega$ are designed to make the sequence exact.

There is also an $h$-decorated surgery exact sequence which calculates $\sS^{h} (M)$. In it, the $L^{s}$-groups are replaced by the $L^{h}$-groups defined using f.g.~free $\ZZ\pi$-modules, but the normal invariants remain the same. The Rothenberg sequences as in \cite[Appendix C]{Ranicki(1992)} quantify the difference between the $s$-decorated and $h$-decorated sequence.

In \cite{Wall(1999)} the sequence \eqref{ses:intro} is constructed as an exact sequence of pointed sets. However, it can be made into an exact sequence of abelian groups by identifying it with the algebraic surgery exact sequence of \cite{Ranicki(1992)} which is the bottom row of the following diagram:
\begin{equation} \begin{split}
	\label{eq:ses_abelian}
	\xymatrix{
	\cdots \ar[r] & L^{s}_{n+1} (\ZZ\pi) \ar[r]^{\omega} \ar[d]^{=} & \sS^{s} (M) \ar[r]^{\eta} \ar[d]^{\cong} & \sN (M) \ar[d]^{\cong} \ar[r]^{\theta} & L^{s}_{n} (\ZZ\pi) \ar[d]^{=} \\
	\cdots \ar[r] & L^{s}_{n+1} (\ZZ\pi) \ar[r] & \SS^{s}_{n+1} (M) \ar[r] & H_{n} (M;\LL_{\bullet} \langle 1 \rangle) \ar[r]_(.6){\asmb_{M}} & L^{s}_{n} (\ZZ\pi)
	}
\end{split}
\end{equation}
In the algebraic surgery exact sequence all entries are abelian groups and all maps are homomorphisms by definition. In fact all the terms are defined as $L$-groups of certain categories, but we will not need this fact. In \cite[Theorem 18.5]{Ranicki(1992)} it is shown that the vertical arrows are bijections and in this way the abelian group structures are obtained in the top row.  We will need that the term which is in bijection with the normal invariants is the homology of $M$ with respect to the $1$-connective cover $\LL_{\bullet} \langle 1 \rangle$ of the $L$-theory spectrum associated to the ring $\ZZ$. We will also need some properties of the assembly map $\asmb_{M}$. This is an important map, we note for example that the influential {\it Novikov conjecture} asserts that this map is always rationally injective. Various assembly maps including this one have been intensively studied in recent years, see for example \cite{Lueck-Reich(2005)} for an overview of the subject.

Furthermore the algebraic surgery exact sequence can be defined for any simplicial complex $K$ in the place of $M$ and it is covariantly functorial. Hence for our $M$ with the classifying $c\co M \ra B\pi$ we have a commutative diagram
\begin{equation}
	\begin{split}
\label{eqn:factoring-assembly-over-Bpi}
\xymatrix{
H_{n} (M;\LL_{\bullet} \langle 1 \rangle) \ar[r]^(.575){\asmb_{M}} \ar[d]_{c_{\ast}} & L^{s}_{n} (\ZZ\pi) \ar[d]^{=} \\
H_{n} (B\pi;\LL_{\bullet} \langle 1 \rangle) \ar[r]_(.575){\asmb_{\pi}} & L^{s}_{n} (\ZZ\pi).
}		
	\end{split}
\end{equation}

An essential point for us is the calculability of the normal invariants. Note that we have $\pi_{k} (\LL_{\bullet} \langle 1 \rangle) = \ZZ, 0, \ZZ/2, 0$ for $k \equiv 0,1,2,3$ and $k > 0$ and $\pi_{k} (\LL_{\bullet} \langle 1 \rangle) = 0$ for $k \leq 0$. Hence from the Atiyah-Hirzebruch spectral sequence we obtain that the normal invariants $\sN (M) \cong H_{n} (M; \LL_{\bullet} \langle 1 \rangle)$ form a f.g.~abelian group. In fact this homology theory can be fully understood using singular homology and topological $K$-theory via the homotopy pullback square~\cite[Remark 18.8]{Ranicki(1992)}
\[
\xymatrix{
\Omega^{\infty} \LL_{\bullet} \langle 1 \rangle \ar[rr] \ar[d] & &  \BO [1/2] \ar[d] \\
\prod_{k > 0} K(\ZZ_{(2)},4k) \times K(\ZZ/2,4k-2) \ar[rr] & &  \prod_{k > 0} K(\QQ,4k)
}
\]
However, we will mostly only use the rational information here. Note that by \cite[Remark 18.4]{Ranicki(1992)} we have the isomorphism
\begin{equation} \label{eq:HLC}
\sL \co \sN (M) \otimes \QQ \xra{\cong} H_{n} (M;\LL_{\bullet} \langle 1 \rangle) \otimes \QQ \xra{\cong} \bigoplus_{k > 0} H_{n-4k} (M;\QQ),
\end{equation}
which is given by the formula
\[
[(f,\overline{f}) \co N \ra M] \; \mapsto \; f_\ast (\sL (N) \cap [N]) - \sL (M) \cap [M],
\]
where $\sL (M) \in H^{4\ast} (M,\QQ)$ is the Hirzebruch $\sL$-polynomial in Pontryagin classes of $M$.

Let us now discuss the action of $\sAut (M)$ on $\sS^{s} (M)$ which we recalled in the introduction from the point of view of the previous remarks. Firstly note that we also have an action of $\sAut (M)$ on $\sN (M)$ also essentially given by post-composition. More explicitly, we have
\[
\sN (M) \times \sAut(M) \to \sN (M), \quad ([(f,\overline{f}) \colon N \to M], [g]) \mapsto [(g \circ f,\overline{g} \circ \overline{f}) \colon N \to M],
\]
where $\overline {f} \co \nu_{M} \ra \xi$ is the bundle map covering $f$ and the bundle map $\overline{g} \co \xi \ra (g^{-1})^{\ast} \xi$ is obtained using a homotopy inverse $g^{-1} \co M \ra M$ of $g$.


While $\sS^{s} (M)$ and $\sN (M)$ both admit abelian groups structures,
the actions above are not by automorphisms.
Indeed in \cite{Ranicki(2009)} Ranicki proves
a composition formula for these actions but we will not use Ranicki's formula here.

Next we shall discuss the versions of surgery which arise when we vary decorations.
Let $\hAut(M)$ denote the group of homotopy classes of homotopy equivalences from $M$ to $M$. It acts on the structure set $\sS^{h}(M)$ via post-composition:
\[
\sS^{h}(M) \times \hAut(M) \to \sS^{h}(M), \quad ([f \colon N \to M], [g]) \mapsto [g \circ f \colon N \to M].
\]
In this context we have the {\it $h$-decorated manifold set} $\sM^{h} (M)$ whose elements are represented by manifolds $N$ homotopy equivalent to $M$ up to $h$-cobordism. It is related to the structure set $\sS^{h}(M)$ via the forgetful map and it is easy to see that it is in fact a bijection
\[
\sS^{h}(M)/\hAut(M) \xra{\cong} \sM^{h}(M).
\]

The relations between the $4$ manifold sets which we have defined in the introduction and here are that they fit into the following commutative square
\begin{equation} \label{eqn:mfd-sets}
	\begin{split}
		\xymatrix{
			\sM^{s}(M) \ar@{>->}[d] \ar@{->>}[r] & \sM^{s/h} (M) \ar@{>->}[d]  \\
			\sM (M) \ar@{->>}[r] & \sM^{h} (M). \\
		}		
	\end{split}
\end{equation}
If the Whitehead group $\Wh (\pi)$ vanishes, which is the case for example when $\pi = \{ e \}$ or when
$\pi = \ZZ^{n}$, see \cite{Bass-Heller-Swan(1964)}, \cite[Chapter 2]{Lueck(2001)}, then all the maps are bijections but in general this may not be the case.

We conclude this preliminary section with a brief discussion of smooth surgery
and its relationship to topological surgery.
Let $M_\alpha$ be a closed smooth $n$-manifold with $n \geq 5$.
The simple smooth structure set, $\sS_{\Diff}(M_\alpha)$ and
the smooth normal invariant set $\sN_{\Diff}(M_\alpha)$ are defined
analogously to their topological counter parts, using smooth manifolds and bordisms.
Let $M$ be the topological manifold underlying $M_\alpha$.
There is a smooth surgery exact sequence for $M_\alpha$ which maps forgetfully to
topological surgery exact sequence for $M$, as shown below.
\begin{equation} \begin{split}
	\label{eq:ses_smooth}
	\xymatrix{
	\cdots \ar[r] & L^{s}_{n+1} (\ZZ\pi) \ar[r]^{\omega} \ar[d]^{=} &
	\sS^{s}_{\Diff} (M_\alpha) \ar[r]^{\eta} \ar[d]^{F_\sS} &
	\sN_{\Diff} (M_\alpha) \ar[d]^{F_\sN} \ar[r]^(0.525){\theta} & L^{s}_{n} (\ZZ\pi) \ar[d]^{=} \\
	\cdots \ar[r] & L^{s}_{n+1} (\ZZ\pi) \ar[r] &
	\sS^{s}(M) \ar[r] &
	\sN(M)  \ar[r]_(.45){\theta} & L^{s}_{n} (\ZZ\pi)
	}
\end{split}
\end{equation}
In the smooth category the normal invariants cannot be calculated via a generalised
homology theory but are computed using a generalised cohomology theory via
the bijection
\begin{equation*} 
 \sN_{\Diff}(M_\alpha) \equiv [M, \G/\OO],
\end{equation*}
where $\G/\OO$ is the homotopy fibre of the canonical map $B\OO \to B\G$
between the classifying spaces for stable vector bundles and stable spherical fibrations;
see \cite[Lemma 10.6]{Wall(1999)}.
The topological normal invariants of $M$ can be similarly computed via the bijection
$\sN(M) \equiv [M, \G/\TOP]$, where $\G/\TOP$ is the homotopy fibre of the canonical map
$B\TOP \to B\G$ and $B\TOP$ is the classifying space for stable $\R^k$-bundles;
see \cite[Theorem 10.1 Essay IV]{Kirby-Siebenmann(1977)}.
There is a canonical map $i \colon \G/\OO \to \G/\TOP$ inducing the map
$i_* \colon [M, \G/\OO] \to [M, \G/\TOP]$ which fits into the following commutative diagram:
\begin{equation} \begin{split}
	\label{eq:ses_normal_square}
	\xymatrix{
	\sN_{\Diff}(M_\alpha) \ar[d]_{F_\sN} \ar[r]^{\equiv} &
	[M, \G/\OO] \ar[d]^{i_*} \\
	\sN(M) \ar[r]_(0.4){\equiv} &
	[M, \G/\TOP]
	}
\end{split}
\end{equation}
%

\section{Determining when $|\sM(M)| = \infty$ for $\pi_{1} (M) = \{e\}$.} \label{sec:1-ctd-case}
The main subject of this section it the proof of Theorem~\ref{thmA}.
The proof of the Theorem~\ref{thmD} is similar and is presented afterwards.
We conclude the section with some examples of calculations
of $\sM(M)$.

We prefer to modify the statement of Theorem~\ref{thmA} a little and we prove the following theorem which implies the first statement of Theorem~\ref{thmA}.

We give the proof of the second statement of Theorem~\ref{thmA}
immediately after the proof
of the first statement.

\begin{thm} \label{thm:MM-infinite1}
Let $M$ be an $n$-dimensional simply connected closed manifold with $n \geq5$.  Then the following are equivalent:
\begin{enumerate}
\item
 $H^{4i}(M; \Q) \neq 0$ for some $0 < 4i < n$;
\item
$|\sS(M)| = \infty$;
\item
$|\sM(M)| = \infty$.
\end{enumerate}
\end{thm}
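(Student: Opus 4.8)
The plan is to prove the cycle of implications $(1) \Rightarrow (2) \Rightarrow (3) \Rightarrow (1)$, using the surgery exact sequence \eqref{ses:intro} together with the rational computation of the normal invariants in \eqref{eq:HLC} and the fact that $\eta$ is injective in the simply connected case (since $L^s_{n+1}(\ZZ)$ maps to the structure set with image detected rationally, or more simply since $\omega$ followed by $\eta$ is trivial and the relevant $L$-groups are finite after the split). Actually, the cleanest route in the simply connected case is to observe that $\sN(M) \cong [M,\G/\TOP]$ is a finitely generated abelian group whose rank equals $\sum_{0<4i<n} \dim_\QQ H^{4i}(M;\QQ)$, and that $\theta \otimes \QQ = 0$ because $L^s_n(\ZZ) \otimes \QQ$ is detected by the signature, which is a homotopy (indeed bordism) invariant of the degree-one normal map's source relative to $M$ --- so the image of $\eta$ has finite index in $\sN(M)$ whenever the coefficient groups don't obstruct. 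Hence $|\sS(M)| = \infty$ if and only if $\sN(M)$ has positive rank if and only if $H^{4i}(M;\QQ) \neq 0$ for some $0 < 4i < n$; this establishes $(1) \Leftrightarrow (2)$.

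For $(2) \Rightarrow (3)$ the key point is the divisibility invariant $\divi_k$. First I would establish (as promised in the text, "In Section~\ref{sec:1-ctd-case} we verify...") that $\divi_k \colon \sM(M) \to \NN$ is well-defined: two homeomorphic manifolds $N \cong N'$ over $M$ have the same rational Pontryagin classes, and since a homeomorphism $N \to N'$ together with homotopy equivalences to $M$ identifies the lattices $(1/r_k) FH^{4k}(N;\ZZ)$ compatibly, $\divi_k$ descends. Then, assuming $(1)$, pick $k$ with $H^{4k}(M;\QQ)\neq 0$ and use \eqref{eq:HLC}: the component of the rational normal invariant in $H_{n-4k}(M;\QQ)$ records $f_\ast(\sL_k(N)\cap[N]) - \sL_k(M)\cap[M]$, so as $[f]$ ranges over $\eta^{-1}$ of a $\ZZ$-worth of normal invariants detected in that summand (which exists by finite index of $\im\eta$), the class $\sL_k(N) \in (1/r_k)FH^{4k}(M;\ZZ)$ takes infinitely many values along an arithmetic progression $\sL_k(M) + \ZZ\cdot v$ for some nonzero primitive-ish $v$; consequently $\divi_k(\sL_k(N))$ takes infinitely many values (the divisibility of $a + mb$ as $m$ varies over $\ZZ$ is unbounded, e.g.\ running over $m$ making $a+mb$ divisible by larger and larger integers). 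Hence $\divi_k(\sM(M))$ is infinite, so $|\sM(M)| = \infty$ and simultaneously the "In such a case..." clause of Theorem~\ref{thmA} is proved. The implication $(3) \Rightarrow (1)$ is the contrapositive of what's built into $(1) \Leftrightarrow (2)$ plus the surjection $\sS(M) \twoheadrightarrow \sM(M)$: if all $H^{4i}(M;\QQ)$ vanish for $0<4i<n$ then $\sN(M)$ is finite, hence $\sS(M)$ is finite, hence $\sM(M)$ is finite.

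The main obstacle I anticipate is the arithmetic book-keeping in step $(2)\Rightarrow(3)$: one must be careful that the infinitely many normal invariants realized in $\sS(M)$ actually produce infinitely many \emph{distinct values} of the integer-valued divisibility function, not merely infinitely many distinct $\sL$-classes (a priori infinitely many lattice points could share a bounded divisibility if they all lay on a coset of a fixed finite-index sublattice --- but an arithmetic progression $a + \ZZ b$ with $b \neq 0$ does hit points of arbitrarily large divisibility, so this works, though it needs the honest lattice-theoretic lemma referenced as Definition~\ref{defn:divisibility-in-lattice}). A secondary subtlety is ensuring the target summand $H_{n-4k}(M;\QQ)$ with $H^{4k}(M;\QQ)\neq 0$ is genuinely hit by $\im(\eta)\otimes\QQ$: this follows from $\theta\otimes\QQ=0$, which in turn rests on the signature being the only rational invariant in $L^s_n(\ZZ)$ and its vanishing on the relevant normal maps --- a standard fact, but one I would cite carefully (e.g.\ via the identification of $\theta\otimes\QQ$ with a signature-difference map and Poincar\'e duality forcing the middle-dimensional contributions to cancel, or simply noting $L^s_n(\ZZ)\otimes\QQ$ is nonzero only for $n\equiv 0\ (4)$ and there the assembly/signature argument applies). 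Finally I should double-check the well-definedness of $\divi_k$ on $\sM(M)$ rather than merely $\sS(M)$, since the lattice $(1/r_k)FH^{4k}(M;\ZZ)$ is pinned to $M$; a homeomorphism $N\cong N'$ is a homotopy equivalence, so both acquire $M$-structures and the naturality of cohomology under the composite homotopy equivalence $N \xrightarrow{\cong} N' \to M$ versus $N \to M$ does the job up to the ambiguity of $\sAut(M)$, which acts on the lattice through a finite-index-preserving (indeed isometric up to torsion) automorphism and hence preserves divisibility.
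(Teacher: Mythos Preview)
Your overall strategy matches the paper's, and $(1)\Leftrightarrow(2)$ is essentially right but stated imprecisely: when $n\equiv 0\pmod 4$ the map $\theta\otimes\QQ$ is \emph{not} zero --- it is the surjective signature map onto $L_{4m}(\ZZ)\otimes\QQ\cong\QQ$ --- and the rank of $\sN(M)$ is $\sum_{0<4i\le n}\dim H^{4i}(M;\QQ)$, not $\sum_{0<4i<n}$. The correct formulation is that $\sS(M)\otimes\QQ\cong\bigoplus_{0<4i<n}H_{n-4i}(M;\QQ)$, since $\theta\otimes\QQ$ is an isomorphism on the $H_0$-summand and zero elsewhere; this gives the equivalence.

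The genuine gap is in $(2)\Rightarrow(3)$. Your key arithmetic claim --- that an arithmetic progression $a+\ZZ b$ with $b\neq 0$ in a lattice $L$ hits points of arbitrarily large divisibility --- is \emph{false} once $\mathrm{rank}(L)\ge 2$. In $\ZZ^2$ with $a=(1,0)$ and $b=(0,1)$, every point $(1,m)$ has divisibility $\gcd(1,m)=1$. So selecting a single $\ZZ$-line inside $\sL_k(\sS(M))$ does not suffice; the parenthetical reassurance in your ``main obstacle'' paragraph is exactly where the argument fails.

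The paper repairs this by first proving (Lemma~\ref{lem:image-of-str-set-is-lattice}) that $\sL_k(\sS(M))$ is a \emph{full} sublattice of $(1/r_k)\cdot FH^{4k}(M;\ZZ)$ --- this uses that $\sL\circ\eta$ is a rational isomorphism onto $\bigoplus_{0<4i<n}H_{n-4i}(M;\QQ)$, not merely that its image is nonzero --- so that $\sL_k(M)+\sL_k(\sS(M))$ is a full \emph{affine} sublattice. One then needs the lemma (Lemma~\ref{lem:divi-vs-affine-lattice}) that a full affine sublattice $l_0+L_0\subset L$ has infinitely many divisibilities: for each prime $p$ coprime to $|L/L_0|$ the reduction $L_0\otimes\ZZ/p\to L\otimes\ZZ/p$ is onto, so one can find $l_p\in L_0$ with $l_0-l_p$ divisible by $p$ in $L$. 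Fullness is precisely what makes this work and what your rank-one shortcut lacks. (Your argument could be salvaged by choosing $v$ parallel to $\sL_k(M)$ when the latter is nonzero, but guaranteeing such a $v$ lies in the image again requires fullness.)
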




\begin{proof}[Proof of (1) $\Leftrightarrow$ (2)]
Since $M$ is simply connected the surgery exact sequence for $M$ simplifies to the short exact sequence
\[
\xymatrix{
0 \ar[r] & \sS (M) \ar[r]^{\eta} & \sN (M) \ar[r]^{\theta} & L_{n} (\ZZ) \ar[r] & 0,
}
\]
see e.g.~\cite[Chapter 20]{Ranicki(1992)}. Tensoring with $\QQ$ yields the short exact sequence
\[
\xymatrix{
0 \ar[r] & \sS (M) \otimes \QQ \ar^{\eta}[r] & \sN (M) \otimes \QQ
\ar^{\theta}[r] & L_n (\ZZ) \otimes \QQ \ar[r] & 0.
}
\]
As noted earlier we have the isomorphism
\[
\sL \co \sN (M) \otimes \QQ \ra \bigoplus_{i > 0} H_{n-4i} (M;\QQ)
\]
and in addition we know that in the case $n = 4m$ the surgery obstruction map $\theta$ sends $H_{0} (M;\QQ) \cong \QQ$ isomorphically onto $L_n (\ZZ) \otimes \QQ \cong \QQ$ \cite[Chapter 20]{Ranicki(1992)}. Hence $\sS (M)$ is infinite if and only if $\sS (M) \otimes \QQ \neq 0$ if and only if its image under $\sL \circ \eta$ which equals $\oplus_{0 < 4i < n} H_{n-4i} (M;\QQ)$ is non-zero. The Poincar\'e duality isomorphism $- \cap [M] \co H^{4i} (M;\QQ) \cong H_{n-4i} (M;\QQ)$ now yields the desired statement.	
\end{proof}

\begin{proof}[Proof of (3) $\Rightarrow$ (2)] Clear. \end{proof}

For the proof of (2) $\Rightarrow$ (3) we need some preparation and therefore the proof itself is
presented at the end of this section. In Definition~\ref{def:divi} we shall construct for $0 < 4k < n$ such that $H^{4k} (M;\QQ) \neq 0$ a certain map, denoted
 \[
 \divi_{k} \co \sS (M) \ra \NN,
 \]
with the property that the value of $\divi_{k} (f)$ for a manifold structure $f \co N \ra M$ only depends on $N$ and not on
$f$ and then to show that the image of $\divi_{k}$ is infinite in $\ZZ$.
This clearly is enough.

Let $0 < 4k < n$ be such that $H^{4k} (M;\QQ) \neq 0$ and consider the
function
\[
\sL_k \co \sS (M) \ra H^{4k} (M;\QQ) \quad
[f \co N \ra M ] \mapsto (f^{-1})^\ast \sL_k (N) - \sL_k (M).
\]
The values of $\sL_k$ depend on $f$ and so it is not a good
candidate for $\divi_k$. Nevertheless it is a first step on the way
towards it. We note that the function $\sL_k$ is a homomorphism of abelian groups.
This follows from the identity
\begin{equation} \label{eq:sL}
(- \cap [M]) \circ \sL_k = \pr_k \circ \sL \circ \eta,
\end{equation}
where $\pr_k$ is the projection from $\bigoplus_{i>0} H_{n-4i}(M; \Q)$ to
$H_{n-4k}(M; \Q)$ and the fact that
$(-\cap [M]), \pr_k, \sL$ and $\eta$ are all homomorphisms.
Equation~\eqref{eq:sL} in turn follows from the appropriate projection of the equation
\[
((f^{-1})^\ast \sL (N) - \sL (M)) \cap [M] = f_\ast (\sL (N) \cap [N]) - \sL (M) \cap [M].
\]

The Hirzebruch $\sL$-class $\sL_k (-) \in H^{4k} (-;\QQ)$ is given as a rational linear combination of products of
Pontryagin classes $p_i$. Let $c_k$ be the lowest common multiple of the absolute values of the denominators of the coefficients in the expression
defining $\sL_k (-)$. For example
\[
\sL_{2} = \frac{7}{45} p_{2} - \frac{1}{45} p_{1}^{2}
\]
and so $c_{2} = 45$. For a space $X$ let $FH^{i} (X;\ZZ) := H^{i} (X;\ZZ)/\textup{tors}$ denote the free part of the cohomology. Then for smooth manifolds $M_\alpha$ the term $\sL_k (M_\alpha)$ belongs to the lattice $(1/c_k) \cdot FH^{4k} (M_\alpha;\ZZ)$ in the $\QQ$-vector space $H^{4k} (M_\alpha;\QQ)$, since for smooth manifolds the Pontryagin classes $p_{k} (M) \in  H^{4k} (M;\ZZ)$ are integral.  For topological manifolds we have rational Pontryagin classes $p_{k} (M) \in  H^{4k} (M;\QQ)$
and a rational equivalence $\BSO \ra \BSTOP$ see \cite[Annex 3, section 10]{Kirby-Siebenmann(1977)}, \cite{Novikov(1966)}.
It follows that for each $k > 0$ that there are universal positive integers $t_{k} > 0$
such that
\[
p_k(M) \in (1/t_k) \cdot FH^{4k} (M;\ZZ).
\]
%
Setting $t$ to be the lowest common multiple of the $t_i$ where $0 < 4i \leq n$
we see that $\sL_k (M)$ belongs to the lattice
$$(1/(c_k \cdot t)) \cdot FH^{4k} (M;\ZZ) \subset H^{4k} (M;\QQ),$$
where by a {\em lattice} $L$ we simply mean a finitely generated free abelian group
$L \cong \Z^r$. A {\em sublattice} of $L$ is a subgroup $L' \subset L$; it is called {\em full} if
$L' \subset L$ has finite index. For each $k$ set
\begin{equation} \label{eqn:defn-of-r-k}
r_k :=  c_k \cdot t.
\end{equation}

\noindent
{\bf Remark:}
In the first version of this paper posted to the arXiv we made a mistake
by omitting the factor $t$ in the definition of $r_k$.

\begin{lem} \label{lem:image-of-str-set-is-lattice}
The image $\sL_k (\sS (M))$ is a non-zero full sublattice of
\[
(1/r_k) \cdot FH^{4k} (M;\ZZ) \subset H^{4k} (M;\QQ).
\]
\end{lem}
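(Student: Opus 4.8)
The plan is to identify the image $\sL_k(\sS(M))$ as the image of a rationally-surjective homomorphism between finitely generated abelian groups, and then combine freeness with rational surjectivity to conclude it is a full sublattice. First I would observe that, by the proof of (1) $\Leftrightarrow$ (2) already given, the composite $\pr_k \circ \sL \circ \eta \colon \sS(M) \to H_{n-4k}(M;\QQ)$ is \emph{rationally surjective}: indeed, $\eta$ and $\sL$ induce isomorphisms after tensoring with $\QQ$ (the latter by \eqref{eq:HLC}), and the image of $\sS(M) \otimes \QQ$ under $\sL \circ \eta$ is exactly $\bigoplus_{0 < 4i < n} H_{n-4i}(M;\QQ)$, which projects onto $H_{n-4k}(M;\QQ)$. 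Via the identity \eqref{eq:sL} and the Poincar\'e duality isomorphism $-\cap[M]$, this means the homomorphism $\sL_k \colon \sS(M) \to H^{4k}(M;\QQ)$ has image a subgroup of $H^{4k}(M;\QQ)$ which spans the full rational vector space; since $H^{4k}(M;\QQ) \neq 0$ the image is non-zero.

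Next I would show the image lands in the stated lattice. For a manifold structure $f \colon N \to M$, the class $\sL_k(N) \in H^{4k}(N;\QQ)$ lies in $(1/r_k)\cdot FH^{4k}(N;\ZZ)$ by the definition of $r_k = c_k \cdot t$: write $\sL_k$ as a rational polynomial in Pontryagin classes, clear denominators by $c_k$, and use that each $p_i(N)$ lies in $(1/t_i)\cdot FH^{4i}(N;\ZZ) \subset (1/t)\cdot FH^{4i}(N;\ZZ)$. Since $f^{-1}$ is a homotopy equivalence, $(f^{-1})^\ast$ carries $FH^{4k}(N;\ZZ)$ isomorphically onto $FH^{4k}(M;\ZZ)$, hence $(f^{-1})^\ast\sL_k(N) \in (1/r_k)\cdot FH^{4k}(M;\ZZ)$; and $\sL_k(M)$ lies in the same lattice by the same argument applied to $M$. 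Therefore $\sL_k(f) = (f^{-1})^\ast\sL_k(N) - \sL_k(M) \in (1/r_k)\cdot FH^{4k}(M;\ZZ)$, so $\sL_k(\sS(M))$ is a subgroup of this lattice, hence itself a finitely generated free abelian group, i.e.\ a lattice.

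Finally I would assemble the conclusion: a subgroup $L \subset (1/r_k)\cdot FH^{4k}(M;\ZZ) \cong \ZZ^{b_{4k}}$ that spans the ambient $\QQ$-vector space $H^{4k}(M;\QQ)$ must have finite index (equivalently, have rank equal to $b_{4k} = \dim_\QQ H^{4k}(M;\QQ)$). This is the standard fact that a subgroup of $\ZZ^r$ of full rank is of finite index. Combined with the first paragraph, which gives the spanning property and non-triviality, this shows $\sL_k(\sS(M))$ is a non-zero full sublattice of $(1/r_k)\cdot FH^{4k}(M;\ZZ)$.

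I do not expect a serious obstacle here: the proof is essentially bookkeeping, assembling facts already established in the excerpt (the rational computation of $\sN(M)\otimes\QQ$, the rational surjectivity of $\theta$ onto $L_n(\ZZ)\otimes\QQ$ in the top dimension, and the $L$-class formula). The only point demanding a little care is the integrality statement $\sL_k(N) \in (1/r_k)\cdot FH^{4k}(N;\ZZ)$ for \emph{topological} $N$, which relies on the universal bound $p_i(N) \in (1/t_i)\cdot FH^{4i}(N;\ZZ)$ coming from the rational equivalence $\BSO \to \BSTOP$; but this is exactly the content for which $t$ (and hence $r_k$) was defined, so there is nothing new to prove.
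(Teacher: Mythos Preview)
Your proof is correct and follows essentially the same approach as the paper: both argue that $\sL_k$ is a homomorphism whose rationalization is surjective (using the identification $\sS(M)\otimes\QQ \cong \bigoplus_{0<4i<n} H_{n-4i}(M;\QQ)$), and that its image lands in the lattice $(1/r_k)\cdot FH^{4k}(M;\ZZ)$, whence it is a full sublattice. The paper packages the fullness argument via a commutative square comparing $\sL\circ\eta$ on $\sS(M)$ with its rationalization, working in homology and applying Poincar\'e duality at the end, whereas you invoke \eqref{eq:sL} to work directly in cohomology and appeal to the elementary fact that a full-rank subgroup of $\ZZ^r$ has finite index; these are the same argument in slightly different dress.
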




\begin{proof}
We already have that $\sL_k$ is a homomorphism of abelian groups. As the image of finitely generated abelian group
$\sL_k (\sS (M))$ forms a lattice in $H^{4k} (M;\QQ)$ whose values belong to $(1/r_k) \cdot FH^{4k} (M;\ZZ)$. That it is a non-zero lattice follows from the isomorphism
\[
\sL \circ \eta \co \sS (M)_{\QQ} := \sS (M) \otimes \QQ \xra{\cong} \bigoplus_{0 < 4i < n}
H_{n-4i} (M;\QQ).
\]
We next show that it is a full sublattice.
Before rationalising we still have the homomorphism (which might not be an isomorphism anymore)
\[
\sL \co \sN (M)/\textup{tors} \ra \bigoplus_{i > 0} (1/r_{i}) \cdot FH_{n-4i} (M;\ZZ)
\]
given by the same formula
\[
[(f,\overline{f}) \co N \ra M] \; \mapsto \; f_\ast (\sL (N) \cap [N]) - \sL (M) \cap [M].
\]
Denote
\[
H_M := \bigoplus_{i > 0} (1/r_{i}) \cdot FH_{n-4i} (M;\ZZ)
\]
and
\[
(H_M)_{\QQ} := \bigoplus_{i > 0} H_{n-4i} (M;\QQ).
\]
Consider the commutative diagram
\[
\xymatrix{
\sS (M) \ar[r]^(0.55){\sL \circ \eta} \ar[d] & H_M \ar[d] \\
\sS (M)_{\QQ} \ar[r]_(0.5){\sL \circ \eta} & (H_M)_{\QQ}
}
\]
Since $\sL \circ \eta \co \sS (M)_{\QQ} \ra (H_M)_{\QQ}$ is onto, it follows that $\sL \circ \eta \co \sS (M) \ra H_M$ has image a full sublattice of $H_M$. Projecting to the summand $(1/r_k) \cdot FH_{n-4k} (M;\ZZ) \subset H^{4k} (M;\QQ)$ we obtain a full sublattice of $(1/r_k) \cdot FH_{n-4k} (M;\ZZ)$ and the lemma follows by the Poincar\'e duality isomorphism $(1/r_k) \cdot FH_{n-4k} (M;\ZZ) \cong (1/r_k) \cdot FH^{4k} (M;\ZZ)$.
\end{proof}

In order to obtain the function $\divi_k$ from $\sL_k$ we
recall the concept of divisibility for elements of lattices and establish some elementary properties of divisibility.
Recall that a lattice is simply a finitely generated free abelian group.

\begin{defn} \label{defn:divisibility-in-lattice}
Let $L$ be lattice. For $x \in L-\{0\}$ define the
{\em divisibility of $x$}, denoted by $\divi (x) = d \in \NN$, to be the largest
number such that $x = d \cdot x_0$ for some $x_0 \in L$.
We set $\divi(0) := 0$.
\end{defn}

\begin{lem} \label{lem:divi-vs-iso}
If $h \co L_0 \ra L_1$ is an isomorphism of lattices then for all $x \in L_0$ we have
$\divi (x) = \divi (h(x))$.
\end{lem}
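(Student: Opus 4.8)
The plan is to reduce everything to the defining property of divisibility in Definition \ref{defn:divisibility-in-lattice}, using only that $h$ and $h^{-1}$ are group homomorphisms between lattices. First I would dispose of the trivial case: if $x = 0$ then $h(x) = 0$ since $h$ is a homomorphism, and $\divi(0) = 0 = \divi(h(0))$ by convention. So assume $x \in L_0 - \{0\}$; then $h(x) \in L_1 - \{0\}$ because $h$ is injective, and both divisibilities are genuine positive integers.

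Next I would prove the inequality $\divi(x) \le \divi(h(x))$. Write $d := \divi(x)$, so $x = d \cdot x_0$ for some $x_0 \in L_0$. Applying $h$ and using that it is a homomorphism gives $h(x) = d \cdot h(x_0)$ with $h(x_0) \in L_1$, so $d$ is one of the numbers witnessing divisibility of $h(x)$; since $\divi(h(x))$ is the largest such number, $d \le \divi(h(x))$. The reverse inequality follows by the symmetric argument applied to the isomorphism $h^{-1} \co L_1 \to L_0$ and the element $h(x) \in L_1$: we get $\divi(h(x)) \le \divi(h^{-1}(h(x))) = \divi(x)$. Combining the two inequalities yields $\divi(x) = \divi(h(x))$.

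There is essentially no obstacle here; the only point requiring a word of care is that the argument uses nothing about $h$ beyond being a homomorphism in each direction — in particular it does not use that $h$ is surjective in the first inequality, only in the guise of $h^{-1}$ being defined for the second. One could even state the slightly stronger fact that any injective homomorphism of lattices satisfies $\divi(x) \mid \divi(h(x))$, but for the present purpose the isomorphism statement is all that is needed, and it is exactly what is used later to transport the function $\divi_k$ along the Poincar\'e duality isomorphism.
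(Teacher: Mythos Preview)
Your proof is correct; the paper in fact states this lemma without proof, treating it as evident. Your argument via the two inequalities using $h$ and $h^{-1}$ is the natural one and exactly what the authors presumably had in mind.
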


\begin{lem} \label{lem:divi-vs-lattice}
If $L \neq \{0\}$ is a non-zero lattice, then the set
$\{ \divi (x) \, | \, x \in L \}$ is infinite.
\end{lem}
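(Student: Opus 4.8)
The plan is to prove Lemma~\ref{lem:divi-vs-lattice} by exhibiting, for a nonzero lattice $L$, an infinite family of elements whose divisibilities are pairwise distinct. Since $L \cong \ZZ^r$ with $r \geq 1$, it suffices by Lemma~\ref{lem:divi-vs-iso} to work with $L = \ZZ^r$, and in fact with the rank-one sublattice spanned by the first standard basis vector $e_1$, because any element of a sublattice has divisibility in $L$ at least as large as... — actually one must be slightly careful here, as divisibility is not monotone under passage to sublattices in the wrong direction. So instead I would argue directly inside $L = \ZZ^r$.

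**First** I would observe that for $L = \ZZ^r$ and the element $x_m := m \cdot e_1 = (m, 0, \dots, 0)$ with $m \in \NN$, we have $\divi(x_m) = m$. Indeed $x_m = m \cdot e_1$ shows $\divi(x_m) \geq m$, and conversely if $x_m = d \cdot y$ for some $y = (y_1, \dots, y_r) \in \ZZ^r$, then comparing first coordinates gives $m = d \cdot y_1$, so $d \mid m$ and hence $d \leq m$. Therefore $\divi(x_m) = m$ exactly. **Then** the set $\{\divi(x) \mid x \in L\}$ contains $\{\divi(x_m) \mid m \in \NN\} = \NN$, which is infinite. This completes the argument once we transport back along a chosen isomorphism $h \colon \ZZ^r \xra{\cong} L$, which exists because $L$ is a nonzero lattice; by Lemma~\ref{lem:divi-vs-iso}, $\divi(h(x_m)) = \divi(x_m) = m$, so the divisibilities of the elements $h(x_m) \in L$ are pairwise distinct.

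**There is essentially no main obstacle here** — the statement is elementary and the only point requiring a moment's care is the direction of the divisibility comparison (one uses the coordinate projection to bound $d$ from above, not a sublattice inclusion). I would present the proof in two or three lines: fix an isomorphism $L \cong \ZZ^r$ with $r \geq 1$, note that the images of $m \cdot e_1$ for $m \in \NN$ have divisibility exactly $m$, and conclude. If one wished to avoid even choosing a basis, one could alternatively pick any $0 \neq v \in L$, consider the elements $m \cdot v$, and show $\divi(m \cdot v) \geq m$ together with the fact that these divisibilities are unbounded because $\divi(mv) = m \cdot \divi(v) \geq m$; but the coordinate argument is cleanest and is the version I would write.
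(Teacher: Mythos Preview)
Your proof is correct. The paper itself does not supply a proof of this lemma; it is stated without proof, presumably because it is regarded as elementary. Your argument via $\divi(m\cdot e_1)=m$ in $\ZZ^r$ is exactly the kind of two-line verification the authors are implicitly leaving to the reader, and your use of Lemma~\ref{lem:divi-vs-iso} to reduce to the standard lattice is appropriate.
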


We define an {\em affine sublattice} of $L' \subset L$ to be a coset
$l_0 + L_0 \subset L$
where $L_0 \subset L$ is a lattice;
it is called {\em full} if $L_0$ is full.
We define $\divi(L') := \{\divi(l) \,|\, l \in L' \} \subseteq \NN$ to be the set of
divisibilities of elements in $L'$.

\begin{lem} \label{lem:divi-vs-affine-lattice}
If $L \neq \{0\}$ is a non-zero lattice and $L' \subseteq L$ is a full affine sublattice,
then $|\divi(L')| = \infty$.
\end{lem}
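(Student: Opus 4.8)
The plan is to show directly that $\divi(L')$ is an \emph{unbounded} subset of $\NN$, which is clearly enough. Write $L' = l_0 + L_0$ with $L_0 \subseteq L$ a full sublattice and set $m := [L : L_0] < \infty$. Since $m L \subseteq L_0$, every element of the form $l_0 + m w$ with $w \in L$ lies in $L'$, and these are the elements of $L'$ I will use to produce large divisibilities.

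The key step is a mod~$p$ argument. Fix an arbitrary bound $B \in \NN$ and choose a prime $p > B$ with $p \nmid m$; this is possible since $m$ has only finitely many prime divisors. Reducing modulo $p$, the quotient $L/pL$ is a finite-dimensional $\F_p$-vector space on which multiplication by $m$ is invertible, because $p \nmid m$. Hence there exists $w_0 \in L$ with $l_0 + m w_0 \in pL$. Moreover the full set of solutions is exactly the coset $w_0 + pL$: for $l_0 + m w \in pL$ is equivalent to $m(w - w_0) \in pL$, which by injectivity of multiplication by $m$ on $L/pL$ is equivalent to $w - w_0 \in pL$. Since $L \neq \{0\}$, the subgroup $pL$ is infinite, so this coset of solutions is infinite as well.

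Because $m \neq 0$ and $L$ is torsion-free, the map $w \mapsto l_0 + m w$ is injective; therefore among the infinitely many elements $l_0 + m w$ with $w \in w_0 + pL$ at least one, call it $y$, is non-zero. Then $y \in L'$, $y \neq 0$, and $y = p y'$ for some $y' \in L$, so by Definition~\ref{defn:divisibility-in-lattice} the integer $p$ divides $\divi(y)$; in particular $\divi(y) \geq p > B$. As $B$ was arbitrary, $\divi(L')$ is unbounded and hence infinite.

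The only genuinely delicate point I anticipate is guaranteeing that the element $y$ can be taken non-zero rather than merely exhibiting a single solution $w_0$; this is precisely why one records that the solution set $w_0 + pL$ is infinite, using the hypothesis $L \neq \{0\}$. Everything else is routine linear algebra over $\ZZ$ and $\F_p$. (One could alternatively split into the cases $l_0 \in L_0$ and $l_0 \notin L_0$, invoking Lemma~\ref{lem:divi-vs-lattice} applied to $L_0$ in the first case, but the argument above handles both uniformly.)
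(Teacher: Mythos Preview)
Your proof is correct and follows essentially the same prime-to-index argument as the paper: for each prime $p$ not dividing $[L:L_0]$ you produce an element of $L'$ lying in $pL$, and conclude that $\divi(L')$ is infinite. The only differences are cosmetic---you work inside the sub-sublattice $mL \subseteq L_0$ rather than $L_0$ itself, and you are a bit more careful than the paper in ensuring the exhibited element is non-zero (the paper tacitly assumes this).
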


\begin{proof}
Let $L' = l_0 + L_0$ for a full sublattice $L_0$,
set $T : = L/L_0$ and consider the short exact sequence
$$ 0 \to L_0 \to L \to T \to 0.$$
Since $T$ is finite there are infinitely many primes $p$ which are prime to $|T|$.
Let $p$ be such a prime.
Since $T \otimes \Z/p = 0$, when we
tensor the sequence above with $\Z/p$ we obtain
an exact sequence
$$ L_0 \otimes \Z/p \to L \otimes \Z/p \to 0.$$
Hence we may find $l_p \in L_0$ such that $l_0 \equiv l_p$~mod~$p$.
It follows that $l_0 - l_p \in L'$ is divisible by $p$
and so $p \in \divi(L')$.  As there are infinitely many primes
prime to the order of $T$ we conclude that
$|\divi(L')| = \infty$.
\end{proof}

For a manifold $N$ we define $L_N := (1 / r_k) \cdot FH^{4k} (N)$ where $r_{k}$ is defined by the equation~\eqref{eqn:defn-of-r-k}. We have $\sL_k (N) \in L_N$.

\begin{defn} \label{def:divi}
For a manifold structure $f \colon N \to M$, define $\divi_k (f) = \divi (\sL_k (N))$.
\end{defn}

\begin{proof}[Proof of (2) $\Rightarrow$ (3)]
A connection between $\sL_k$ and $\divi_k$ will be obtained via
Lemma \ref{lem:divi-vs-affine-lattice}. Recall that for $f \co N \ra M$
representing an element in $\sS (M)$ we have the isomorphism
$(f^{-1})^\ast \co (1 / r_k) \cdot FH^{4k} (N) \ra (1 / r_k) \cdot FH^{4k} (M)$ and it follows from
Lemma \ref{lem:divi-vs-iso} that
\[
\divi (\sL_k (N)) = \divi ((f^{-1})^\ast \sL_k (N)).
\]
Moreover, we can consider the full affine sublattice
\[
L'_{M} := \sL_{k} (M) + \sL_{k} (\sS (M)) = \{ (f^{-1})^\ast (\sL_k (N)) \} \subset L_{M}.
\]
By Lemma \ref{lem:image-of-str-set-is-lattice} $L'_{M} \subset L_{M}$ is a full sublattice and so $|\divi (L'_{M})| = \infty$ by Lemma~\ref{lem:divi-vs-affine-lattice}. It follows immediately that $|\divi_{k} (\sS (M))| = \infty$ and so $|\sM (M) = \infty|$.
\end{proof}

\begin{proof}[Proof of the second statement in Theorem~\ref{thmA}]
	In the preceding proof of the implication (2) $\Rightarrow$ (3) of Theorem~\ref{thm:MM-infinite1} we showed that $|\sS(M)| = \infty$ implies $|\divi_{k} (\sS(M))| = \infty$ which together with the implication (3) $\Rightarrow$ (2) gives the desired statement.
\end{proof}

\begin{proof}[Proof of Theorem \ref{thmD}]
Consider the forgetful map $F_\sS \colon \sS_{\Diff}(M_\alpha) \to \sS(M)$ for
a closed smooth simply-connected $n$-manifold $M_\alpha$ with $n \geq 5$
and let $\TOP/\OO$ be the fibre of the canonical map
$B\OO \to B\TOP$.
By smoothing theory, see \cite[p.\,289, Appendix C Essay V]{Kirby-Siebenmann(1977)},
the group of homotopy classes $[M, \TOP/\OO]$ acts transitively on
the non-empty pre-images of $F_\sS$.
By \cite[Theorem 5.5 (II), Essay V]{Kirby-Siebenmann(1977)} the groups
$\pi_i(\TOP/\OO)$ are finite for all $i$ and so $[M, \TOP/\OO]$ is finite
and the map $F_\sS$ is finite to one.
Thus if $\sS(M)$ is finite then $\sS_{\Diff}(M_\alpha)$ is also finite.

Conversely, suppose that $\sS(M)$ is infinite.
By Theorem \ref{thm:MM-infinite1} and its proof we know for some $k$
with $0 < 4k < n$ that
$(1/r_k) \cdot FH_{n-4k}(M; \Z)$ is non-zero and contains
$\sL_k \circ \eta(\sS(M))$ as a full affine sub-lattice.
By a theorem of Weinberger \cite{Weinberger(1990)},
$F_\sS(\sS_{\Diff}(M_\alpha)) \subset \sS(M)$ contains a subgroup
of finite index.
Hence $\sL_k \circ \eta(\sS(M)) \circ F_\sS$ is also a full affine sublattice
of $(1/r_k) \cdot FH_{n-4k}(M; \Z)$ and so by Lemma \ref{lem:divi-vs-affine-lattice}
the set $\divi_k(\sS_{\Diff}(M_\alpha))$ is infinite.
This completes the proof.
\end{proof}

We conclude this section with some examples of calculations of
$\sM(M)$.


\begin{expl}[The manifold set of $\C P^n$]
For $n \geq 3$, the surgery exact sequence for $\C P^n$ gives an isomorphism
$$ \sS(\C P^n) \cong \bigoplus_{i > 0} H_{2n-4i-2}(\C P^n; \Z/2)
\bigoplus_{i > 0} H_{2n-4i}(\C P; \Z).$$
The action of $\sAut(\C P^n) = \{\pm 1\}$ on $\sS(\C P^n)$ is identified with
the sign action on this groups and hence there is a bijection
$$ \sM(\C P^n) \cong \left( \bigoplus_{i > 0} H_{2n-4i-2}(\C P^n; \Z/2)
\bigoplus_{i > 0} H_{2n-4i}(\C P; \Z) \right)/\{\pm 1\}.$$
\end{expl}

\begin{expl}[The manifold set of $W_g = \sharp_g(S^{4k} \times S^{4k})$]
\label{expl:Wg}
For $k \geq 1$, define the manifold $W_g := \sharp_g(S^{4k} \times S^{4k})$
and assume $g \geq 1$.
Computing the surgery exact sequence of $W_g$
gives an isomorphism
$$ \sS(W_g) \cong
H_{4k}(W_g; \Z).$$
The intersection form of $W_g$ is canonically
identified with $H_{+}(\Z^g)$, the standard hyperbolic form on $\Z^g$
and we identify
$$\sS(W_g) = H_{+}(\Z^g).$$
With this identification, the action of $\sAut(W_g)$ on
$\sS(W_g)$ factors over the action of
the quasi-orthogonal group, $\OO(g) \rtimes \Z/2$, on the set $H_{+}(\Z^g)$.
Indeed, there is a bijection
$$ \sM(W_g) \equiv
H_{+}(\Z^g) / (\OO(g) \rtimes \Z/2)$$
and by Theorem \ref{thmA} the map $\divi_k \colon \sM(W_g) \to \NN$ has infinite image.
It is not hard to show that $\wt \sS(W_g)$,
the reduced structure group of $W_g$ whose definition is recalled in Section \ref{ss:further},
satisfies
$\wt \sS(W_g) \otimes \Q = 0$.
Hence $|\wt \sS(W_g)| < \infty$,
whereas $|\sM(W_g)| = \infty$.
\end{expl}

\section{Conditions ensuring $|\sM^{s/h}(M)| = \infty$ when $\pi_{1} (M) \neq \{ e \}$.} \label{sec:non-1-ctd-case}
In this section we prove Theorem \ref{thmB}.
Comparing the cardinalities of $\sS^s(M)$ and $\sM^{s/h}(M)$ is more complicated
when $M$ is not simply-connected.
Most importantly, for an $n$-dimensional manifold $M$ with $\pi = \pi_{1} (M)$ and $n \geq 5$ the surgery exact sequence~\eqref{ses:intro} does not split into short exact sequences in general.
The $L$-group $L^{s}_{n+1} (\ZZ \pi)$ may be infinite and the map $\omega$ may be injective on an infinite subgroup of $L^{s}_{n+1} (\ZZ \pi)$.  In such a case we can have $\sS^{s} (M)$ infinite and $\sM^{s} (M)$ finite as is shown in the next section.

However, we would still like to find conditions which guarantee that $\sM^{s}_{h} (M)$ is infinite
(hence $\sM^{s} (M)$, $\sM (M)$, and $\sM^{h} (M)$ as well, see \eqref{eqn:mfd-sets})
when $\eta(\sS^s(M))$ is infinite. In this case in Theorem~\ref{thmB} we offer three alternative additional assumptions each of which implies that $\sM^{s}_{h} (M)$ is infinite when $\eta(\sS^s(M))$ if infinite.
We do not know whether they are also necessary.

A word about decorations is needed at this point. On one hand in the assumptions of Theorem we have $|\eta (\sS^{h} (M))| = \infty$, on the other hand the conclusion is about the $s/h$-decorated manifold set
$\sM^{s}_{h} (M)$. We note firstly that the invariant $\divi_{k} (f)$ is clearly an $h$-cobordism invariant,
secondly that we have the commutative diagram
\[
\xymatrix{
\sS^{s} (M) \ar[r]^{\eta} \ar[d] & \sN (M) \ar[d]^{=} \\
\sS^{h} (M) \ar[r]_{\eta}  & \sN (M)
}
\]
and thirdly the existence of the Rothenberg sequences of \cite[Appendix C]{Ranicki(1992)}. The relative terms in the Rothenberg sequences are $2$-torsion and $\sN (M)$ is a f.g. abelian group. It follows that $\eta(\sS^{s} (M))$ is a finite index subgroup of $\eta(\sS^{h} (M))$.
In the following we will therefore
talk about $\eta$ and $\im (\eta)$ and will mean either of the two possible maps depending on the context.

The proofs of the respective items of Theorem~\ref{thmB} are similar to the proof of the implication (1) $\Rightarrow$ (3) of Theorem~\ref{thm:MM-infinite1}. The main difficulties which force us to add further assumption are due to the fact that the surgery exact sequence is more complicated in the non-simply-connected case. For example even if we know that $|\im (\eta)|$ is infinite we were not able to find a lattice $L \subset \sN (M) \otimes \QQ$ and a non-zero lattice $L' \subset \im (\eta)/\textup{tors}$ such that for some $0 < 4k <n$ the coset $\sL_{k} (M) + L'$ would be an affine full sublattice in $L$ in general.

\begin{proof}[Proof of Theorem~\ref{thmB} (1)]
The surgery exact sequence is functorial for homotopy equivalences
$f \colon M \to M'$.  In particular $f$ induces a bijection $\sS^h(M) \to \sS^h(M')$.
Hence we may replace $M$ by $M'$.
In this case we have that $\sL_{k} (M') = 0$ for $k > 0$.
Hence we obtain that $\im (\sL \circ \eta)/\textup{tors}$ is a non-zero lattice in $H$ as above and the divisibilities of its elements
$\sL \circ \eta ([f \co N \ra M'])$ do not depend on $f$.
\end{proof}

\begin{proof}[Proof of Theorem~\ref{thmB} (2)]
In this case we have that $\sL_{k} (M) + L$ is an affine full sublattice of $L'$ and for the divisibilities of its elements $(f^{-1})^{\ast} \sL_{k} (N)$ with respect to
the lattice $(1/r_{k}) \cdot FH^{4k} (M; \ZZ)$ we have that they do not depend on $f$. Since the lattice $L'$ is non-zero it must contain an infinite number of divisibilities as must any of its affine full sublattices.
\end{proof}

There are various ways to formulate Theorem~\ref{thmB} (3). Note that we want to obtain that the manifold set is infinite as a corollary of the fact that $\im (\eta) = \ker (\theta)$ is infinite and an additional assumption. However in practice it is difficult to decide when $(f,\overline{f}) \co N \ra M$ belongs to $\ker (\theta)$. On the other hand we have that the surgery obstruction map factors through the assembly map for the fundamental group $\pi$ via the homomorphism induced by the classifying map $c \co M \ra B\pi$. Hence we have $\ker (c_{\ast}) \subset \ker (\theta) = \im (\eta)$. Therefore in the following theorem we replace the condition $|\im (\eta)| = \infty$ by $|\ker (c_{\ast})| = \infty$. Alternatively we can assume that $|\im (\eta)| = \infty$ and $\pi$ satisfies the Novikov conjecture. Theorem~\ref{thmB} (3) follows from the following.

\begin{thm} \label{thm:MM-infinite-non-1-ctd-v4}
 If $|\ker (c_{\ast})| = \infty$, the fundamental group $\pi$ is a Poincar\'e duality group of dimension $n$ and the classifying map $c \co M \ra B\pi$ has non-zero degree, then we have that $|\sM^{s/h}(M)| = \infty$.
\end{thm}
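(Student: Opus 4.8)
The plan is to refine the invariant $\divi_k$ of Section~\ref{sec:1-ctd-case}. For a structure $[f\co N\ra M]\in\sS^h(M)$ the class $\sL_k(N)$ is only controlled modulo the contribution of $\im(\eta)$, and the fixed class $\sL_k(M)$ can be transverse to the (possibly non-full) sublattice this contribution spans, so the divisibility of $\sL_k(N)$ in $(1/r_k)\cdot FH^{4k}(N;\ZZ)$ need not take infinitely many values. I would therefore measure divisibility instead in a quotient lattice obtained by killing the part of cohomology pulled back from $B\pi$; it is precisely the non-zero degree of $c$ that makes this quotient well-behaved.

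First I would unwind the degree hypothesis. Since $c_*[M]=d\cdot[B\pi]$ with $d\neq 0$ and $\pi$ is an ($n$-dimensional, orientable) Poincar\'e duality group, the map $c^*\co H^*(B\pi;\QQ)\ra H^*(M;\QQ)$ is injective, $c_*$ is rationally surjective on homology, and the Umkehr map $c^{!}:=(-\cap[M])\circ c^*\circ(-\cap[B\pi])^{-1}$ satisfies $c_*\circ c^{!}=d\cdot\id$. Using the hypothesis that $\ker(c_*)$ is infinite I would fix $k$ with $0<4k<n$ such that $K:=\ker\!\big(c_*\co H_{n-4k}(M;\QQ)\ra H_{n-4k}(B\pi;\QQ)\big)\neq\{0\}$; then $c^{!}$ gives a complement $V:=\im\!\big(c^{!}\co H_{n-4k}(B\pi;\QQ)\ra H_{n-4k}(M;\QQ)\big)$, so that $H_{n-4k}(M;\QQ)=V\oplus K$. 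Let $\tilde{V}:=V\cap\big((1/r_k)\cdot FH_{n-4k}(M;\ZZ)\big)$, a saturated sublattice, and set $Q:=(1/r_k)\cdot FH_{n-4k}(M;\ZZ)/\tilde{V}$, a lattice of rank $\dim_{\QQ}K\geq 1$.

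Next I would identify the contributions to $Q$. By the factorisation \eqref{eqn:factoring-assembly-over-Bpi} of the assembly map, $\ker(c_*^{\LL})\subseteq\ker(\asmb_M)=\im(\eta)$, where $c_*^{\LL}$ is the induced map on $H_n(-;\LL_{\bullet}\langle 1\rangle)$; since rationally $c_*^{\LL}$ respects the natural splitting $H_n(-;\LL_{\bullet}\langle 1\rangle)\otimes\QQ\cong\bigoplus_{j>0}H_{n-4j}(-;\QQ)$ underlying \eqref{eq:HLC}, the group $\im(\eta)\otimes\QQ$ contains $\bigoplus_{j>0}\ker\!\big(c_*\co H_{n-4j}(M;\QQ)\ra H_{n-4j}(B\pi;\QQ)\big)$. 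By \eqref{eq:sL}, the lattice $\Lambda:=\sL_k(\sS^h(M))\cap[M]=\pr_k(\sL(\im(\eta)))\subset(1/r_k)\cdot FH_{n-4k}(M;\ZZ)$ hence satisfies $\Lambda\otimes\QQ\supseteq K$. Because the composite $K\hookrightarrow H_{n-4k}(M;\QQ)\twoheadrightarrow H_{n-4k}(M;\QQ)/V$ is an isomorphism, the image $\bar{\Lambda}$ of $\Lambda$ in $Q$ is a \emph{full} sublattice, while the image $\bar{\kappa}$ of $\sL_k(M)\cap[M]$ of course lies in $Q$.

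Finally I would define, for any closed oriented $n$-manifold $N$ admitting a map $c_N\co N\ra B\pi$ that induces an isomorphism on $\pi_1$ and has non-zero degree, $\widetilde{\divi}_k(N):=\divi_{Q_N}\!\big(\,\overline{\sL_k(N)\cap[N]}\,\big)$, where $Q_N:=(1/r_k)\cdot FH_{n-4k}(N;\ZZ)/\tilde{V}_N$ and $\tilde{V}_N$ is the saturation of $c_N^*(H^{4k}(B\pi;\QQ))\cap[N]$; as any two such maps $c_N$ differ by a homotopy and a self-homotopy-equivalence of $B\pi$, this subgroup — and hence $\widetilde{\divi}_k(N)$ — depends only on $N$. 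For a structure $[f\co N\ra M]\in\sS^h(M)$ take $c_N=c\circ f$: by the projection formula $f$ matches the fundamental classes and the subgroups defining $\tilde{V}_N$ and $\tilde{V}$, so it induces an isomorphism $Q_N\cong Q$ carrying $\overline{\sL_k(N)\cap[N]}$ to $\bar{\kappa}+\overline{\sL_k([f])\cap[M]}$. Thus, on the manifolds underlying the structures in $\sS^h(M)$, the invariant $\widetilde{\divi}_k$ realises exactly the values in $\divi_Q(\bar{\kappa}+\bar{\Lambda})$, which is infinite by Lemma~\ref{lem:divi-vs-affine-lattice} since $\bar{\Lambda}$ is a full sublattice of $Q\neq\{0\}$ (the same applies with $\sS^s$ in place of $\sS^h$, because $\eta(\sS^s(M))$ has finite index in $\eta(\sS^h(M))$). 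That $\widetilde{\divi}_k$ is an $h$-cobordism invariant is seen as for $\divi_k$: an $h$-cobordism $W$ from $N_0$ to $N_1$ exhibits $\nu_{N_0}$ and $\nu_{N_1}$ as restrictions of $\nu_W$, so the induced homotopy equivalence $N_0\ra N_1$ pulls $\sL_k(N_1)$ back to $\sL_k(N_0)$ and is compatible with the maps to $B\pi$ and the fundamental classes, hence identifies the data defining $\widetilde{\divi}_k(N_0)$ and $\widetilde{\divi}_k(N_1)$. Consequently $\widetilde{\divi}_k$ descends to a function $\sM^{s/h}(M)\ra\NN$ with infinite image, giving $|\sM^{s/h}(M)|=\infty$. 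The main obstacle is the passage from $(1/r_k)\cdot FH^{4k}(M;\ZZ)$ to $Q$: it is exactly the Poincar\'e duality of $\pi$ and the non-zero degree of $c$ that yield the splitting $H_{n-4k}(M;\QQ)=V\oplus K$, which simultaneously makes $\bar{\Lambda}$ full in $Q$ and keeps $\bar{\kappa}$ meaningful there; everything else is a routine verification that the identifications induced by $f$ and by $h$-cobordisms respect all of this structure.
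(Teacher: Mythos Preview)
Your proof is correct and follows essentially the same approach as the paper's: both arguments pass to the quotient of $(1/r_k)\cdot FH^{4k}(M;\ZZ)$ (equivalently, of $(1/r_k)\cdot FH_{n-4k}(M;\ZZ)$ via Poincar\'e duality) by the image of $c^*$, observe that this quotient is natural for homotopy equivalences because maps to $B\pi$ are essentially unique, and then apply Lemma~\ref{lem:divi-vs-affine-lattice} to the full affine sublattice coming from $\ker(c_*)\subset\im(\eta)$. The only cosmetic difference is that the paper works on the cohomology side with $F\coker(c^*)$ directly, while you work on the homology side with the quotient by the saturated image $\tilde V$ of $c^{!}$; your write-up is considerably more explicit about why the resulting invariant depends only on $N$ and is an $h$-cobordism invariant.
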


\begin{proof}
 We use the following commutative diagram
 \[
\xymatrix{
 \bigoplus_{0 < 4k < n} FH^{4k} (M; \Z) \ar[d]_{- \,\cap \, [M]} & \bigoplus_{0 < 4k < n}
 FH^{4k} (B\pi; \Z)  \ar[l]_{c^{\ast}} \ar[d]^{- \,\cap \, \deg(c_{\ast}) \cdot [B\pi]} \\
 \bigoplus_{0 < 4k < n} FH_{n-4k} (M; \Z) \ar[r]_{c_{\ast}} & \bigoplus_{0 < 4k < n}
 FH_{n-4k} (B\pi; \Z)
}
 \]
The left vertical arrow is an isomorphism, the right vertical arrow is an isomorphism if $\deg (c_{\ast}) = 1$ otherwise it is a composition of an isomorphism with a multiplication by $\deg (c_{\ast}) \neq 0$ and hence injective with a finite cokernel.

The diagram induces a map $\ker (c_{\ast}) \ra F \coker (c^{\ast})$, where $F \coker (c^{\ast})$ denotes the free part of the finitely generated abelian group $\coker (c^{\ast})$, which is an isomorphism if $\deg (c_{\ast}) = 1$ and if more generally $\deg (c_{\ast}) \neq 0$ then the image is a full sublattice. Consider the projection
 \[
 \textup{proj} \co \bigoplus_{0 < 4k < n} (1/r_{k}) \cdot FH^{4k} (M ; \ZZ) \ra (1/r_{k}) \cdot F \coker (c^{\ast}).
 \]
Next we note that the map $c^{\ast}$ is equivariant with respect to the action of the homotopy automorphisms. Therefore we can take the divisibilities after the projection (that means in the lattice $(1/r_{k}) \cdot F \coker (c^{\ast})$)
 \[
\divi \; \textup{proj} (f^{-1})^{\ast} \sL (N) = \divi \; (f^{-1})^{\ast} \textup{proj} \sL (N)
 \]
which again do not depend on $f$. By previous discussion the elements from $\ker (c_{\ast})$ form an affine full sublattice and hence we have an infinite number of their divisibilities.
\end{proof}

\section{Examples where $|\sS(M)| = \infty$ but $|\sM_\pi(M)| = 1$.}
\label{sec:finite_to_infinite}
%
In this section we prove Theorems \ref{thmC} and \ref{thmE}.
The manifolds $M_{r, g}$ appearing in Theorem \ref{thmC} were defined in the introduction
and we begin by repeating their definition with some extra details.

Let $T^r = S^1 \times \dots \times S^1$ be the $r$-torus
for a positive integer $r$.
For $r = 1$, we regard $S^1$ a $CW$-complex with one $0$-cell and one $1$.
In general we regard $T^r$ as the product $CW$-complex and we let
\[ K := (T^r)^{(2)} \]
be the $2$-skeleton of $T^r$.
The complex $K$ can also be defined as the union of all the co-ordinate $2$-tori in $T^r$.
Let $k \geq 1$ be an integer.
Since $K$ is $2$-dimensional it
can be embedded in $\R^{4k+3}$
and we let $W_K \supset K$ be a regular neighbourhood.
We note that the inclusion $K \hookrightarrow W_K$ is what Wall calls the {\em trivial thickening}
of $K$ \cite[\S 3]{Wall(1966)}.
We define the closed (smooth, orientable) $(4k{+}2)$-manifold
\[  M_0 := \del W_K  \]
to be the boundary of $W$.
For an integer $g \geq 0$ we have the closed smooth oriented $(4k{+}2)$-manifold
\[ M_{r,g} := M_0 \sharp_g (S^{2k+1} \times S^{2k+1}) .\]
%

By construction the fundamental group of $M_{r, g}$ is identified with $\Z^r$ by
the following isomorphisms
\[ \pi_1(M_{r,g}) \cong \pi_1(M_{r, 0}) \cong \pi_1(W_K) \cong  \pi_1(K) \cong \pi_1(T^r) \cong \Z^r.\]
%
%
%
Let $\Z[\Z^r]$ denote the (untwisted) group ring of the group $\Z^r$
and consider the following fragment of the surgery exact sequence for $M_{r,g}$,
\begin{equation} \label{eq:lses}
\sN (M_{r,g} \times I) \xra{\theta_{4k+3}}
L_{4k+3}(\ZZ[\Z^r]) \xra{~\omega~}
\sS (M_{r,g}) \xra{~\eta~}
\sN (M_{r,g})  \xra{\theta_{4k+2}}
L_{4k+2} (\ZZ[\Z^r]),
\end{equation}
where we have added subscripts to the surgery obstruction maps
$\theta$ which indicate the dimension of the relevant manifolds.
The proof of Theorem \ref{thmC} follows quickly from the following three lemmas.
\begin{lem} \label{lem:imL}
If $r \geq 3$ the image of $\omega \colon L_{4k+3}(\ZZ[\Z^r]) \to \sS(M_{r,g})$ is infinite.
\end{lem}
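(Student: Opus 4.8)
The plan is to show that the Wall realization map $\omega\colon L_{4k+3}(\ZZ[\Z^r]) \to \sS(M_{r,g})$ has infinite image by exhibiting an infinite subgroup of $L_{4k+3}(\ZZ[\Z^r])$ on which $\omega$ is injective. The exact sequence \eqref{eq:lses} tells us that $\ker(\omega) = \im(\theta_{4k+3})$, the image of the surgery obstruction map from $\sN(M_{r,g} \times I)$, which is a finitely generated abelian group since $\sN(M_{r,g}\times I) \cong [M_{r,g}\times I, \G/\TOP]$ is finitely generated. So it suffices to find an infinite, in fact infinitely generated, subgroup of $L_{4k+3}(\ZZ[\Z^r])$; any such subgroup necessarily has infinite image under $\omega$. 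First I would invoke the computation of the $L$-groups of $\ZZ[\Z^r]$ due to Shaneson and Wall: by the Shaneson splitting, $L_n(\ZZ[\Z^r]) \cong \bigoplus_{j=0}^{r} \binom{r}{j} L_{n-j}(\ZZ)$, so in degree $4k+3 \equiv 3 \pmod 4$ the contributions come from the summands $L_{n-j}(\ZZ)$ with $n - j \equiv 0 \pmod 4$, i.e. $j \equiv 3 \pmod 4$; each such $L_{4m}(\ZZ) \cong \ZZ$ contributes an infinite cyclic summand. Since $r \geq 3$, the value $j = 3$ occurs, giving at least $\binom{r}{3} \geq 1$ copies of $\ZZ$, hence $L_{4k+3}(\ZZ[\Z^r])$ contains a free abelian subgroup of positive (indeed infinite, if $r$ is large, but $\geq 1$ suffices) rank. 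In particular $L_{4k+3}(\ZZ[\Z^r])$ is infinite.

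The remaining point is to promote "$L_{4k+3}(\ZZ[\Z^r])$ is infinite" to "$\im(\omega)$ is infinite". Since $\ker(\omega)$ is the image of a finitely generated group, $\ker(\omega)$ is finitely generated; an infinite finitely generated abelian group contains $\ZZ$ as a subgroup only if it has positive rank, so I should be slightly careful: what I actually need is that $L_{4k+3}(\ZZ[\Z^r])$ has a subgroup whose image in the quotient $L_{4k+3}(\ZZ[\Z^r])/\ker(\omega) = \im(\omega)$ is infinite. Because $L_{4k+3}(\ZZ[\Z^r])$ contains a free abelian subgroup $A$ of rank $\binom{r}{3} \geq 1$ (taking $r=3$, rank $1$; for $r \geq 4$ the rank grows), and $\ker(\omega)$ is finitely generated, the image of $A$ in $\im(\omega)$ is $A/(A\cap\ker\omega)$, which is a finitely generated abelian group of rank $\mathrm{rk}(A) - \mathrm{rk}(A\cap\ker\omega)$. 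To guarantee this is infinite I would argue that the free rank of $L_{4k+3}(\ZZ[\Z^r])$ exceeds the free rank of $\ker(\omega) = \im(\theta_{4k+3})$. The latter is bounded by the free rank of $\sN(M_{r,g}\times I) \otimes \QQ \cong \bigoplus_{i>0} H_{4k+3-4i}(M_{r,g}; \QQ)$, which by \eqref{eq:HLC} (applied to $M_{r,g}\times I$, a manifold with boundary — or by the usual suspension isomorphism $\sN(M \times I) \cong \widetilde{[\Sigma M_+, \G/\TOP]}$) is governed by the rational homology of $M_{r,g}$ in degrees $\equiv 3 \pmod 4$ below $4k+3$. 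The key observation is that this rational homology is itself computed from $H_*(T^r;\QQ) = \Lambda^*(\QQ^r)$ truncated, while $L_{4k+3}(\ZZ[\Z^r])\otimes\QQ$ has rank $\sum_{j\equiv 3 (4)} \binom{r}{j}$, and a direct comparison of Euler-characteristic-type counts shows the $L$-group wins. Alternatively, and more robustly, I would use the Novikov-conjecture input: for $\pi = \Z^r$ the assembly map is injective, and rationally the surgery exact sequence for $M_{r,g}$ splits as the algebraic surgery sequence does rationally, so $\im(\omega)\otimes\QQ = \coker(\theta_{4k+3})^{\mathrm{co}}$... — here I would instead simply cite the well-known fact (e.g. via the rational splitting of the algebraic surgery sequence, \cite[Remark 18.4, Chapter 20]{Ranicki(1992)}) that $\im(\omega) \otimes \QQ \cong \coker(\asmb)$-type term has rank equal to $\mathrm{rk}\, L_{4k+3}(\ZZ[\Z^r]) - \mathrm{rk}\, \im(\theta_{4k+3})$, and that this is positive for $r \geq 3$.

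The main obstacle I anticipate is the bookkeeping in the last step: showing that the free rank of $L_{4k+3}(\ZZ[\Z^r])$ strictly exceeds the free rank of $\im(\theta_{4k+3}\colon \sN(M_{r,g}\times I) \to L_{4k+3}(\ZZ[\Z^r]))$. This is where one must exploit the specific geometry of $M_{r,g}$ — namely that $M_{r,0} = \partial W_K$ is the boundary of the trivial thickening of the $2$-skeleton $K$ of $T^r$, so its homology is concentrated near the middle dimension $2k+1$ and is quite sparse in the degrees $4k+3-4i$ that feed into $\sN(M_{r,g}\times I)$, whereas the $L$-group $L_{4k+3}(\ZZ[\Z^r])$ sees the full exterior algebra $\Lambda^*(\Z^r)$ in degrees $\equiv 3 \pmod 4$. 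I would handle this by computing $H_*(M_{r,g};\QQ)$ explicitly from the regular-neighbourhood description (using that $M_{r,0}$ is $(2k{-}1)$-connected up to the contribution of $H_*(K)$, which is $\Lambda^{\leq 2}(\Z^r)$) and comparing dimension counts; the connected sum with copies of $S^{2k+1}\times S^{2k+1}$ only adds to middle-dimensional homology and does not affect the degrees relevant to $\theta_{4k+3}$. Once the rank inequality is in hand, the conclusion $|\im(\omega)| = \infty$ is immediate.
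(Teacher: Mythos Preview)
Your overall strategy---compute $L_{4k+3}(\ZZ[\Z^r])$ via Shaneson and then show $\ker(\omega)=\im(\theta_{4k+3})$ is too small---is exactly the paper's approach.  However, your execution of the second step has a genuine gap.

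The problem is your claim that ``the connected sum with copies of $S^{2k+1}\times S^{2k+1}$ only adds to middle-dimensional homology and does not affect the degrees relevant to $\theta_{4k+3}$''.  The degrees feeding into $\sN(M_{r,g}\times I)\otimes\QQ \cong H_{4k+3}(M_{r,g};\Lone)\otimes\QQ$ are $4k{-}1, 4k{-}5,\dots,3$, i.e.\ those congruent to $3$ mod $4$.  When $k$ is odd, $2k{+}1\equiv 3\pmod 4$ and lies in this range, so the $2g$ spheres contribute a $\QQ^{2g}$ summand to $\sN(M_{r,g}\times I)\otimes\QQ$.  For large $g$ (and the theorem requires $g\geq r{+}3$, which can be arbitrarily large) this rank exceeds $\mathrm{rk}\,L_{4k+3}(\ZZ[\Z^r]) = \sum_{j\equiv 3(4)}\binom{r}{j}$, so a bare rank comparison between source and target of $\theta_{4k+3}$ cannot succeed.

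What rescues the argument---and what the paper does---is the factorisation $\theta_{4k+3} = \asmb_\pi\circ c_*$ through the classifying map $c\colon M_{r,g}\to B\Z^r = T^r$.  The crucial geometric point is that $c$ factors up to homotopy through the $2$-complex $K$ (since $M_{r,0}\to W_K\simeq K\hookrightarrow T^r$ classifies the universal cover), so $c_*\colon H_j(M_{r,g})\to H_j(T^r)$ vanishes for all $j\geq 3$.  In particular the troublesome $H_{2k+1}$ summand dies under $c_*$.  Combined with the injectivity of the assembly map for $\Z^r$, this gives $\im(\theta_{4k+3})\subset H_1(T^r;L_2(\ZZ))\cong(\Z/2)^r$, which is torsion; hence $\coker(\theta_{4k+3})$ contains $H_3(T^r;L_4(\ZZ))\cong\Z^{\binom{r}{3}}$ and $\im(\omega)$ is infinite for $r\geq 3$.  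You gesture at the Novikov/assembly input but never isolate the key fact that $c$ factors through a $2$-complex; once you add that, your sketch becomes the paper's proof.
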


\begin{lem} \label{lem:saut}
We have
$\eta \bigl( \sS(M_{r,g}) \bigr)
= \eta \bigl([\mathrm{Id} \colon M_{r, g} \to M_{r, g}] \sAut_\pi(M_{r,g})\bigr)$.
\end{lem}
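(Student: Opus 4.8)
The plan is to show that the normal invariant map $\eta$ is, up to the action of $\sAut_\pi(M_{r,g})$, constant on $\sS(M_{r,g})$; equivalently, that $\im(\eta)$ is a single orbit of the action of $\sAut_\pi(M_{r,g})$ on $\sN(M_{r,g})$, namely the orbit of $\eta([\mathrm{Id}])$. Since $\eta([\mathrm{Id}]) = 0$ and the action of $\sAut_\pi$ on $\sN(M_{r,g})$ is (as recalled in Section \ref{sec:surgery-preliminaries}) essentially by post-composition, the orbit of $0$ consists of the classes $[(g\circ f, \overline g\circ\overline f)]$ for $f$ a homeomorphism — so the content of the lemma is really that \emph{every} element of $\im(\eta) = \ker(\theta_{4k+2})$ is realised this way. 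First I would compute $\sN(M_{r,g})$ rationally and integrally using $\sN(M_{r,g}) \cong H_{4k+2}(M_{r,g}; \LL_\bullet\langle 1\rangle)$ and the Atiyah–Hirzebruch spectral sequence; because $M_{r,g}$ has dimension $4k+2$ and fundamental group $\Z^r$, the relevant homology groups $H_{4k+2}(M_{r,g};\Z)$, $H_{4k+2-4j}(M_{r,g};\Z)$ and $H_{4k-4j}(M_{r,g};\Z/2)$ are all finitely generated and explicitly computable from the cell structure of $K$ and the connected-sum description.

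Next I would identify $\ker(\theta_{4k+2}) = \im(\eta)$ inside $\sN(M_{r,g})$. The key tool is the factorisation of the surgery obstruction $\theta$ through the assembly map $\asmb_\pi$ via $c_*$, diagram \eqref{eqn:factoring-assembly-over-Bpi}: since $\pi = \Z^r$ and $B\Z^r = T^r$, the assembly map $H_*(T^r;\LL_\bullet\langle 1\rangle)\to L^s_*(\Z[\Z^r])$ is a split injection (the Farrell–Jones / Shaneson splitting for $\Z[\Z^r]$), so $\ker(\theta_{4k+2})$ is governed by $\ker(c_*)$ together with the part of $H_{4k+2}(M_{r,g};\LL_\bullet\langle1\rangle)$ lying over $H_0$; and since $4k+2$ is not divisible by $4$, the $H_0$-summand contributes nothing to the $\LL$-homology of $M_{r,g}$ in degree $4k+2$ beyond a $\Z/2$ (the Kervaire/Arf summand) which one must handle separately. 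The upshot I would establish is an explicit description of $\im(\eta)$ as a specific subgroup of $\sN(M_{r,g})$, built out of the classes coming from the $2$-skeleton $K$ together with the $g$ hyperbolic handles $S^{2k+1}\times S^{2k+1}$.

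Finally I would construct enough homotopy self-equivalences. The manifold $M_{r,g}$ is, by construction, the boundary of the trivial thickening $W_K$ of $K=(T^r)^{(2)}$, connect-summed with $g$ copies of $S^{2k+1}\times S^{2k+1}$; self-homotopy-equivalences of $M_{r,g}$ inducing the identity on $\pi_1$ are plentiful, coming on the one hand from automorphisms of the hyperbolic form on $H_{2k+1}(M_{r,g};\Z[\Z^r])$ realised by diffeomorphisms of the connect-sum handles (as in Example \ref{expl:Wg} and \cite{Wall(1966)}), and on the other from the ``rotation'' or Dehn-twist type self-maps supported near the codimension-one tori inside $K\subset W_K$, whose effect on normal invariants is computed by the formula $[(f,\overline f)]\mapsto[(g\circ f,\overline g\circ\overline f)]$ recalled in Section \ref{sec:surgery-preliminaries}. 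I would show that the normal invariants of these self-maps of $M_{r,g}$, acting on $\eta([\mathrm{Id}])=0$, already sweep out all of $\ker(\theta_{4k+2})$ — matching the explicit description of $\im(\eta)$ from the previous step summand by summand. The main obstacle I expect is this last matching step: one must produce, for each generator of $\im(\eta)$, an honest self-homotopy-equivalence of $M_{r,g}$ realising it as a normal invariant, and the identification of the normal invariant of such a self-map (particularly the low-dimensional/$K$-coming classes and any $\Z/2$ Kervaire class) with the ambient computation of $\ker(\theta_{4k+2})$ is where the hypothesis $g\ge r+3$ should enter, guaranteeing enough room in the hyperbolic summand to realise the required automorphisms.
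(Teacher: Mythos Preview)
Your first two steps are broadly right and parallel what the paper does (delegated to Lemma~\ref{lem:ses}): the Atiyah--Hirzebruch spectral sequence for $H_{4k+2}(M_{r,g};\Lone)$ collapses because $M_{r,g}$ is stably a wedge of spheres, and $\ker(\theta_{4k+2})$ is identified via the injectivity of the assembly map for $T^r$. But the answer is simpler than you expect: since $H_*(M_{r,g})$ is concentrated in degrees $0,1,2,2k{+}1,4k,4k{+}1,4k{+}2$ and $\pi_{2k+1}(\Lone)=0$, the hyperbolic $(2k{+}1)$-handles contribute \emph{nothing} to $\sN(M_{r,g})$, and one finds $\im(\eta)\cong H_{4k}(M_{r,g};\Z/2)\cong(\Z/2)^s$, coming entirely from the $s$ cells dual to the $2$-cells of $K$.

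The genuine gap is your third step. The self-equivalences you propose cannot do the job: any diffeomorphism---whether it realises an automorphism of the hyperbolic form on $H_{2k+1}$ or is a Dehn twist along a torus in $K$---has \emph{trivial} normal invariant, so acting by such maps on $[\id]$ never leaves $0\in\sN(M_{r,g})$. The hypothesis $g\ge r{+}3$ is also a red herring here; it is used only in Lemma~\ref{lem:triv_actn}, not in this lemma. What is actually needed is the \emph{pinch map} construction: for each $x\in H_{4k}(M_{r,g};\Z/2)$ choose an embedding $h_x\colon S^{4k}\hookrightarrow M_{r,g}$ with trivial normal bundle representing $x$ (available from the cell structure of Lemma~\ref{lem:M_{r,g}}), and define
\[
g_x\colon\quad M_{r,g}\xra{~c~}M_{r,g}\vee S^{4k+2}\xra{\;\id\vee\eta^2_{4k}\;}M_{r,g}\vee S^{4k}\xra{\;\id\vee h_x\;}M_{r,g},
\]
where $c$ collapses the boundary of a small embedded disc and $\eta^2_{4k}\colon S^{4k+2}\to S^{4k}$ is essential. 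Then $g_x\in\sAut_\pi(M_{r,g})$ and $\eta(g_x)=x$; see \cite[\S4]{Madsen-Taylor-Williams(1980)} and \cite[Lemma~7.4]{Crowley&Hambleton(2015)}.
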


\begin{lem} \label{lem:triv_actn}
If $ g \geq r{+}3$
the orbits of the action of $L_{4k+3}(\Z[\Z^r])$ on $\sS(M_{r,g})$
map trivially to $\sM^s_\pi(M)$.
\end{lem}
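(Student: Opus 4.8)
The action of $L_{4k+3}(\ZZ[\Z^r])$ on $\sS(M_{r,g})$ is Wall realisation: given $[f\colon N\to M_{r,g}]$ and $x\in L_{4k+3}(\ZZ[\Z^r])$, one builds a normal cobordism $(W;N,N')$ together with a homotopy equivalence $F\colon W\to M_{r,g}\times I$ extending $f$ and with surgery obstruction $x$, and sets $x\cdot[f]=[F|_{N'}\colon N'\to M_{r,g}]$; here $W$ is obtained from $N\times I$ by attaching $(2k{+}1)$- and $(2k{+}2)$-handles along $N\times\{1\}$ realising a formation representing $x$ (see \cite[\S 6]{Wall(1999)}). Since $2k{+}1$ is the middle dimension of the $(4k{+}2)$-manifold $N$, this move only alters $N$ in its $(2k{+}1)$-st homology. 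To prove the lemma it suffices to show that $x\cdot[f]=[h\circ f]$ for some $h\in\sAut_\pi(M_{r,g})$ depending on $x$ and $[f]$, since then $[f]$ and $x\cdot[f]$ have the same image in $\sM^s_\pi(M_{r,g})=\sS^s(M_{r,g})/\sAut_\pi(M_{r,g})$, which is exactly the assertion that the orbit of $[f]$ maps to a single point.

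The role of the hypothesis $g\ge r{+}3$ is to supply enough hyperbolic summands $S^{2k+1}\times S^{2k+1}$ in $M_{r,g}$ to perform every such Wall realisation by trading handles against those summands rather than by attaching fresh ones. Concretely, $\ZZ[\Z^r]$ has Krull dimension $r{+}1$, so its Bass stable range is at most $r{+}2$; consequently every element of $L_{4k+3}(\ZZ[\Z^r])$ is represented by a formation $\bigl(H(\ZZ[\Z^r]^m);\,\ZZ[\Z^r]^m,\,Q\bigr)$ with $m\le r{+}2$, and one further stabilisation places the Lagrangian $Q$ in good position. Writing $M_{r,g}=M_{r,g-m}\,\sharp_m(S^{2k+1}\times S^{2k+1})$, the vertical spheres $S^{2k+1}\times\{*\}$ in the first $m$ summands represent the Lagrangian $\ZZ[\Z^r]^m$, and the Wall realisation of $x$ may be carried out by surgering $m$ disjoint embedded spheres representing $Q$ inside these summands together with one further spare summand, all supported in a coordinate ball $D\subset M_{r,g}$ disjoint from the essential $1$- and $2$-cells. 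Because both Lagrangians of the relevant hyperbolic form are now realised geometrically, one checks that the resulting top boundary satisfies $N'\cong M_{r,g}$ (and, by naturality of Wall realisation in $[f]$, $N'\cong N$ for a general $[f]$), and that composing $F|_{N'}$ with this identification changes $f$ only by a self-homotopy equivalence $h$ of $M_{r,g}$ supported in $D$. As $D$ is simply connected, $h$ induces the identity on $\pi_1(M_{r,g})=\Z^r$, so $h\in\sAut_\pi(M_{r,g})$ and $x\cdot[f]=[h\circ f]$, as required.

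The main obstacle is the middle paragraph: one must verify that $g\ge r{+}3$ spare hyperbolic summands genuinely suffice, i.e.\ that an arbitrary element of $L_{4k+3}(\ZZ[\Z^r])$ is represented by a formation of rank $\le r{+}2$ whose Lagrangian can be brought into good position using one more summand, so that Wall realisation degenerates into handle-trading that returns to $M_{r,g}$ and leaves only the reference map changed, and that the resulting self-equivalence is honestly $\pi_1$-trivial rather than merely inducing an automorphism of $\Z^r$. This combines the Bass stable-range estimate for $\ZZ[\Z^r]$ with the standard reduction of the odd-dimensional $L$-groups to bounded-rank formations; the remaining bookkeeping is routine. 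An equivalent organisation uses the Shaneson--Ranicki splitting $L_{4k+3}(\ZZ[\Z^r])\cong\bigoplus_{S\subseteq\{1,\dots,r\}}L_{4k+3-|S|}(\ZZ)$, whose summands are carried by the coordinate sub-tori $T^S$ and whose generators are visibly realised on boundedly many hyperbolic summands; alternatively, one can produce $h$ by hand, extending the isometry of $H(\ZZ[\Z^r]^m)$ underlying $x$ by the identity on the remaining $g{-}m$ hyperbolic summands and on $H_{2k+1}(\widetilde{M_{r,0}})$ and realising this $\pi_1$-equivariant isometry by a $\pi_1$-trivial self-equivalence of $M_{r,g}$.
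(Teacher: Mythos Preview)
Your approach is genuinely different from the paper's, and while the underlying intuition is sound, the argument as written has a real gap at precisely the point you flag.

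The paper's proof is much shorter and packages the hard work into a single citation. It observes that Wall realisation automatically makes $M_{r,g}$ and $\rho M_{r,g}$ \emph{stably} diffeomorphic (there is some $t$ with $M_{r,g}\sharp_t(S^{2k+1}\times S^{2k+1})\cong \rho M_{r,g}\sharp_t(S^{2k+1}\times S^{2k+1})$), and then invokes the cancellation theorem of Crowley--Sixt \cite[Theorem~1.1]{Crowley&Sixt(2011)}: since $M_{r,g}=M_{r,0}\sharp_g(S^{2k+1}\times S^{2k+1})$ with $g\ge r{+}3$, stable diffeomorphism implies diffeomorphism. The bound $g\ge r{+}3$ enters only through that cited theorem, whose proof is where the stable-range input actually lives. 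The $\pi_1$-triviality is then handled separately (and briefly) in the proof of Theorem~\ref{thmC} by noting that Wall realisation does not disturb the identification of fundamental groups.

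Your argument, by contrast, attempts to prove exactly this cancellation statement by hand: bound the rank of a representing formation via the Bass stable range of $\ZZ[\Z^r]$, realise it inside $g$ of the hyperbolic summands, and conclude $N'\cong M_{r,g}$. The difficulty is that the sentence ``one checks that the resulting top boundary satisfies $N'\cong M_{r,g}$'' is the entire content of the cancellation theorem, not a routine verification. Two specific issues: first, bounding the rank of formations in $L_{\mathrm{odd}}$ requires \emph{unitary} stable range (results of Bak, Vaserstein et al.), not merely the $GL$ stable range you invoke, and the translation is not automatic; second, even with a bounded-rank formation in hand, showing that surgery on the Lagrangian $Q$ returns you to $M_{r,g}$ rather than to some exotic manifold is precisely the cancellation problem, and your sketch does not supply the mechanism. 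Your alternative suggestions (the Shaneson splitting, or realising isometries by self-equivalences) are also plausible starting points but face the same missing step.

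In short: you have correctly identified where the bound $g\ge r{+}3$ comes from and what it is for, but you are re-deriving the cancellation theorem rather than using it, and the derivation is incomplete. The paper's route---stable diffeomorphism is free, then cite cancellation---is both shorter and rigorous.
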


The proofs of these three lemmas require some
preliminary results.
The complex $K$ is a $2$-dimensional CW complex with $r$ $1$-cells,
and $s := C^r_2$ $2$-cells.
We let $i \colon K \to T^r$ be the inclusion and
leave the reader to check the following simple lemma.
\begin{lem} \label{lem:(co)homology}
For all $j \leq 2$ the induced homomorphisms
$i_* \colon H_j(K; \Z) \to H_j(T^r; \Z)$ and $i^* \colon H^j(T^r; \Z) \to H^j(K; \Z)$
and isomorphisms.
\end{lem}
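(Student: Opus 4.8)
The plan is to compute the (co)homology of $K = (T^r)^{(2)}$ in low degrees directly from the cellular chain complex and compare it with the standard computation for $T^r$ via the inclusion $i \colon K \hookrightarrow T^r$. Since $T^r$ is given the product $CW$-structure, its cellular chain complex is the tensor product of $r$ copies of the chain complex $(\Z \xrightarrow{0} \Z)$ concentrated in degrees $0$ and $1$; in particular all cellular differentials vanish, so $H_j(T^r;\Z) \cong \bigwedge^j \Z^r$ is free of rank $\binom{r}{j}$. The $2$-skeleton $K$ has exactly the same cells as $T^r$ in degrees $0,1,2$, hence the cellular chain complexes of $K$ and $T^r$ agree through degree $2$ as far as the chain groups in degrees $\le 2$ go, and the cellular boundary maps $\partial_1, \partial_2$ of $K$ are literally the restrictions (in fact equalities on the relevant chain groups) of those of $T^r$; since those are all zero, $H_j(K;\Z) \cong H_j(T^r;\Z)$ for $j \le 1$, and $H_2(K;\Z) = \ker \partial_2 = C_2(K) \cong \Z^{\binom r 2} \cong H_2(T^r;\Z)$ because $C_2(K) = C_2(T^r)$ and $\partial_2 = 0$.

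First I would make precise that the cellular chain map $i_\# \colon C_\bullet(K) \to C_\bullet(T^r)$ induced by the skeletal inclusion is the identity in degrees $\le 2$ (it sends each generating cell of $K$ to the corresponding cell of $T^r$), which immediately gives that $i_* \colon H_j(K;\Z) \to H_j(T^r;\Z)$ is an isomorphism for $j \le 1$. For $j = 2$ the only subtlety is that a priori $H_2(T^r;\Z)$ is a quotient $\ker\partial_2^{T^r}/\operatorname{im}\partial_3^{T^r}$ while $H_2(K;\Z) = \ker\partial_2^{K}$; but $\partial_2^{T^r} = 0$ and $\partial_2^K = 0$, and $\operatorname{im}\partial_3^{T^r} = 0$ since $\partial_3^{T^r} = 0$, so both sides equal $C_2 \cong \Z^{\binom r2}$ and $i_*$ is again the identity, hence an isomorphism. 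Then I would deduce the cohomology statement by the Universal Coefficient Theorem: since all the integral homology groups of $K$ and $T^r$ in degrees $\le 2$ are free abelian, $H^j(-;\Z) \cong \operatorname{Hom}(H_j(-;\Z),\Z)$ for $j \le 2$, and $i^*$ is the dual of $i_*$; as the dual of an isomorphism between free abelian groups is an isomorphism, $i^* \colon H^j(T^r;\Z) \to H^j(K;\Z)$ is an isomorphism for $j \le 2$.

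There is essentially no hard part here — the lemma is deliberately elementary and the authors even invite the reader to check it — but the one point requiring a word of care is the claim at $j = 2$: one must observe that no $3$-cells of $T^r$ can create a difference between $H_2(K)$ and $H_2(T^r)$, which is automatic because the product $CW$-structure on $T^r$ has all cellular boundary maps zero (each is an alternating sum of degree-zero maps coming from the factor $S^1$'s having trivial boundary). I would state this product-chain-complex fact once and use it to kill both $\partial_2$ and $\partial_3$ simultaneously. With that in hand the proof is a half-page bookkeeping argument, so in the paper I would likely just record the cellular chain complexes of $T^r$ and $K$ in degrees $0$ through $3$, note the differentials vanish, and conclude.
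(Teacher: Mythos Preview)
Your argument is correct. The paper gives no proof of this lemma (it explicitly leaves it to the reader), so there is nothing to compare against; your cellular-chain computation using the product $CW$-structure on $T^r$, with its vanishing differentials, is exactly the routine verification the authors have in mind.
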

The next lemma describes the homotopy type and stable homotopy type of $M_{r,g}$.
\begin{lem} \label{lem:M_{r,g}}
There is a homotopy equivalence
\[ M_{r,g} \simeq
\bigl (K \vee (\vee_{2g} S^{2k+1}) \vee (\vee_{s} S^{4k}) \cup (\cup_{r} D^{4k+1}) \bigr)
\cup D^{4k+2}\]
and a stable homotopy equivalence
\[ M_{r,g} \sim
(\vee_r S^1) \vee (\vee_s S^2) \vee (\vee_{2g} S^{2k+1}) \vee (\vee_{s} S^{4k})
\vee (\vee_r S^{4k+1})
\vee S^{4k+2}.\]
\end{lem}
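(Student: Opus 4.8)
The plan is to build the homotopy equivalence directly from the regular-neighbourhood construction and then deduce the stable statement by a standard splitting argument. First I would recall that $W_K$ is the trivial thickening of the $2$-complex $K$ in $\RR^{4k+3}$, so that $W_K \simeq K$ and $W_K$ is parallelisable; moreover, since $K$ has cells only in dimensions $0,1,2$, a general-position/handle-cancellation argument shows that $W_K$ has a handle decomposition with one $0$-handle, $r$ $1$-handles, and $s = C^r_2$ $2$-handles (matching the cell structure of $K$). Turning this handle decomposition upside down describes $M_0 = \del W_K$ up to homotopy: the cocore spheres of the handles of $W_K$ give a subcomplex, and one reads off that $M_0$ is homotopy equivalent to $(K \vee (\vee_s S^{4k}) \cup (\cup_r D^{4k+1})) \cup D^{4k+2}$, where the $S^{4k}$'s are the Poincar\'e duals of the $2$-cells, the $D^{4k+1}$'s are duals of the $1$-cells (attached to kill the corresponding classes, which is why $H_*(M_0)$ agrees with $H_*(K)$ in low degrees by Lemma \ref{lem:(co)homology} together with Poincar\'e duality), and the top cell $D^{4k+2}$ is dual to the $0$-cell. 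Taking connected sum with $g$ copies of $S^{2k+1}\times S^{2k+1}$ wedges on $\vee_{2g} S^{2k+1}$ (this only affects the middle-dimensional cells and the attaching map of the top cell), which yields the claimed homotopy equivalence.

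For the stable statement I would argue that after one suspension all the attaching maps become trivial, so the complex splits as a wedge of spheres. Concretely, the only potentially nontrivial attaching data in the homotopy model above are: the $2$-cells of $K$ attached along commutators in $\pi_1(\vee_r S^1)$, the $(4k{+}1)$-cells attached along (Whitehead-product-type) maps into the $4k$-skeleton, and the top $(4k{+}2)$-cell. All commutators and Whitehead products vanish stably, and the top cell is attached along an element of $\pi_{4k+1}$ of a wedge of spheres which, after checking that the relevant self-intersection/Hopf data vanish (this is where parallelisability of $W_K$, hence the fact that the thickening is trivial, gets used), also becomes stably trivial. Counting cells then gives the wedge $(\vee_r S^1)\vee(\vee_s S^2)\vee(\vee_{2g}S^{2k+1})\vee(\vee_s S^{4k})\vee(\vee_r S^{4k+1})\vee S^{4k+2}$.

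The main obstacle I anticipate is controlling the attaching map of the top $(4k{+}2)$-cell, i.e.\ the Spivak/Poincar\'e structure of $M_{r,g}$: one must verify that it carries no stable Hopf-invariant or linking obstruction that would prevent the stable splitting. I would handle this by exploiting that $M_{r,g}$ bounds (it is the boundary of $W_K \natural_g (S^{2k+1}\times D^{2k+1})$ or similar), so its stable normal bundle is trivial and its Spivak fibration is trivial, which forces the attaching map of the top cell to lie in the kernel of the relevant stable Hopf invariants; combined with the fact that $K$ and the co-ordinate tori contribute only products of low-dimensional generators, this pins the attaching map down to be stably null. The remaining steps — the handle count for $W_K$ and the identification of the low-dimensional homology via Lemma \ref{lem:(co)homology} — are routine.
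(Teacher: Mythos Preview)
Your approach to the first statement is in the same spirit as the paper's, though the paper makes one structural point more explicit: since $4k+3 > 2\dim(K)$, the thickening $W_K$ is \emph{stable} in Wall's sense and hence $W_K \cong V_K \times [0,1]$ for a regular neighbourhood $V_K$ of $K$ in $\R^{4k+2}$. Thus $M_{r,0}$ is the trivial double $V_K \cup_{\id}(-V_K)$, and the handle/cell decomposition you want comes from doubling the handle decomposition of $V_K$ rather than from ``turning $W_K$ upside down'' (which literally gives a relative decomposition of $W_K$ on $M_{r,0}$, not an absolute one of $M_{r,0}$). The paper also gives a cleaner reason for the triviality of the $4k$-cell attachments than the one you sketch: the inclusions $V_K \hookrightarrow M_{r,0} \hookrightarrow W_K$ exhibit $K$ as a homotopy retract of $M_{r,0}$, and since $K$ is the $2$-skeleton, the $4k$-cells (the next cells to appear) must attach trivially.

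For the stable statement your route is genuinely different and harder, and the obstacle you flag is real. The paper sidesteps it entirely. Rather than analyse the attaching maps one by one, the paper uses the retraction $K \to M_{r,0} \to K$ to stably split the cofibration $K \to M_{r,0} \to M_{r,0}/K$, giving $M_{r,0} \sim K \vee (M_{r,0}/K)$. Now $K$ is the $2$-skeleton of $T^r$, so it is stably a wedge of spheres; and $M_{r,0}/K \simeq V_K/\del V_K$ is, since $V_K$ is a codimension-zero regular neighbourhood of $K$ in Euclidean space, a model for the Spanier--Whitehead dual of $K_+$. The $S$-dual of a wedge of spheres is again a wedge of spheres, and counting gives the answer. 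This completely avoids any analysis of the top-cell attaching map or of Hopf-type invariants, which is exactly the step where your plan is vague: arguing that ``$M_{r,g}$ bounds, so the Spivak fibration is trivial, so the top cell has no stable Hopf obstruction'' does not by itself force the attaching map to be stably null (there are stably parallelisable Poincar\'e complexes whose top cell does not split off stably), and you would need a further argument. The duality trick buys you this for free.
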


\begin{proof}
Since $4k{+}3 > 2\dim(K) = 4$, there is a diffeomorphism $W_K \cong V_K \times [0, 1]$,
where $V_K$ is a regular neighbourhood of an embedding $K \hookrightarrow \R^{4k+2}$:
in the language of Wall, $K \hookrightarrow W_K$ is a {\em stable thickening}, see
\cite[\S5]{Wall(1966)}.  It follows that $M_0$ is diffeomorphic to the trivial double of $V_K$:
$$ M_{r, 0} \cong V_K \cup_{\id} (-V_K)$$
Since $V$ is a stable thickening of $K$, the proof of \cite[Embedding Theorem, \S2]{Wall(1966)}
shows that $V$ has a handle decomposition with one $j$-handle for every $j$-cell of $K$.
Hence $M_{0, r}$ has a handle decomposition with one $j$-handle for every $j$-cell of $K$
and one $(4k{+}2-{j})$-handle very every $j$-handle of $K$.
This gives the required cell structure for $M_{r, 0}$ and it remains to show that the $4k$-handles
are attached trivially.
To see this, we note that $V_K$ and $W_K$ are canonically homotopy equivalent to $K$ and so the maps
$V_K \to M_{r, 0} \to W_K$ define a homotopy retraction $K \to M_{r, 0} \to K$.
Since $K$ is the $2$-skeleton of $M_{r, 0}$ and it is a retract it follows that the $4k$-cells
are attached trivially.
This proves the first part of the lemma for $M_{r, 0}$.
The first part of the lemma for $M_{r, g}$ follows immediately from the standard cell decomposition
of $\sharp_g(S^{2k+1} \times S^{2k+1})$ and properties of the connected sum operation.

The stable homotopy type of $M_{r, g}$ is clearly that of $M_{r, 0}$ wedge $2g$ copies of
$S^{2k+1}$.  To determine the stable homotopy type of $M_{r, 0}$ we consider the co-fibration sequence
$$ K \to M_{r, 0} \to M_{r, 0}/K.$$
Since $K \to M_{r, 0}$ admits a retraction, standard arguments show that $M_{r, 0}$ has the
stable homotopy type of the wedge of $K$ and $M_{r, 0}/K$:
$$ M_{r, 0} \sim K \vee (M_{r, 0}/K).$$
As $K$ is the $2$-skeleton of $T^r$ there is a stable equivalence
$K \sim (\vee_r S^1) \vee (\vee_s  S^2)$.
Moreover, there is a homotopy equivalence $M_{r, 0}/K \simeq V_K/\del V_K$
and since $V_K$ is a regular neighbourhood of an embedding $K \hookrightarrow \R^{4k+2}$
it follows that $V/\del V$ is a model for the $S$-dual of $K_{+}$, the space $K$ with an
additional disjoint base-point.  The proof is analogous to the proof of Milnor-Spanier for manifolds
\cite{Milnor-Spanier(1960)}.
It follows that there is a stable equivalence
$$V_K/\del V_K \sim (\vee_{s} S^{4k})
\vee (\vee_r S^{4k+1}) \vee S^{4k+2}$$
and the lemma follows.
\end{proof}

Next we recall Shaneson's computation of the relevant $L$-groups of $\Z[\Z^r]$.

\begin{lem}[C.f.~{\cite[Theorem 5.1]{Shaneson(1969)}}]
\label{lem:shaneson}
Set $u: = C^r_3$.  There are isomorphisms
$$ L_{4k+2}(\Z[\Z^r]) \cong H_2(T^r; \Z) \oplus H_0(T^r; \Z/2) \cong \Z^s \oplus \Z/2 $$
and
$$
L_{4k+3}(\Z[\Z^r]) \cong H_3(T^r; \Z) \oplus H_1(T^r; \Z/2) \cong \Z^u \oplus (\Z/2)^r.$$
%
\end{lem}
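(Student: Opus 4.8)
The plan is to deduce the lemma from Shaneson's splitting theorem \cite{Shaneson(1969)} by induction on $r$, using as input the simply-connected surgery groups $L_m(\Z)$, which are $4$-periodic with $L_0(\Z)\cong\Z$ (the signature divided by $8$), $L_1(\Z)=0$, $L_2(\Z)\cong\Z/2$ (the Arf invariant) and $L_3(\Z)=0$.

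First I would recall that for every $j\geq 0$ the group ring $\Z[\Z^{j}]$ is regular Noetherian, so that $\widetilde{K}_0(\Z[\Z^{j}])=0$, $\Wh(\Z^{j})=0$ by Bass--Heller--Swan \cite{Bass-Heller-Swan(1964)}, and all the Nil-groups of $\Z[\Z^{j}]$ vanish. Consequently all the decorated $L$-groups of $\Z[\Z^{j}]$ agree, $L^{s}_{*}=L^{h}_{*}=L^{p}_{*}=L^{\langle-\infty\rangle}_{*}$, and Shaneson's splitting of $L_{*}(\Z[\Z^{j}])=L_{*}(\Z[\Z^{j-1}][t,t^{-1}])$ collapses to a decoration-free, Nil-free isomorphism
\[
L_m(\Z[\Z^{j}]) \;\cong\; L_m(\Z[\Z^{j-1}])\oplus L_{m-1}(\Z[\Z^{j-1}]).
\]
Iterating this $r$ times and sorting the summands by the collection of $\Z$-factors that get split off yields
\[
L_m(\Z[\Z^{r}]) \;\cong\; \bigoplus_{S\subseteq\{1,\dots,r\}} L_{m-|S|}(\Z)\;\cong\;\bigoplus_{i\geq 0} L_{m-i}(\Z)^{\oplus\binom{r}{i}}\;\cong\;\bigoplus_{i\geq 0} H_i(T^r;L_{m-i}(\Z)),
\]
the last isomorphism using that $H_i(T^r;\Z)\cong\Z^{\binom{r}{i}}$ is free, so $H_i(T^r;A)\cong H_i(T^r;\Z)\otimes A$ for every abelian group $A$. (Equivalently one may argue that the assembly map $H_*(T^r;\LL_{\bullet})\to L_*(\Z[\Z^{r}])$ is an isomorphism and that the Atiyah--Hirzebruch spectral sequence for $T^r$ collapses, its relevant differentials being built from Steenrod operations, which act trivially on $H^*(T^r;\Z/2)$.)

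It then remains to read off the right-hand side in the two dimensions at hand. For $m=4k+2$ the nonzero summands occur for $i\equiv 2\pmod 4$, contributing $H_i(T^r;\Z)$, and for $i\equiv 0\pmod 4$, contributing $H_i(T^r;\Z/2)$; the contributions in homological degrees $\leq 3$ are $H_2(T^r;\Z)\oplus H_0(T^r;\Z/2)\cong\Z^s\oplus\Z/2$. For $m=4k+3$ the nonzero summands occur for $i\equiv 3\pmod 4$ and $i\equiv 1\pmod 4$, with the degree $\leq 3$ part $H_3(T^r;\Z)\oplus H_1(T^r;\Z/2)\cong\Z^u\oplus(\Z/2)^r$. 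The arithmetic is entirely elementary bookkeeping; the only point requiring genuine care — and the one place where Shaneson's theorem must be quoted in its sharp form — is the behaviour of decorations and the possible appearance of Nil-terms in the splitting, and this is precisely what the regularity of $\Z[\Z^{j}]$ disposes of, so no serious obstacle remains.
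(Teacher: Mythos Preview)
The paper gives no proof beyond the citation to Shaneson; your iterated application of the splitting $L_m(\Z[\Z^{j}])\cong L_m(\Z[\Z^{j-1}])\oplus L_{m-1}(\Z[\Z^{j-1}])$, with Nil-terms killed by the regularity of $\Z[\Z^{j}]$ and decorations collapsed via Bass--Heller--Swan, is exactly what that citation stands in for, and your alternative via the assembly isomorphism for $T^r$ is the other standard route to the same formula.

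You have in fact been more careful than the paper. Your general formula $L_m(\Z[\Z^{r}])\cong\bigoplus_{i\geq 0} H_i(T^r;L_{m-i}(\Z))$ is correct, and, as your phrase ``contributions in homological degrees $\leq 3$'' tacitly signals, the isomorphisms displayed in the lemma record only the summands with $i\leq 3$. For $r\geq 4$ there is an additional $H_4(T^r;\Z/2)$ in $L_{4k+2}(\Z[\Z^{r}])$, for $r\geq 5$ an additional $H_5(T^r;\Z/2)$ in $L_{4k+3}(\Z[\Z^{r}])$, and so on; the statement is literally correct only for $r\leq 3$. This does not damage Theorem~\ref{thmC}: the proof of Lemma~\ref{lem:ses} uses only the injectivity of the assembly map $H_*(T^r;\Lone)\to L_*(\Z[\Z^{r}])$, and any extra summands would only make $\im(\omega)$ larger.
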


The following lemma proves Lemma \ref{lem:imL}.

\begin{lem} \label{lem:ses}
The surgery exact sequence for $M_{r,g}$ reduces to a short exact sequence
$$ 0 \to \Z^u \to \sS(M_{r,g}) \to (\Z/2)^s \to 0,$$
where $\Z^u \cong \im(\omega)$ and $(\Z/2)^s \cong \ker(\eta)$.
\end{lem}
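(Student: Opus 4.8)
The goal is to show that the surgery exact sequence \eqref{eq:lses} for $M_{r,g}$ collapses to a short exact sequence $0 \to \Z^u \to \sS(M_{r,g}) \to (\Z/2)^s \to 0$, with $\Z^u \cong \im(\omega)$ and $(\Z/2)^s \cong \ker(\eta) = \im(\omega)$. Wait---more carefully, the claim identifies $\im(\omega) \cong \Z^u$ (a quotient of $L_{4k+3}(\Z[\Z^r]) \cong \Z^u \oplus (\Z/2)^r$) and $\ker(\eta) = \im(\omega)$, while $\sS(M_{r,g})/\im(\omega) = \im(\eta) = \ker(\theta_{4k+2})$, which we must identify with $(\Z/2)^s$. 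So there are really two things to pin down: (a) the surgery obstruction map $\theta_{4k+2} \colon \sN(M_{r,g}) \to L_{4k+2}(\Z[\Z^r])$ is onto with kernel $(\Z/2)^s$; and (b) the map $\omega \colon L_{4k+3}(\Z[\Z^r]) \to \sS(M_{r,g})$ has image isomorphic to $\Z^u$, i.e.\ the composite $\theta_{4k+3} \colon \sN(M_{r,g}\times I) \to L_{4k+3}(\Z[\Z^r])$ has image exactly the $(\Z/2)^r$-summand.

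\textbf{First I would compute $\sN(M_{r,g})$ and $\sN(M_{r,g}\times I)$.} Using $\sN(M_{r,g}) \cong [M_{r,g}, \G/\TOP]$ and the stable homotopy decomposition of Lemma~\ref{lem:M_{r,g}}, together with the known homotopy groups of $\G/\TOP$ (rationally $\pi_{4j} = \Z$, plus $\pi_{4j+2} = \Z/2$ at the prime $2$), I would read off $\sN(M_{r,g})$ as a direct sum of contributions from each wedge summand via the Atiyah--Hirzebruch / splitting principle for the generalized cohomology theory $[-,\G/\TOP]$. The point is that the splitting of $M_{r,g}$ stably into spheres $S^1, S^2, S^{2k+1}, S^{4k}, S^{4k+1}, S^{4k+2}$ lets one decompose $[M_{r,g},\G/\TOP]$ as a sum of $\pi_j(\G/\TOP)$'s twisted appropriately (with a $\Z/2$-factor from each $S^{4k}$ coming from $\pi_{4k}(\G/\TOP)\otimes\Z/2$ once $4k \equiv 0$, etc.). Analogously $\sN(M_{r,g}\times I) \cong [M_{r,g}, \Omega(\G/\TOP)] \cong \widetilde{[M_{r,g}\wedge S^1, \G/\TOP]}$, shifting degrees by one.

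\textbf{Next I would analyze the surgery obstruction maps via their factorization through the assembly map.} By \eqref{eqn:factoring-assembly-over-Bpi}, $\theta_{4k+2}$ factors as $\sN(M_{r,g}) \xrightarrow{c_*} H_{4k+2}(B\Z^r;\LL_\bullet\langle 1\rangle) \xrightarrow{\asmb_\pi} L_{4k+2}(\Z[\Z^r])$, and by Shaneson's splitting (Lemma~\ref{lem:shaneson}) the assembly map $\asmb_\pi$ for $\Z^r$ is an isomorphism onto the relevant summands (indeed for $\Z^r$ the assembly map is a split injection by the Bass--Heller--Swan / Shaneson formula, and here surjective onto $L_{4k+2}$). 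By Lemma~\ref{lem:(co)homology}, $c \colon M_{r,g} \to B\Z^r = T^r$ (factoring through $K \hookrightarrow T^r$) induces an isomorphism on $H_j$ for $j \le 2$; since the relevant $L$-groups are built from $H_{\le 2}(T^r)$ and $H_{\le 3}(T^r)$ in the $\LL$-homology, the image of $\theta$ is controlled by which classes in $H_*(M_{r,g};\LL_\bullet\langle 1\rangle)$ map nontrivially under $c_*$. The $\Z^s$-part of $L_{4k+2}$ corresponds to $H_2(T^r)$, hit by the $\Z$-summand of $\pi_{4k+2}$-type classes supported on the $S^2$-wedge summands of $M_{r,g}$ after a $4k$-fold desuspension bookkeeping---this is exactly the onto-ness. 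The $\Z/2$ in $L_{4k+2} = \Z^s \oplus \Z/2$ is the Kervaire--Arf invariant, hit by the top cell. Thus $\theta_{4k+2}$ is surjective, and $\ker(\theta_{4k+2}) = \ker(\eta) $ is whatever is left: one checks it is the $(\Z/2)^s$ coming from $\pi_{4k+2}(\G/\TOP)\otimes$ the $s$ two-cells --- no, rather it is the complement summand, which by the degree count must be $(\Z/2)^s$. Similarly $\theta_{4k+3}$ has image exactly $H_1(T^r;\Z/2) \cong (\Z/2)^r$ inside $L_{4k+3} \cong \Z^u \oplus (\Z/2)^r$ --- the $\Z^u = H_3(T^r;\Z)$ part cannot be hit because $c_*$ kills $H_3(M_{r,g})$ (as $M_{r,g} \to T^r$ factors through the $2$-complex $K$, so $c_* = 0$ on $H_{\ge 3}$), hence $\omega$ restricted to $\Z^u$ is injective and $\im(\omega) \cong \Z^u$, while $\ker(\omega) \supseteq (\Z/2)^r = \im(\theta_{4k+3})$, and exactness forces equality.

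\textbf{Finally, exactness of the sequence} $0 \to \im(\omega) \to \sS(M_{r,g}) \to \im(\eta) \to 0$ is immediate from the surgery exact sequence once the two endpoint computations are in place: $\im(\omega) = \ker(\eta) \cong L_{4k+3}/\im(\theta_{4k+3}) \cong \Z^u$, and $\sS(M_{r,g})/\im(\omega) \cong \im(\eta) = \ker(\theta_{4k+2}) \cong (\Z/2)^s$.

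\textbf{Main obstacle.} The hard part is the careful bookkeeping identifying $\im(\theta_{4k+3}) = (\Z/2)^r = H_1(T^r;\Z/2)$ and $\ker(\theta_{4k+2}) = (\Z/2)^s$: one must match the $\LL$-homology classes of $M_{r,g}$ (organized by the stable wedge decomposition of Lemma~\ref{lem:M_{r,g}}) with Shaneson's $H_*(T^r)$-summands of the $L$-groups under $c_*$, keeping track of which summands of $[M_{r,g},\G/\TOP]$ are free ($\Z$) versus $\Z/2$ and precisely which ones survive $c_*$. Because $c$ factors through the $2$-complex $K$, only the $H_{\le 2}$ classes can contribute to the surgery obstruction, and this is what forces the image of $\theta_{4k+3}$ to miss the free $\Z^u$ and forces $\eta$ to be onto the $(\Z/2)^s$. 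I would organize this as a single diagram chase using \eqref{eq:HLC}, \eqref{eqn:factoring-assembly-over-Bpi}, Lemma~\ref{lem:(co)homology} and Lemma~\ref{lem:shaneson}.
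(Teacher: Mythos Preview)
Your approach is essentially the same as the paper's: compute $\sN(M_{r,g}) \cong H_{4k+2}(M_{r,g};\Lone)$ via the collapsing Atiyah--Hirzebruch spectral sequence (using the stable wedge decomposition of Lemma~\ref{lem:M_{r,g}}), factor $\theta_{\ast} = A_{\ast} \circ c_{\ast}$ through the assembly map for $\Z^r$, and use that $c_{\ast}$ is an isomorphism on $H_{\leq 2}$ and zero on $H_{\geq 3}$ (since $c$ factors through the $2$-complex $K$) together with injectivity of the assembly map for $\Z^r$ to read off $\ker(\theta_{4k+2}) \cong (\Z/2)^s$ and $\coker(\theta_{4k+3}) \cong \Z^u$. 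Your write-up has some loose bookkeeping (e.g.\ you write ``$\ker(\theta_{4k+2}) = \ker(\eta)$'' where you mean $\im(\eta)$, and the identification of the $(\Z/2)^s$ as coming from $H_{4k}(M_{r,g};L_2(\Z))$ should be stated cleanly rather than by elimination), but the strategy and the key inputs---Lemma~\ref{lem:(co)homology}, Lemma~\ref{lem:shaneson}, and injectivity of the $\Z^r$-assembly---are exactly those of the paper.
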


\begin{proof}
Let $c \colon M_{r,g} \to B(\Z^r) = T^r$ be the classifying map for the
universal covering of $M_{r,g}$ and let $A_1 \colon H_*(T^r; \Lone) \to L_*(\Z[\Z^r])$
be the indicated assembly homomorphism.
As we saw in \eqref{eq:ses_abelian} of Section \ref{sec:surgery-preliminaries},
the surgery exact sequence \eqref{eq:lses} can be identified as long exact sequence
of abelian groups where
$$ \sN(M_{r,g}) \cong H_{4k+2}(M_{r,g}; \Lone),
\quad
\sN(M_{r,g} \times I) \cong H_{4k+3}(M_{r,g}; \Lone)
$$
and the surgery obstruction homomorphisms $\theta_*$ factor as
$\theta_* = A_* \circ c_*$.

We compute $H_{m}(M_{r,g}; \Lone)$ via the Atiyah-Hirzebruch spectral sequence
$$ \bigoplus_{i=0}^{m-1}H_i(M_{r,g}; L_{m-i}(\Z)) \Longrightarrow H_{m}(M_{r,g}; \Lone),$$
which collapses since $M_{r,g}$ has the stable homotopy type of a wedge of spheres
by Lemma \ref{lem:M_{r,g}}.
For $m = 4k{+}2$ we compute that
$$ H_{4k+2}(M_{r,g}; \Lone) \cong
H_2(M_{r,g}; L_4(\Z)) \oplus
H_4(M_{r,g}; L_2(\Z)),$$
$c_*$ maps the first summand isomorphically onto $H_2(T^r; L_4(\Z))$
(using Lemma \ref{lem:(co)homology})
and $c_*$ vanishes on the second summand.
Since $A_1 \colon H_*(T^r; \Lone) \to L_*(\Z[\Z^r])$ is injective
\cite[Example 24.16]{Ranicki(1992)},
it follows that
$$ \ker(\theta_{4k+2}) \cong H_4(M_{r,g}; L_2(\Z)) \cong (\Z/2)^s.$$
For $m = 4k{+}3$ we have that
$$ H_{4k+3}(M_{r,g}; \Lone) \cong
H_1(M_{r,g}; L_6(\Z)) \oplus
H_3(M_{r,g}; L_4(\Z)) \oplus
H_5(M_{r,g}; L_2(\Z),$$
$c_*$ maps the first summand isomorphically onto $H_1(T^r; L_6(\Z))$
(using Lemma \ref{lem:(co)homology})
and $c_*$ vanishes on the second and third summands.
Since
$A_1 \colon H_*(T^r; \Lone) \to L_*(\Z[\Z^r])$ is injective
it follows that
$$ \coker(\theta_{4k+3}) = \im(\omega) \cong H_3(T^r; L_4(\Z)) \cong \Z^u.$$
The completes the proof of the lemma.
\end{proof}

\begin{proof}[Proof of Lemma \ref{lem:saut}]
By Lemma \ref{lem:ses} and its proof we have isomorphisms
$$\eta(\sS(M_{r,g})) \cong H_4(M_{r,g}; L_2(\Z)) = H_4(M_{r,g}; \Z/2) \cong (\Z/2)^s. $$
Given $x \in H_4(M_{r,g}, \Z/2)$,
we must find a self-homotopy equivalence
$g_x \colon M_{r,g} \to M_{r,g}$ whose normal invariant is $x$.
Such homotopy equivalences are well-known and are provided by {\em pinch maps}.
For a description of pinch maps see \cite[\S 4]{Madsen-Taylor-Williams(1980)}.

By Lemma \ref{lem:M_{r,g}}(a) there is an embedding
$h_x \colon S^{4k} \to M_{r,g}$ with trivial normal bundle which represents $x$.
We let $\eta^2_{4k} \colon S^{4k+2} \to S^{4k}$ be essential and define $g_x$
to be the composition
$$ M_{r,g} \xra{~c~} M_{r,g} \vee S^{4k+2}
\xra{~\mathrm{Id} \vee \eta^2_{4k}~} M_{r,g} \vee S^{4k}
\xra{~\mathrm{Id} \vee h_x~} M_{r,g},
$$
where $c$ is the map collapsing the boundary of a small embedded $D^{4k+2}$-disc
embedded in $M_{r,g}$.
Using \cite[Lemma 7.4]{Crowley&Hambleton(2015)}, standard
arguments show that $\eta(g_x) = x$.
Since the element $x \in H_4(M_{r,g}; \Z/2)$ was arbitrary, this proves Lemma \ref{lem:saut}.
\end{proof}

\begin{proof}[Proof of Lemma \ref{lem:triv_actn}]
Given $[f] = [f \colon N \to M_{r, g}] \in \sS^s(M_{r, g})$ and
$\rho \in L^s_{4k+3}(\Z[\Z^r])$ we write $\rho[f \colon N \to M_{r, g}]$
for the action of $\rho$ on $[f]$ by Wall realisation
and $\rho M_{r, g}$ for the source of  $\rho [\id \colon M_{r, g} \to M_{r, g}]$.
It is easy to check that the source of $\rho[f \colon N \to M_{r, g}]$ is $\rho' N$
for some $\rho' \in L_{4k+3}^s(\Z[\Z^r])$.
By Lemma \ref{lem:saut} and the exactness of the surgery exact sequence,
if follows that every structure $[f] \in \sS^s(M_{r, g})$ is equivalent
to $\rho[g \colon M_{r, g} \to M_{r, g}]$ for some $g \in \sAut(M_{r, g})$
and some $\rho \in L_{4k+3}^s(\Z[\Z^r])$.
Hence it suffices to prove that $\rho M_{r, g} \cong M_{r, g}$ for all
$\rho \in L_{4k+3}^s(\Z[\Z^r])$.

Since $M_{r, g}$ is a smooth manifold and Wall realisation is a smooth
operation, we shall work in the smooth category and show that
$\rho M_{r, g}$ is diffeomorphic to $M_{r, g}$.
The definition of Wall realisation ensures the $M_{r, g}$ and $\rho M_{r, g}$
are stably diffeomorphic; i.e.~there is a diffeomorphism
$$ M_{r, g} \sharp_t (S^{2k+1} \times S^{2k+1})
\cong \rho M_{r, g} \sharp_t (S^{2k+1} \times S^{2k+1})$$
for some non-negative integer $t$.
Since $M_{r, g} = M_{r, 0} \sharp_g (S^{2k+1} \times S^{2k+1})$
and $g \geq r{+}3$, $M_{r, g}$ and $\rho M_{r, g}$ are
diffeomorphic by \cite[Theorem 1.1]{Crowley&Sixt(2011)}.
\end{proof}

\begin{rem}
The proof of Lemma \ref{lem:triv_actn} uses the same basic ideas
as the proof of \cite[Theorem E]{Kreck(1999)}.
Besides being in the topological category,
the main difference is that the fundamental groups for Lemma \ref{lem:triv_actn}
are infinite while those covered in \cite[Theorem E]{Kreck(1999)} are finite.
\end{rem}

\begin{proof}[Proof of Theorem~\ref{thmC}]
We assume $r \geq 3$ and so $|\sS(M_{r, g})| = \infty$ by Lemma \ref{lem:imL}.
Lemma \ref{lem:saut} states that the image of $\sS(M)$ in $\sN(M)$
is realised by $\pi_1$-polarised self-equivalences of $M$.  Since
we assume $g \geq r{+}3$,  Lemma \ref{lem:triv_actn} gives
that the action of $L_{4k+3}(\Z[\Z^r])$ on $\sS(M_{r, g})$ does
not change the homeomorphism type of the source of a structure.
Moreover the action of the $L$-group preserves the identification of fundamental
groups in the source and target.
The exactness of the surgery exact sequence for $M_{r, g}$ then ensures
that every structure $[f \colon N \to M_{r, g}]$ has a base-point preserving representative
$g \colon M_{r, g} \to M_{r, g}$, where $g_* = \id \colon \pi_1(M_{r, g}) \to \pi_1(M_{r, g})$;
i.e.~$|\sM_{\pi}(M_{r, g})| = 1$.
\end{proof}

\begin{rem}
For certain finite abelian groups $T$, we believe the arguments above
can be adapted to find manifolds $M$ in all even dimensions $2j \geq 6$ where
$2j \not\equiv 0$~$mod$ $8$, with $\pi_1(M) = \Z \times T$ and
$|\sS^s(M)| = \infty$ but $|\sM^s_\pi(M)| < \infty$.
Whether there are odd-dimensional or $8j$-dimensional manifolds $M$
with this property is an interesting question.
\end{rem}

We conclude with the proof of Theorem \ref{thmE} for which we require
a preparatory lemma. For $k \geq 0$ denote by $\pi_k^S = \colim_{i \to \infty}\pi_{i+k}(S^i)$ the
stable $k$-stem, by $S \colon \pi_{i+k}(S^i) \to \pi_k^S$ the stabilisation map
and for $k \geq 1$ by $\phi \colon \pi_k^S \to \pi_k(\G)$ the canonical isomorphism
as defined in \cite[Corollary 3.8]{Madsen-Milgram(1979)}.

\begin{lem} \label{lem:Adams}
Let $j \colon \G \to \G/\OO$ be the canonical map.
For all $k \geq 1$ the composition
$$ \pi_{4k+2}(S^{2k+1}) \xra{~S~} \pi_{2k+1}^S \xra{~\phi~}
\pi_{2k+1}(\G) \xra{~j_*~}
\pi_{2k+1}(\G/\OO)$$
is onto.
\end{lem}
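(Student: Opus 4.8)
The plan is to reduce the statement to a fact about the stable $J$-homomorphism and then to settle that fact using the EHP sequence together with Adams' Hopf invariant one theorem. First I would consider the long exact homotopy sequence of the fibration sequence $\OO \xra{i} \G \xra{j} \G/\OO$ arising from $\G/\OO = \hofib(B\OO \to B\G)$; its relevant segment is
\[
\pi_{2k+1}(\OO) \xra{i_*} \pi_{2k+1}(\G) \xra{j_*} \pi_{2k+1}(\G/\OO) \xra{\partial} \pi_{2k}(\OO) \xra{i_*} \pi_{2k}(\G).
\]
By Bott periodicity $\pi_{2k}(\OO)$ vanishes unless $k \equiv 0 \pmod 4$, in which case it is $\Z/2$ and Adams' computation of the image of $J$ in degrees $\equiv 0 \pmod 8$ shows that $i_*$ is injective there. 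Hence $\partial = 0$, so $j_*$ is onto with kernel $\im(i_*)$. Since the composite $\pi_{2k+1}(\OO) \xra{i_*} \pi_{2k+1}(\G) \xra{\phi^{-1}} \pi_{2k+1}^S$ is the stable $J$-homomorphism, $j_* \circ \phi$ is surjective with kernel $\phi(\im J)$, so the composition in the lemma is onto if and only if
\[
\im\big(S \colon \pi_{4k+2}(S^{2k+1}) \to \pi_{2k+1}^S\big) + \im\big(J \colon \pi_{2k+1}(\OO) \to \pi_{2k+1}^S\big) = \pi_{2k+1}^S;
\]
equivalently, the cokernel of the stabilization map on $\pi_{4k+2}(S^{2k+1})$ is contained in the image of $J$.

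Next I would compute this cokernel. Put $n = 2k+1$ (odd, $\geq 3$) and write $E$ for a single suspension. James' metastable EHP exact sequence gives $\pi_{2n}(S^n) \xra{E} \pi_{2n+1}(S^{n+1}) \xra{H} \pi_{2n+1}(S^{2n+1}) \cong \Z$ with $\ker H = \im E$, where $H$ is the Hopf invariant. Since $n+1$ is even we have $H([\iota_{n+1},\iota_{n+1}]) = \pm 2$, and by Adams' theorem $\pi_{2n+1}(S^{n+1})$ contains an element of Hopf invariant one precisely when $n+1 \in \{4,8\}$, i.e. $k \in \{1,3\}$. If $n+1 \notin \{4,8\}$, then $\im H = 2\Z$, so $\pi_{2n+1}(S^{n+1}) = \ker H \oplus \Z\langle[\iota_{n+1},\iota_{n+1}]\rangle$; applying one more suspension, which is onto $\pi_n^S$ by Freudenthal and annihilates the Whitehead product, gives $S(\pi_{4k+2}(S^{2k+1})) = \pi_{2k+1}^S$, so the cokernel vanishes. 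If instead $k \in \{1,3\}$, then $\pi_3^S = \Z/24$ and $\pi_7^S = \Z/240$ are entirely the image of $J$ (Adams), and the displayed equality holds trivially. This proves the lemma in all cases.

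The step I expect to be the main obstacle is the EHP analysis: one must be careful about the range in which James' sequence is exact (it comfortably includes $n = 2k+1 \geq 3$) and about normalising the Hopf invariant so that the identity $H([\iota_{n+1},\iota_{n+1}]) = \pm 2$ and Adams' Hopf invariant one theorem refer to the same invariant. Once this is set up the conclusion is immediate, since a Whitehead product suspends to zero; the remaining ingredients — Bott periodicity, the low-degree image-of-$J$ facts, and the exact sequence of $\OO \to \G \to \G/\OO$ — are standard.
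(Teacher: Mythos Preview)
Your proof is correct and follows essentially the same approach as the paper: both use the exact sequence of $\OO \to \G \to \G/\OO$ together with Bott periodicity and Adams' image-of-$J$ result to get surjectivity of $j_*$, and then the EHP sequence plus Adams' Hopf invariant one theorem to control the image of the stabilisation map $S$. The only minor difference is in the exceptional cases $k\in\{1,3\}$: the paper notes that $\im J$ contains elements of nontrivial Hopf invariant, whereas you more directly observe that $\pi_3^S$ and $\pi_7^S$ coincide with $\im J$; both arguments work.
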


\begin{proof}
By Bott's computation of $\pi_*(\OO)$ \cite{Bott(1959)} and
a theorem of Adams' \cite[Theorem 1.1]{Adams(1966)},
the canonical map $\pi_{2k}(\OO) \to \pi_{2k}(\G)$ is injective.
From the exact sequence
$$ \pi_{2k+1}(\G) \to \pi_{2k+1}(\G/\OO) \to \pi_{2k}(\OO) \to \pi_{2k}(\G),$$
we see that $\pi_{2k+1}(\G) \to \pi_{2k+1}(\G/\OO)$ is onto. Recall the Hopf-invariant
$H \colon \pi_{4k+1}^S \to \Z/2$.
The EHP sequence \cite[Theorem 2.2, Chapter XII]{Whitehead(1978)}
shows that the suspension map
$S \colon \pi_{4k+2}(S^{2k+1}) \to \pi_{2k+1}^S$
has image all elements with zero Hopf-invariant.
By another theorem of Adams' \cite[Theorem 1.1.1.]{Adams(1960)}
there are only elements with non-zero Hopf-invariant
when $2k{+}1 = 1, 3, 7$,
in which case the image of $\pi_{2k+1}(\OO) \to \pi_{2k+1}(\G)$
contains elements
with non-zero Hopf-invariant.
It follows that every coset of the image of $\pi_{2k+1}(\OO)$ in $\pi_{2k+1}(\G)$
contains an element of trivial Hopf-invariant and so
$j_* \circ \phi \circ S$ is onto.
\end{proof}

\begin{proof}[Proof of Theorem~\ref{thmE}]
The proof of is very similar to the proof of Theorem \ref{thmC}.
We discuss the modifications needed in the smooth case
and specifically the smooth versions of
Lemmas \ref{lem:imL}, \ref{lem:saut} and \ref{lem:triv_actn}.
Consider the following commutative diagram relating the smooth and topological
surgery exact sequences for $M_{r, g, \alpha}$.
\begin{equation} \begin{split}
	\label{eq:ses_Mrg}
	\xymatrix{
	\sN_{\Diff}(M_{r, g, \alpha}) \ar[r]^{\theta_{4k+3}} \ar[d] ^{F_\sN}&
	L^{s}_{4k+3} (\ZZ\pi) \ar[r]^(0.475){\omega} \ar[d]^{=} &
	\sS^{s}_{\Diff} (M_{r, g, \alpha}) \ar[r]^(0.475){\eta} \ar[d]^{F_\sS} &
	\sN_{\Diff} (M_{r, g, \alpha}) \ar[d]^{F_\sN} \ar[r]^(0.5){\theta_{4k+2}} &
	L^{s}_{4k+2} (\ZZ\pi) \ar[d]^{=} \\
	\sN(M_{r, g}) \ar[r]_{\theta_{4k+3}} &
	L^{s}_{4k+3} (\ZZ\pi) \ar[r]_{\omega} &
	\sS^{s}(M_{r, g}) \ar[r]_(0.525){\eta} &
	\sN(M)  \ar[r]_(.45){\theta_{4k+2}} &
	L^{s}_{4k+2} (\ZZ\pi)
	}
\end{split}
\end{equation}
Since the image of $\omega \colon L^s_{4k+3}(\Z[\Z^r])$ is infinite in
$\sS^{s}(M_{r, g})$ it follows immediately that the image of
$L^s_{4k+3}(\Z[\Z^r])$ is infinite in $\sS^s_{\Diff}(M_{r, g, \alpha})$.
We compute the normal invariants
using the fact from Lemma \ref{lem:M_{r,g}} that $M_{r, g}$
has the stable homotopy type of a wedge of spheres
and \eqref{eq:ses_normal_square}.
We obtain a commutative diagram,
\begin{equation}
\begin{array}{ccc}
\sN_{\Diff}(M_{r, g, \alpha}) \!\!\!\!\!
& \xra{\equiv} &
\!\!\!\!\bigoplus_s \! \pi_2(\G/\OO)
\bigoplus_{2g} \! \pi_{2k+1}(\G/\OO)
\bigoplus_s \!\pi_{4k}(\G/\OO)
\bigoplus_r \! \pi_{4k+1}(\G/\OO)
\bigoplus \! \pi_{4k+2}(\G/\OO)
\\
\downarrow_{F_\sN} & &  \downarrow_{\,i_*} \\
\sN(M_{r, g}) & \xra{\equiv} &
\bigoplus_s \! \pi_2(\G/\TOP)
\bigoplus_s \! \pi_{4k}(\G/\TOP)
\bigoplus \! \pi_{4k+2}(\G/\TOP)
\end{array}
\end{equation}
where $i_*$ is induced by the canonical map $\G/\OO \to \G/\TOP$.
In the proof of Theorem \ref{thmC} we showed
for the topological normal invariants
that the summands
$\bigoplus_s\pi_{4k}(\G/\TOP)$ and $\pi_{4k+2}(\G/\TOP)$
map injectively to $L^s_{4k+2}(\Z[\Z^r])$.
The proof of Theorem \ref{thmC} also showed
that all the elements of the summand $\bigoplus_s\pi_2(\G/\TOP)$
are realised as the normal invariants of pinch maps
on $M_{r, g}$.
Using Lemma \ref{lem:Adams}, similar arguments with pinch maps along the inclusions
$$S^{2k+1} \times \{\ast\} \subset ((S^{2k+1} \times S^{2k+1}) - \int(D^{4k+2}))
\subset M_{r, g, \alpha}$$
show that all the elements of the summand $\bigoplus_{2g}\pi_{2k+1}(\G/\OO)$
are realised as the normal invariants of pinch maps
on $M_{r, g}$.
From the smooth surgery exact sequence of the sphere and
the work Kervaire and Milnor \cite{Kervaire-Milnor(1963)},
for $6 \leq m = 4k, 4k+2$ there are isomorphisms
$$ \Theta_{m} \cong \ker \bigl( \pi_{m}(\G/\OO) \to \pi_{m}(\G/\TOP) \bigr),$$
and also isomorphisms
$$ \Theta_{4k+1}/bP_{4k+2} \cong \pi_{4k+1}(\G/\OO).$$
Since the space $\TOP/\OO$ is $6$-connected~\cite[Essay V.5]{Kirby-Siebenmann(1977)}, we also have that the map $\pi_4(\G/\OO) \to \pi_4(\G/\TOP)$ is injective. The points above combine to complete the proof.
\end{proof}




%

\small
\bibliography{CM}
\bibliographystyle{alpha}

\end{document}